\title{The free group does not have the finite cover property}
\date{\today}
\author{Rizos Sklinos\thanks{This work has been conducted while the author was supported by a 
Golda Meir postdoctoral fellowship at the Hebrew University of Jerusalem.}}
\begin{document}

\maketitle

\begin{abstract}
We prove that the first order theory of nonabelian free groups  
eliminates the $\exists^{\infty}$-quantifier (in $eq$). Equivalently, since the theory of non abelian free groups is stable, 
it does not have the finite cover property.
\end{abstract}

\section{Introduction}
In \cite{SelaStability} Sela proved that the theory of any torsion-free hyperbolic group is stable. This astonishing result 
renewed the interest of model theorists in the first order theories of such groups. 
Other classes of groups for which the theory of any of their elements is known to be stable,  
is the class of abelian groups and the class of algebraic groups over algebraically closed fields. It is quite interesting 
that model theory has the power to study such diverse classes of groups under a common perspective, the perspective of 
stability.

In this paper we strengthen the above mentioned result. Our main theorem is:

\begin{thmIntro}
Let $\F$ be a nonabelian free group. Then $\mathcal{T}h(\F)^{eq}$ eliminates the $\exists^{\infty}$-quantifier.
\end{thmIntro}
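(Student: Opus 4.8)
The plan is to use that $\mathcal Th(\F)$ is stable (\cite{SelaStability}), so that ``$\mathcal Th(\F)^{eq}$ eliminates $\exists^\infty$'' is equivalent to ``$\mathcal Th(\F)$ does not have the finite cover property''; and by the standard theory of the finite cover property (due to Shelah), for a stable $T$ this holds as soon as, for every formula $\varphi(\bar x,\bar y)$ in the home sort, there is $N=N_\varphi\in\N$ bounding the size of every \emph{finite} set of the form $\varphi(\F,\bar b)$. Briefly, the reason this home-sort bound suffices: given such an $N$, a $\varphi$-fibre in $\F$ of size $\ge N$ is infinite, so for every $M$ the first-order sentence $\forall\bar y\,\bigl(\exists^{\ge N}\bar x\,\varphi\to\exists^{\ge M}\bar x\,\varphi\bigr)$ holds in $\F$ and hence in every model of $\mathcal Th(\F)$, which is exactly elimination of $\exists^\infty$ for $\varphi$; the passage to $eq$ is then the content of Shelah's theorem. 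Thus the whole problem reduces to a uniform bound on the size of finite definable subsets of $\F^{|\bar x|}$.

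To obtain the bound I would feed $\varphi$ into Sela's analysis of definable sets over a non-abelian free group. After replacing the universally quantified parts of $\varphi$ by formal (Merzlyakov-type) solutions, the family $\{\varphi(\F,\bar b)\}_{\bar b}$ is controlled by the graded Makanin--Razborov diagram of $\varphi$: there are finitely many graded limit groups, \emph{depending only on} $\varphi$, such that each $\varphi(\F,\bar b)$ is, up to bounded combinatorics relating $\bar x$ to generators of these groups, the set of specialisations lying on this finite diagram over the value $\bar b$ of the parameter subgroup. In this picture the potentially infinite pieces of a fibre come from the \emph{solid} limit groups in the diagram, and the point-like pieces from the \emph{rigid} ones, together with degenerate, strictly solid, specialisations.

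The core step is Sela's uniform finiteness theorem for rigid and strictly solid specialisations: for a rigid limit group $R$ relative to a parameter subgroup there is a bound $n_R$, \emph{depending only on $R$}, on the number of rigid specialisations of $R$ extending any fixed specialisation of the parameters, and similarly for the number of strictly solid families of a solid limit group. Granting this, if $\varphi(\F,\bar b)$ is finite then no genuine solid family can contribute to it, since a solid family over $\bar b$ yields infinitely many solutions and hence an infinite fibre; so every point of $\varphi(\F,\bar b)$ arises from a rigid, or degenerate strictly solid, specialisation of one of the finitely many limit groups of the diagram over $\bar b$, and summing the associated uniform bounds produces $N_\varphi$, independent of $\bar b$, as required.

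The main obstacle is exactly these last two steps: extracting from an arbitrary first-order $\varphi$ the finite list of governing limit groups \emph{with} the property that finite fibres are precisely the rigid/degenerate specialisations, and then applying the uniform bound on the number of rigid (resp.\ strictly solid) specialisations. This rests on the termination of Sela's iterated resolution procedure (sculpted and penetrated resolutions) and on his uniform finiteness results for rigid and solid limit groups, the technically heaviest inputs. A more hands-on alternative would be a contradiction argument through \emph{test sequences}: if there were $\bar b_m$ with $\varphi(\F,\bar b_m)$ finite but of size $\ge m$, one would try to extract from $m$ distinct solutions, as $m\to\infty$, a test sequence lying inside a single fibre --- through a limiting action on an $\R$-tree --- and then invoke Sela's criterion that a definable set containing a test sequence is infinite, contradicting finiteness; but the step ``sufficiently many solutions force a genuine test sequence'' again requires the finiteness of the governing graded diagram, so this route meets the same technical core.
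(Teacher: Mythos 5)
There is a genuine gap, and it is precisely at the step you dispose of in one sentence: the reduction of the $eq$-statement to a uniform bound on finite definable subsets of $\F^{|\bar x|}$ in the home sort. Theorem \ref{nfcp} (Shelah) says that for stable $T$, nfcp is \emph{equivalent} to elimination of $\exists^{\infty}$ in $T^{eq}$; it does not say that elimination of $\exists^{\infty}$ in the real sort implies nfcp. For a stable theory the finite cover property is witnessed by definable families of equivalence relations having finitely many but unboundedly many classes, and each such class may be infinite; such a witness is completely invisible to your bound $N_\varphi$ on finite sets of tuples. Since $\mathcal{T}h(\F)$ does \emph{not} eliminate imaginaries down to the real sort (conjugation, $m$-left-cosets and double cosets are genuine obstructions), you cannot bypass the imaginary sorts: a formula can have finitely many solutions up to conjugacy, or modulo $E_{2_m}$ or $E_{4_{m,n}}$, while every class involved is infinite, and nothing in your argument bounds the number of classes. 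So what you have sketched is only the real-sort statement (the paper's Theorem \ref{RealSort}), not the theorem as stated.

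This missing part is where most of the paper's work lies. Besides the real sort, the paper proves uniform bounds for the conjugacy sort and for the coset and double-coset sorts (Theorems \ref{ConjugacySort}, \ref{CosetSort}, \ref{DoubleCosetSort}), which require the Section \ref{CoreMaterial} analysis of test-sequence limit actions (Theorems \ref{UniConjSort}, \ref{BoundLeftCosets}, \ref{BoundDoubleCosets}, with the abelian-flat bookkeeping of Lemmata \ref{InfCentr}--\ref{DoubleCosets}); it then reduces an arbitrary $\emptyset$-definable equivalence relation to these basic sorts via Sela's weak elimination of imaginaries (Theorem \ref{Elim}) together with Lemma \ref{ImagSorts}, and only then invokes Theorem \ref{nfcp} to conclude nfcp as a corollary. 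Your home-sort argument itself is essentially the paper's: graded Diophantine envelope (Theorem \ref{GradedEnvelope}) plus the uniform bound on strictly solid families (Theorem \ref{BoundSolid}); but note that the claim ``a genuine solid family over $\bar b$ yields an infinite fibre'' is not an off-the-shelf citation --- it is the content of Proposition \ref{InfRealSort}/Theorem \ref{UniRealSort}, proved in the paper by analysing the limiting $\R$-tree actions flat by flat. To repair your proposal you must either prove the analogous infinitude/rigidity statements for the basic imaginary sorts and carry out the reduction via Theorem \ref{Elim}, or produce an argument that $\mathcal{T}h(\F)$ eliminates these imaginaries, which is false.
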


An immediate corollary, since the first order theory of non abelian free groups is stable, is the following:

\begin{corIntro}\label{MainTheorem}
The theory of nonabelian free groups does not have the finite cover property (nfcp).
\end{corIntro}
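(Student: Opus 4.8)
The plan is to obtain Corollary~\ref{MainTheorem} as a formal consequence of the Main Theorem, via the classical model-theoretic characterisation of the finite cover property for stable theories due to Shelah. Recall that characterisation: a complete first order theory $T$ \emph{does not} have the finite cover property if and only if $T$ is stable and $T^{eq}$ eliminates the $\exists^{\infty}$-quantifier, that is, for every $L^{eq}$-formula $\varphi(x,\bar y)$ there is a natural number $n_\varphi$ such that in every model of $T^{eq}$ and for every parameter tuple $\bar b$ the set $\{x : \varphi(x,\bar b)\}$ is either of cardinality at most $n_\varphi$ or infinite. (The equivalent phrasing in terms of definable equivalence relations --- every equivalence relation defined by an instance of a fixed formula has either boundedly many classes or infinitely many --- is exactly what forces one to work in $T^{eq}$ rather than in $T$: the bounded-versus-infinite dichotomy is about the cardinalities of imaginary sorts of classes, not merely about finite definable sets in the home sort.)

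Given this, the deduction is immediate. First, $\mathcal{T}h(\F)$ is stable: this is Sela's theorem recalled at the start of the introduction. Second, $\mathcal{T}h(\F)^{eq}$ eliminates the $\exists^{\infty}$-quantifier: this is precisely the content of the Main Theorem. Applying the characterisation above to $T=\mathcal{T}h(\F)$ yields that $\mathcal{T}h(\F)$ does not have the finite cover property, which is Corollary~\ref{MainTheorem}.

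I do not anticipate any genuine obstacle at this level: the entire mathematical content lies in proving the Main Theorem, and once $\exists^{\infty}$-elimination is available \emph{in $eq$}, nfcp follows by pure model theory. The one point deserving emphasis --- and the reason the Main Theorem is stated for $\mathcal{T}h(\F)^{eq}$ and not just for $\mathcal{T}h(\F)$ --- is that elimination of $\exists^{\infty}$ in the home sort alone is strictly weaker than nfcp, since a definable family of equivalence relations can have finitely but unboundedly many classes while every finite definable subset of the home sort stays boundedly finite. Finally, the same argument applies verbatim to any torsion-free hyperbolic group: stability of its theory is again due to Sela, so the statement "nfcp" follows from $\exists^{\infty}$-elimination in $eq$, which is what the extension under the paper's conditions establishes.
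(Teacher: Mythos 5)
Your deduction is correct and is exactly the paper's own argument: the corollary follows from Sela's stability theorem together with the stated Theorem~\ref{nfcp} (nfcp is equivalent, for stable $T$, to elimination of $\exists^{\infty}$ in $T^{eq}$), applied to the Main Theorem. Nothing further is needed.
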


As a matter of fact the two statements are equivalent assuming stability. 
For a discussion of these notions we refer the reader to section \ref{ModelTheory}.

The paper is structured as follows. Sections \ref{ModelTheory} and \ref{Geometry} serve as introductions to 
model theory and geometric group theory respectively. The purpose is to quickly introduce readers lacking some of 
these backgrounds to the notions and tools that we will use. Needless to say the treatment is by no means complete. 
The central notions in section \ref{ModelTheory} will be the model theoretic imaginaries and the finite cover property. 
In section \ref{Geometry} we will start by explaining Bass-Serre theory, which 
will be also useful for fixing the vocabulary for later use. Then we will pass to  
actions of groups on real trees. These notions will dominate the core of our proofs.

In section \ref{LimitGroups} we define limit groups using the Bestvina-Paulin method (as they were originally defined in \cite{Sel1}),  
a tool that will prove useful throughout the paper. We will continue with the presentation of solid limit groups and we will finish this 
section with the theorem of Sela bounding the number of {\em strictly solid families} of morphisms associated to a solid limit group. 
The previously mentioned theorem, which is Theorem \ref{BoundSolid} in our paper, lies behind our main idea.

In section \ref{TSFT} we present the notion of a {\em tower} and the notion of a {\em test sequence}. A tower 
can be thought of as a group constructed geometrically in a certain fashion, and a test sequence is a sequence of morphisms, 
defined on a tower, satisfying some combinatorial axioms.

Section \ref{CoreMaterial} is the core of our paper. Here is where all the technical results are proved. 
In brief, the idea is that we are able to say something about an element that lives in a tower provided 
that we know the behavior of its images under a test sequence for the tower.

In section \ref{EnvelopeGraded} we present the notion of the {\em Diophantine envelope}. A strong tool whose existence 
has been proved by Sela in \cite{SelaImaginaries}. Diophantine envelopes together with Theorem \ref{BoundSolid} 
are the main pylons of our proof. 

In section \ref{FCP} we bring everything together in order to prove the main results of this paper.

\section{Some model theory}\label{ModelTheory}
In this section we give the model theoretic background needed for the rest of the paper and we place our results into context. 
We will concentrate on the finite cover property and the notion of imaginaries. Since our result might be interesting for a wider audience, 
our exposition is meant to benefit the reader that has no prior knowledge of model theory.

\subsection{Imaginaries}
We fix a first order structure $\mathcal{M}$ and we are interested in the collection of definable sets in $\mathcal{M}$, i.e. all subsets 
of some Cartesian power of $M$ (the domain of $\mathcal{M}$) which are the solution sets of first order formulas (in $\mathcal{M}$). 
In some cases one can easily describe this collection usually thanks to some quantifier elimination result. 
For example, as algebraically closed fields admit (full) quantifier elimination (in the language of rings) 
all definable sets are boolean combinations of algebraic sets, i.e. sets defined by polynomial equations. 
On the other hand, although free groups admit quantifier elimination
down to boolean combinations of $\forall\exists$ formulas (see \cite{Sel5,Sel5bis}), the ``basic'' definable sets are not so easy to describe. 
 
Suppose $X$ is a definable set in $\mathcal{M}$. One might ask whether there is a canonical way to 
define $X$, i.e. is there a tuple $\bar{b}$ and a formula $\psi(\bar{x},\bar{y})$ such that $\psi(\mathcal{M},\bar{b})=X$ 
but for any other $\bar{b}'\neq \bar{b}$, $\psi(\mathcal{M},\bar{b}')\neq X$? 

To give a positive answer to the above mentioned question one has to move to a mild expansion of 
$\mathcal{M}$ called $\mathcal{M}^{eq}$. Very briefly $\mathcal{M}^{eq}$ is constructed from $\mathcal{M}$ 
by adding a new sort for each $\emptyset$-definable equivalence relation, $E(\bar{x},\bar{y})$, together 
with a class function $f_E:M^n\rightarrow S_E(M)$, where $S_E(M)$ (the domain of the new sort corresponding to $E$) 
is the set of all $E$-equivalence classes. The elements in these new sorts are called {\em imaginaries}. 
In $\mathcal{M}^{eq}$, it is not hard to see that one can assign to each definable set a canonical parameter in the sense discussed above. 
Let us also record the following useful lemma.

\begin{lemma}\label{ImagSorts}
Let $\phi(x_1,\ldots,x_l)$ be a first order formula in the expanded language $\mathcal{L}_{\mathcal{M}}^{eq}$, i.e. the 
language $\mathcal{L}_{\mathcal{M}}$ expanded with function symbols and variables corresponding to the imaginary sorts. 
For each $i\leq l$, suppose $x_i$ belongs to the sort corresponding to some $\emptyset$-definable equivalence relation $E_i$. 

Then there exists a first order $\mathcal{L}_{\mathcal{M}}$-formula $\psi(\bar{z}_1,\ldots,\bar{z}_l)$ such that 
$$\mathcal{M}^{eq}\models \forall\bar{z}_1,\ldots,\bar{z}_l(\phi(f_{E_1}(\bar{z}_1),\ldots,f_{E_l}(\bar{z}_l))\leftrightarrow 
\psi(\bar{z}_1,\ldots,\bar{z}_l))$$
\end{lemma}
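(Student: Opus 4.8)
The plan is to prove this by induction on the structure of the formula $\phi$, reducing the claim to the case of atomic formulas. The key point is that in $\mathcal{M}^{eq}$ the imaginary sorts are, by construction, just quotients of Cartesian powers of the home sort by $\emptyset$-definable equivalence relations, so every atomic statement about imaginary elements pulls back to a first order statement about their representatives in $\mathcal{M}$.

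\smallskip

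\noindent\textbf{Reduction to atomic formulas.} First I would handle the Boolean connectives and quantifiers. If $\phi = \neg\phi_1$ or $\phi = \phi_1 \wedge \phi_2$, and we have already found $\mathcal{L}_{\mathcal{M}}$-formulas $\psi_1, \psi_2$ equivalent (after substituting the $f_{E_i}$) to $\phi_1, \phi_2$, then $\neg\psi_1$, respectively $\psi_1 \wedge \psi_2$, work for $\phi$. For a quantifier $\phi(\bar x') = \exists x\, \phi_1(x, \bar x')$ where $x$ ranges over the sort of some $\emptyset$-definable equivalence relation $E$ on $M^k$: by the induction hypothesis there is an $\mathcal{L}_{\mathcal{M}}$-formula $\psi_1(\bar z, \bar z_1, \ldots, \bar z_l)$ (with $\bar z$ a $k$-tuple) equivalent to $\phi_1(f_E(\bar z), f_{E_1}(\bar z_1), \ldots)$. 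Since $f_E$ is surjective onto the new sort, $\exists x\, \phi_1(x, \ldots)$ holds iff $\exists \bar z\, \psi_1(\bar z, \ldots)$ holds, so $\psi(\bar z_1, \ldots, \bar z_l) := \exists \bar z\, \psi_1(\bar z, \bar z_1, \ldots, \bar z_l)$ is the required formula. Quantifiers over the home sort $M$ are immediate. This reduces everything to atomic $\phi$.

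\smallskip

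\noindent\textbf{The atomic case.} An atomic $\mathcal{L}_{\mathcal{M}}^{eq}$-formula built from variables in imaginary sorts is, up to the syntax of $\mathcal{M}^{eq}$, one of: an equality $t_1 = t_2$ between terms, or an application of a relation symbol to terms, where terms may involve the class functions $f_E$ and projections between sorts. The basic observation is that for $\emptyset$-definable equivalence relations $E$ on $M^n$ and $E'$ on $M^m$, and any $\emptyset$-definable function $g$ from the $E$-sort to the $E'$-sort (or to the home sort), there is a corresponding $\emptyset$-definable relation on $M^{n} \times M^{m}$ (resp.\ $M^n \times M$) describing the graph of $g$ pulled back through the $f$'s; this is exactly what it means for $g$ to be definable in $\mathcal{M}^{eq}$. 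Using these graph formulas together with the defining formulas of the $E_i$, one rewrites $\phi(f_{E_1}(\bar z_1), \ldots, f_{E_l}(\bar z_l))$ step by step: each occurrence of $f_{E_i}(\bar z_i)$ inside an equality of imaginaries becomes the $\mathcal{L}_{\mathcal{M}}$-condition $E_i(\bar z_i, \bar z_i')$ on representatives, and so on. In the purely home-sort case the formula is already in $\mathcal{L}_{\mathcal{M}}$. Assembling these gives the desired $\psi$.

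\smallskip

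\noindent\textbf{Main obstacle.} The real content is bookkeeping rather than mathematics: one must set up $\mathcal{M}^{eq}$ precisely enough (which equivalence relations, which function symbols, how compound sorts and terms are formed) that the atomic case is genuinely a tautology of the construction, and then be careful that the induction on formulas respects the sorting of variables. I expect the only delicate point to be the quantifier step, where surjectivity of each $f_E$ onto its sort is exactly what licenses replacing an existential over an imaginary sort by an existential over a tuple of representatives; everything else is routine. Given the elementary nature of the statement, it would also be reasonable simply to cite the standard treatment of $\mathcal{M}^{eq}$ (e.g.\ the folklore that $\mathcal{M}$ and $\mathcal{M}^{eq}$ are bi-interpretable and hence every $\mathcal{L}_{\mathcal{M}}^{eq}$-formula on representatives is equivalent to an $\mathcal{L}_{\mathcal{M}}$-formula), and indicate the induction above as the proof.
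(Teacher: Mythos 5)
Your argument is correct: the induction on the structure of $\phi$, with the quantifier step justified by surjectivity of each $f_E$ onto its sort and the atomic case reduced to the defining $\mathcal{L}_{\mathcal{M}}$-formulas of the equivalence relations, is exactly the standard proof of this folklore fact about $\mathcal{M}^{eq}$. The paper records the lemma without proof, implicitly relying on precisely this routine argument, so your proposal matches the intended justification.
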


\subsection{Stability}
For a thorough introduction to stability we refer the reader to \cite{PillayStability}.  
We will now quickly record the notions and results we are interested in. 

\begin{definition} 
Let $\phi(\bar{x},\bar{y})$ be a first order $\mathcal{L}$-formula. Then $\phi(\bar{x},\bar{y})$ has the order property in an 
$\mathcal{L}$-structure $\mathcal{M}$, if there exist infinite sequences $(\bar{a}_n)_{n<\omega}$, $(\bar{b}_n)_{n<\omega}$ such that 
$\mathcal{M}\models\phi(\bar{a}_i,\bar{b}_j)$ if and only if $i<j$. 
\end{definition}

\begin{definition}
A first order theory $T$ is stable if no formula has the order property in any model of $T$.
\end{definition}

Stable theories can also be introduced as theories that support a good independence relation, for a discussion 
towards this end we refer the reader to \cite[Section 2]{PeSkFor}.

The following property has been introduced in \cite{KeislerFCP} by Keisler.

\begin{definition}
Let $\phi(\bar{x},\bar{y})$ be a first order $\mathcal{L}$-formula. Then $\phi(\bar{x},\bar{y})$ has the finite cover property in an 
$\mathcal{L}$-structure $\mathcal{M}$, if for arbitrarily large $n$ there exist $\bar{b}_1,\ldots,\bar{b}_n$ such that: 
$$\mathcal{M}\models \lnot\exists\bar{x}\bigwedge_{i\leq n} \phi(\bar{x},\bar{b}_i)$$ 
But for each $j\leq n$:
$$\mathcal{M}\models \exists\bar{x}\bigwedge_{i\leq n, i\neq j} \phi(\bar{x},\bar{b}_i)$$
\end{definition}

As before the property passes to a first order theory as follows:

\begin{definition}
A first order theory $T$ does not have the finite cover property (or $T$ has nfcp), if no formula 
has the finite cover property in any model of $T$.
\end{definition}

The following fact is almost immediate:

\begin{fact}
If a first order theory is unstable then it has the finite cover property.
\end{fact}

There is an equivalent point of view towards nfcp if we assume stability. Towards this end we define:

\begin{definition}
A first order theory $T$ eliminates the ``there exists infinitely many'' quantifier (or $T$ eliminates the $\exists^{\infty}$-quantifier) if 
to each first order formula $\phi(\bar{x},\bar{y})$ we can assign a natural number $n_{\phi}$ such that for 
any model $\mathcal{M}$ of $T$, for any $\bar{b}\in\mathcal{M}$, if $\abs{\phi(\mathcal{M},\bar{b})}<\infty$, then $\abs{\phi(\mathcal{M},\bar{b})}< n_{\phi}$.
\end{definition}

\begin{fact}\label{nfcp}
A stable first order theory $T$ does not have the finite cover property if and only if $T^{eq}$ eliminates 
the ``there exists infinitely many'' quantifier. 
\end{fact}

\subsection{Some model theory of the free group}

In this subsection we will record some fundamental theorems concerning the model theory of nonabelian free groups. 

We start by defining some basic families of imaginaries. 

\begin{definition}\label{Imaginaries}
Let $\F$ be a nonabelian free group. The following equivalence relations in $\F$ are called basic.
\begin{itemize}
 \item[(i)] $E_1(a,b)$ if and only if there is $g\in \F$ such that $a^g=b$. (conjugation)
 \item[$(ii)_m$] $E_{2_m}((a_1,b_1),(a_2,b_2))$ if and only if either $b_1=b_2=1$ or $b_1\neq 1$ and $C_{\F}(b_1)=C_{\F}(b_2)=\langle b \rangle$ and
$a_1^{-1}a_2\in\langle b^m \rangle$. ($m$-left-coset)
 \item[$(iii)_m$] $E_{3_m}((a_1,b_1),(a_2,b_2))$ if and only if either $b_1=b_2=1$ or $b_1\neq 1$ and $C_{\F}(b_1)=C_{\F}(b_2)=\langle b \rangle$ and
$a_1a_2^{-1}\in\langle b^m \rangle$. ($m$-right-coset)
 \item[$(iv)_{m,n}$] $E_{4_{m,n}}((a_1,b_1,c_1),(a_2,b_2,c_2))$ if and only if either 
$a_1=a_2=1$ or $c_1=c_2=1$ or $a_1,c_1\neq 1$ and $C_{\F}(a_1)=C_{\F}(a_2)=\langle a \rangle$ and $C_{\F}(c_1)=C_{\F}(c_2)=\langle c \rangle$
  and there is $\gamma\in \langle a^m \rangle$ and $\epsilon\in \langle c^n \rangle$ such that $\gamma b_1 \epsilon=b_2$. ($m,n$-double-coset)
\end{itemize}

\end{definition}

It is almost immediate that $m$-left cosets eliminate $m$-right cosets (and vice versa), 
so from now on we are austere and forget about the $m$-right-cosets (this observation, which does not use any special property of nonabelian free groups, 
has been pointed out to us by Ehud Hrushovski).

Sela proved the following theorem concerning imaginaries in nonabelian free groups (see \cite[Theorem 4.4]{SelaImaginaries}).

\begin{theorem}\label{Elim}
Let $\F$ be a nonabelian free group. Let $E(\bar{x},\bar{y})$ be a definable equivalence relation in $\F$, with $\abs{\bar{x}}=m$.
Then there exist $k,l<\omega$ and a definable relation:
$$R_E \subseteq \F^m \times \F^k \times S_1(\F) \times \ldots \times S_l(\F)$$
such that:
\begin{itemize}
 \item[(i)] each $S_i(\F)$ is one of the basic sorts;
 \item[(ii)] for each $\bar{a}\in \F^m$ , $\abs{R_E(\bar{a},\bar{z})}$ is uniformly bounded (i.e. the bound does not depend on $\bar{a}$);
 \item[(iii)] $\F\models\forall\bar{z}(R_E(\bar{a},\bar{z})\leftrightarrow R_E(\bar{b},\bar{z}))$ if and only if $E(\bar{a},\bar{b})$.
\end{itemize}
\end{theorem}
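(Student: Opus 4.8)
The plan is to follow the strategy of \cite{SelaImaginaries}: reduce the coding of an arbitrary definable $E$-class to the ``flexible'' behaviour captured by the four basic sorts, which in turn is governed by the modular automorphisms (Dehn twists along cyclic edges, inner automorphisms, automorphisms of surface-type vertex groups) attached to canonical cyclic splittings. First I would replace $E$ by the uniformly definable family of its classes $D_{\bar a}:=\{\bar b\in\Gamma^m : E(\bar a,\bar b)\}$ and try to attach to each $D_{\bar a}$ a finite tuple of elements of $\Gamma$ together with a finite tuple in the basic sorts, depending only on $D_{\bar a}$ (equivalently only on $[\bar a]_E$), from which $D_{\bar a}$ can be reconstructed uniformly. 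The natural first invariant to extract is the graded Diophantine envelope of the family $\{D_{\bar a}\}$ (Section~\ref{EnvelopeGraded}): a uniformly definable family $\{W_{\bar a}\}$ of Diophantine sets with $D_{\bar a}\subseteq W_{\bar a}$, built relative to the parameter $\bar a$ so that its dependence on $\bar a$ already factors through $[\bar a]_E$.

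The second step is to analyse $W_{\bar a}$, and the difference $W_{\bar a}\setminus D_{\bar a}$, via the Makanin--Razborov and shortening machinery. Up to the constraints imposed by $\bar a$, each $W_{\bar a}$ is a finite union of images of $\Hom$-sets of fixed finitely presented groups; the shortening argument stratifies it into pieces on which the residual ambiguity is generated by a modular group acting on rigid or solid factors. Here the arithmetic of torsion-free hyperbolic groups enters decisively: centralisers of nontrivial elements are infinite cyclic, the canonical (JSJ-type) splittings have cyclic edge groups, and the modular group sits with bounded index inside the group of Dehn twists. Hence the orbit of a coordinate under a single interior Dehn twist along $\langle b\rangle$ is, up to a fixed finite index $m$ depending only on $\phi$, exactly an $\langle b^{m}\rangle$-coset --- this is the origin of the sort $E_{2_m}$; twisting at both ends of an edge path produces the double cosets $E_{4_{m,n}}$; the inner automorphisms and the point stabilisers of the real-tree action give the conjugacy sort $E_1$; and the finitely many rigid configurations surviving shortening are coded directly by an element of $\Gamma^{k}$. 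One then builds $R_E(\bar a,-)$ so as to enumerate precisely this finite package of conjugacy/coset/double-coset data together with the rigid part. Uniform finiteness (clause~(ii)) I would derive from Theorem~\ref{BoundSolid}: the bound on the number of strictly solid families associated to a solid limit group, applied level by level in the shortening hierarchy of $W_{\bar a}$, bounds the number of rigid and solid components --- hence the size of the fibre $R_E(\bar a,\bar z)$ --- independently of $\bar a$. The construction is organised as an induction on the finite complexity of that hierarchy: at the bottom there are only finitely many homomorphisms uniformly and the coding is by elements of $\Gamma$; each higher level adds one further layer of basic-sort coding for the acting modular group, and a composition of finitely many uniformly finite layers is again uniformly finite. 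Lemma~\ref{ImagSorts} is then used throughout to pass between the imaginary sorts and honest $\mathcal{L}_\Gamma$-formulas.

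The main obstacle --- the technical heart of \cite{SelaImaginaries} --- is clause~(iii): the coding must be simultaneously $E$-invariant (so that the envelope, the stratification, and the extracted modular data depend only on the class, which gives the direction ``$E(\bar a,\bar b)\Rightarrow$ equal $R_E$-fibres'') and fine enough to recover the class (the converse), all the while keeping the fibres uniformly bounded. These requirements pull in opposite directions: coarsening the data to force $E$-invariance risks merging distinct classes, whereas refining it risks breaking $E$-invariance or overshooting the bound of Theorem~\ref{BoundSolid}. The reconciliation is exactly the gradedness of the Diophantine envelope, together with the observation that after passing to the envelope the only remaining ambiguity is generated by the modular automorphisms --- whose orbits on a tuple are, by the cyclic-centraliser structure, precisely the basic sorts of Definition~\ref{Imaginaries}.
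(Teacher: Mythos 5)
First, a point of comparison you could not have known: the paper does not prove Theorem~\ref{Elim} at all. It is quoted verbatim as Sela's result (Theorem 4.4 of \cite{SelaImaginaries}) and used as a black box in Section~\ref{FCP}, so there is no internal proof to measure your attempt against; the only fair comparison is with the argument in \cite{SelaImaginaries} itself, of which your text is a high-level roadmap rather than a proof.

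As a proof, the proposal has genuine gaps at exactly the points you yourself flag as the heart of the matter. (1) You posit an assignment $\bar a\mapsto W_{\bar a}$ of graded Diophantine envelopes ``whose dependence on $\bar a$ already factors through $[\bar a]_E$'', but the envelope of Theorem~\ref{GradedEnvelope} is attached to a formula $\phi(\bar x,\bar y)$, not to an $E$-class, and nothing in its construction makes it invariant under replacing $\bar a$ by an $E$-equivalent $\bar b$; establishing a class-invariant from it is precisely the technical content of clause (iii), and saying ``the reconciliation is exactly the gradedness of the envelope'' names the difficulty without resolving it. (2) The claim that a Dehn-twist orbit of a coordinate is, ``up to a fixed finite index $m$ depending only on $\phi$, exactly a $\langle b^{m}\rangle$-coset'' is asserted, not argued: the existence of a single exponent $m$ (and of a single finite list of sorts $S_1,\dots,S_l$ and a single $k$) valid uniformly over all parameters $\bar a$ is part of what must be proved, and it does not follow formally from cyclic centralizers plus bounded-index statements about the modular group. (3) For clause (ii) you invoke Theorem~\ref{BoundSolid} ``level by level in the shortening hierarchy'', but you give no reason why the number of levels, and the amount of coding data produced at each level, is bounded independently of $\bar a$; Theorem~\ref{BoundSolid} bounds strictly solid families for a fixed solid limit group, and the passage from that to a uniform bound on $\abs{R_E(\bar a,\bar z)}$ is again the substance of Sela's argument, not a corollary of its statement. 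In short, the ingredients you list (graded envelopes, shortening/MR machinery, cyclic centralizers yielding the conjugacy, coset and double-coset sorts, uniform bounds from strictly solid families) are the right ones, but every step that distinguishes a proof from a plan — $E$-invariance of the extracted data, recoverability of the class from it, and uniform finiteness — is deferred to the very machinery of \cite{SelaImaginaries} that the theorem summarizes.
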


Nonabelian free groups share the same first-order theory (see \cite{Sel6}, \cite{KharlampovichMyasnikov}), which we call the theory of 
the free group.

\begin{fact}[Sela, Kharlampovich-Miasnikov]
Let $\F_n$ be the free group of rank $n$. Then $\F_n$ is elementarily equivalent to $\F_m$ for any $m,n\geq 2$. 
\end{fact}

The following astonishing result has been proved by Sela.

\begin{fact}[Sela \cite{SelaStability}]
The theory of the free group is stable.
\end{fact}

\section{Some geometry}\label{Geometry}

In this section we record some background material from geometric group theory. Our choice is biased towards our needs in this paper. 
In particular, we will start by giving some quick introduction 
to Bass-Serre theory. We will then continue by defining $\R$-trees and certain group actions on them that will play an essential role in the  proofs of our results. 

\subsection{Bass-Serre theory}\label{BS}

Bass-Serre theory gives a structure theorem for groups acting on (simplicial) trees.
It describes a group (that acts on a tree) 
as a series of {\em amalgamated free products} and {\em HNN extensions}. The mathematical notion that 
contains these instructions is called a graph of groups. For a complete treatment we refer the reader to 
\cite{SerreTrees}.

We start with the definition of a graph.

\begin{definition}
A graph $G(V,E)$ is a collection of data that consists of two sets $V$ (the set of vertices) and $E$ (the set of edges) 
together with three maps:
\begin{itemize}
 \item an involution $\bar{\phantom{1}}:E\to E$, where $\bar{e}$ is called the inverse of $e$;
 \item $\alpha:E\to V$, where $\alpha(e)$ is called the initial vertex of $e$; and
 \item $\tau:E\to V$, where $\tau(e)$ is called the terminal vertex of $e$. 
\end{itemize}
so that $\bar{e}\neq e$, and $\alpha(e)=\tau(\bar{e})$ for every $e\in E$.
\end{definition}

An {\em orientation} of a graph $G(V,E)$ is a choice of one edge in the couple $(e,\bar{e})$ for every $e\in E$. We denote 
an oriented graph by $G^{+}(V,E)$.

For our purposes simplicial trees can also be viewed as combinatorial objects: a {\em tree} is a connected graph without a circuit. 

\begin{definition}[Graph of Groups]
A graph of groups $\mathcal{G}:=(G(V,E), \{G_u\}_{u\in V}, \{G_e\}_{e\in E},$ $\{f_e\}_{e\in E})$ consists of the following data:
\begin{itemize}
 \item a graph $G(V,E)$;
  \item a family of groups $\{G_u\}_{u\in V}$, i.e. a group is attached to each vertex of the graph;
  \item a family of groups $\{G_e\}_{e\in E}$, i.e. a group is attached to each edge of the graph. Moreover, $G_{e}=G_{\bar{e}}$;
  \item a collection of injective morphisms $\{f_e:G_e\to G_{\tau(e)} \ | \ e\in E\}$, i.e. each edge group comes 
  equipped with two embeddings to the incident vertex groups. 
\end{itemize}

\end{definition}

The fundamental group of a graph of groups is defined as follows.

\begin{definition}
Let $\mathcal{G}:=(G(V,E), \{G_u\}_{u\in V}, \{G_e\}_{e\in E}, \{f_e\}_{e\in E})$ be a graph of groups. Let $T$ be a maximal subtree of $G(V,E)$. 
Then the fundamental group, $\pi_1(\mathcal{G},T)$, of $\mathcal{G}$ with respect to $T$ is the group given by the following presentation:
$$\langle \{G_u\}_{u\in V}, \{t_e\}_{e\in E} \ | \ t^{-1}_e=t_{\bar{e}} \ \textrm{for} \ e\in E, t_e=1 \ \textrm{for} \ e\in T, 
f_e(a)=t_ef_{\bar{e}}(a)t_{\bar{e}} \ \textrm{for} \ e\in E \ a\in G_e\rangle$$
\end{definition}

\begin{remark}
It is not hard to see that the fundamental group of a graph of groups does not depend on the 
choice of the maximal subtree up to isomorphism (see \cite[Proposition 20, p.44]{SerreTrees}).
\end{remark}

In order to give the main theorem of Bass-Serre theory we need the following definition.

\begin{definition}  
Let $G$ be a group acting on a simplicial tree $T$ without inversions,
denote by $\Lambda$ the corresponding quotient graph and by $p$ the quotient map $T \to \Lambda$. 
A Bass-Serre presentation for the action of $G$ on $T$ is a triple $(T^1, T^0, \{\gamma_e\}_{e\in E(T^1)\setminus E(T^0)})$ consisting of
\begin{itemize}
\item a subtree $T^1$ of $T$ which contains exactly one edge of $p^{-1}(e)$ for each edge $e$ of $\Lambda$;
\item a subtree $T^0$ of $T^1$ which is mapped injectively by $p$ onto a maximal subtree of $\Lambda$;
\item a collection of elements of $G$, $\{\gamma_e\}_{e\in E(T^1)\setminus E(T^0)}$, such that if $e=(u,v)$ with $v\in T^1\setminus T^0$, 
then $\gamma_e\cdot v$ belongs to $T^0$.
\end{itemize}
\end{definition}

\begin{theorem}
Suppose $G$ acts on a simplicial tree $T$ without inversions. Let $(T^1,T^0, \{\gamma_e\})$ be a Bass-Serre presentation for the action. 
Let $\mathcal{G}:=(G(V,E), \{G_u\}_{u\in V}, \{G_e\}_{e\in E}, \{f_e\}_{e\in E})$ be the following graph of groups:
\begin{itemize}
 \item $G(V,E)$ is the quotient graph given by $p:T\to \Gamma$;
 \item if $u$ is a vertex in $T^0$, then $G_{p(u)}=Stab_G(u)$;
 \item if $e$ is an edge in $T^1$, then $G_{p(e)}=Stab_G(e)$;
 \item if $e$ is an edge in $T^1$, then $f_{p(e)}:G_{p(e)}\to G_{\tau(p(e))}$ is given by the identity 
 if $e\in T^0$ and by conjugation by $\gamma_e$ if not.
\end{itemize}

Then $G$ is isomorphic to $\pi_1(\mathcal{G})$. 
\end{theorem}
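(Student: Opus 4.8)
The plan is to construct an explicit isomorphism $\varphi:\pi_1(\mathcal{G})\to G$ and then verify it is bijective using the normal form theorem for amalgamated products and HNN extensions (equivalently, the combinatorial description of $\pi_1(\mathcal{G})$ as a group acting on its Bass–Serre tree). First I would fix the Bass–Serre presentation $(T^1,T^0,\{\gamma_e\})$ and the maximal subtree $p(T^0)$ of $\Lambda=G(V,E)$, so that $\pi_1(\mathcal{G})=\pi_1(\mathcal{G},p(T^0))$ has the presentation with generators $\{G_u\}_{u\in V}$ and stable letters $\{t_e\}_{e\in E}$, where $t_e=1$ for $e\in p(T^0)$. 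Since each vertex group $G_{p(u)}$ is literally $\mathrm{Stab}_G(u)$ for $u\in T^0$, I would define $\varphi$ on each vertex group to be the inclusion into $G$, and on each stable letter $t_e$ (for $e\notin p(T^0)$) I would set $\varphi(t_e)=\gamma_e^{-1}$, with the convention for the lifted edge and the orientation chosen so that $\gamma_e\cdot v\in T^0$.

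Second I would check that $\varphi$ respects the defining relations of $\pi_1(\mathcal{G})$: the relation $t_e^{-1}=t_{\bar e}$ is immediate from the choice of $\gamma_{\bar e}$ relative to $\gamma_e$, the relation $t_e=1$ for $e\in p(T^0)$ holds because those edges lift into $T^0$ where no conjugating element is needed, and the relation $f_e(a)=t_e f_{\bar e}(a)t_{\bar e}^{-1}$ for $a\in G_e$ translates, under the identification of edge groups with edge stabilizers, into the statement that conjugating $\mathrm{Stab}_G(e)$ by the appropriate product of $\gamma$'s carries it to $\mathrm{Stab}_G$ of the corresponding edge in $T^0$ — which is exactly how the maps $f_{p(e)}$ were defined in the statement (identity when $e\in T^0$, conjugation by $\gamma_e$ otherwise). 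Hence $\varphi$ is a well-defined homomorphism.

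Third, for surjectivity I would use that $G$ is generated by the vertex stabilizers $\{\mathrm{Stab}_G(u):u\in T^0\}$ together with the elements $\{\gamma_e\}$: given any $g\in G$ and any vertex $v\in T^0$, the edge-path in $T$ from $v$ to $gv$ projects to a closed path in $\Lambda$, and chasing it back through the $\gamma_e$'s and the vertex stabilizers along the way exhibits $g$ as a product in the image of $\varphi$; this is where connectedness of $T$ and the defining property that $T^1$ meets every $G$-orbit of edges is used. For injectivity I would appeal to the Bass–Serre tree $\widetilde{T}$ of $\mathcal{G}$ (the tree on which $\pi_1(\mathcal{G})$ acts with quotient $\Lambda$ and the prescribed vertex and edge stabilizers): one builds a $\varphi$-equivariant simplicial map $\widetilde{T}\to T$ sending the fundamental domain of $\widetilde{T}$ to $T^1$, and shows it is an isomorphism of trees — it is a local isomorphism because the link data (cosets of edge groups in vertex groups) match the link data in $T$ by the stabilizer identifications, and a local isomorphism of trees that is a bijection on one vertex's orbit is a global isomorphism. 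An element in $\ker\varphi$ then acts trivially on $\widetilde{T}$, but the action of $\pi_1(\mathcal{G})$ on $\widetilde{T}$ is faithful (edge and vertex groups inject and the action is without inversions), so $\ker\varphi=1$.

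The main obstacle is the injectivity step: making the equivariant map $\widetilde{T}\to T$ precise and proving it is an isomorphism is essentially equivalent to proving the normal form theorem for $\pi_1(\mathcal{G})$, and one must be careful that the choices of edge lifts in $T^1\setminus T^0$ and the conjugators $\gamma_e$ are used consistently (a different choice changes $\varphi$ by composition with an inner automorphism, matching the earlier remark that $\pi_1(\mathcal{G})$ is independent of the maximal subtree up to isomorphism). Everything else — well-definedness and surjectivity — is a routine diagram chase once the dictionary between the presentation and the tree action is set up, so I would cite \cite[Theorem 13, p.55]{SerreTrees} for the normal form input and keep the write-up at the level of fixing notation and verifying the relations.
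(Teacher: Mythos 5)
The paper does not prove this statement at all: it is quoted as the classical structure theorem of Bass--Serre theory, with the reader referred to Serre's \emph{Trees}, so there is no in-paper argument to compare against. Your sketch is the standard proof from that reference (define $\varphi$ on vertex groups by inclusion and on stable letters via the $\gamma_e$, check the relations, get surjectivity from connectedness of $T$ and the fact that $T^1$ meets every edge orbit, and get injectivity by comparing $T$ with the Bass--Serre tree $\widetilde{T}$ of $\mathcal{G}$), and at that level it is fine; the orientation conventions for $\varphi(t_e)=\gamma_e^{\pm 1}$ and the exact form of the edge relation are the usual bookkeeping you flag correctly.

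One step, however, is wrong as stated: the action of $\pi_1(\mathcal{G})$ on $\widetilde{T}$ is \emph{not} faithful in general (take the degenerate case where $T$ is a single vertex, so $\pi_1(\mathcal{G})=G$ acts trivially on a point; more generally the kernel is the intersection of all conjugates of the vertex groups), so you cannot conclude $\ker\varphi=1$ from ``$g$ acts trivially on $\widetilde{T}$, and the action is faithful.'' The repair is short and uses only ingredients you already invoke: an element acting trivially on $\widetilde{T}$ fixes a vertex, hence lies in a conjugate of (the image of) a vertex group; by the normal form theorem the vertex groups embed in $\pi_1(\mathcal{G})$, and the composite $G_{p(u)}\to\pi_1(\mathcal{G})\xrightarrow{\varphi} G$ is the literal inclusion $\Stab_G(u)\hookrightarrow G$, which is injective; so any element of $\ker\varphi$ killed in $G$ must already be trivial. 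Alternatively one can avoid $\widetilde{T}$ altogether and show directly that a reduced word $g_0t_{e_1}g_1\cdots t_{e_n}g_n$ with $n\geq 1$ maps to an element of $G$ moving the base vertex of $T^0$ along a non-backtracking path of length $n$ in $T$, which is essentially Serre's own argument and proves injectivity and the normal form simultaneously. With that correction your outline is a complete and accurate account of the cited proof.
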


Among splittings of groups we will distinguish those 
with some special type vertex groups called {\em surface type vertex groups}.

\begin{definition}
Let $G$ be a group acting on a tree $T$ without inversions and $(T_1,T_0,\{\gamma_e\})$ be a Bass-Serre presentation for this action. 
Then a vertex $v\in T^0$ is called a surface type vertex if the following conditions hold:
\begin{itemize}
 \item $\Stab_G(v)=\pi_1(\Sigma)$ for a connected compact surface $\Sigma$ with non-empty boundary;
 \item For every edge $e\in T_1$ adjacent to $v$, $\Stab_G(e)$ embeds onto a maximal boundary 
 subgroup of $\pi_1(\Sigma)$, and this induces a one-to-one correspondence between the 
 set of edges (in $T^1$) adjacent to $v$ and the set of boundary components of $\Sigma$.
\end{itemize}

\end{definition}

\subsection{Real trees}

Real trees (or $\R$-trees) generalize simplicial trees in the following way.  

\begin{definition}
A real tree is a geodesic metric space in which 
for any two points there is a unique arc that connects them.
\end{definition}

When we say that a group $G$ acts on an real tree $T$ we will always mean an action by isometries.

Moreover, an action $G\curvearrowright T$ of a group $G$ on a real tree $T$ is called {\em non-trivial} if there is no 
globally fixed point and {\em minimal} if there is no proper $G$-invariant subtree. Lastly, an action is called {\em free} 
if for any $x\in T$ and any non trivial $g\in G$ we have that $g\cdot x\neq x$.

We continue by recalling some families of group actions on real trees. 

\begin{definition}
Let $G\curvearrowright^{\lambda} T$ be a minimal action of a finitely generated group $G$ on a real tree $T$. Then we say:
\begin{itemize}
 \item[(i)] $\lambda$ is of simplicial type, if every orbit $G.x$ is discrete in $T$. In this case $T$ is simplicial 
 and the action can be analyzed using Bass-Serre theory;
 \item[(ii)] $\lambda$ is of axial type, if $T$ is isometric to the real line $\R$ and $G$ acts with dense orbits, 
 i.e. $\overline{G.x}=T$ for every $x\in T$;
 \item[(iii)] $\lambda$ is of surface type, if $G=\pi_1(\Sigma)$ where $\Sigma$ is a surface with (possibly empty) boundary 
 carrying an arational measured foliation and $T$ is dual to $\tilde{\Sigma}$, i.e. $T$ is the lifted leaf space in $\tilde{\Sigma}$ 
 after identifying leaves of distance $0$ (with respect to the pseudo-metric induced by the measure);
\end{itemize}
\end{definition}

\begin{fact}
Suppose $\pi_1(\Sigma)$ acts on a real tree $T$ by a surface type action. Then the action is  ``almost free'', i.e. 
only elements that belong to subgroups that correspond to the boundary components fix points in $T$ and segment stabilizers are trivial. 
In particular when $\Sigma$ has empty boundary the action is free (see \cite{MorganShalenFree}).
\end{fact}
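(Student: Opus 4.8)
=== PROOF PROPOSAL ===

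The final statement is the \textbf{Fact} asserting that a surface type action of $\pi_1(\Sigma)$ on a real tree $T$ is ``almost free'': only the boundary subgroups fix points, and segment stabilizers are trivial (with the action being free when $\partial\Sigma=\emptyset$). My plan is to reduce everything to the classical theory of measured foliations on surfaces and the duality construction, since by definition $T$ is the lifted leaf space of an arational measured foliation $\mathcal{F}$ on $\Sigma$, with distance measured by transverse measure.

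First I would set up notation: lift $\mathcal{F}$ to a foliation $\tilde{\mathcal{F}}$ on $\tilde{\Sigma}$, and recall that points of $T$ are leaves (or leaf closures) of $\tilde{\mathcal{F}}$, with $d_T$ induced by the transverse measure $\mu$. The key geometric input is arationality: $\mathcal{F}$ has no closed leaves and no leaf cycles, and every half-leaf is dense in the corresponding subsurface — so in particular $\mathcal{F}$ is filling. For the segment stabilizer claim, suppose $g\in\pi_1(\Sigma)$ fixes a nondegenerate arc $[x,y]\subseteq T$. Then $g$ fixes a whole band of leaves, i.e. $g$ preserves each leaf in a positive-measure family of leaves of $\tilde{\mathcal{F}}$; projecting to $\Sigma$ this forces $g$ to preserve a transverse arc pointwise, hence $\langle g\rangle$ acts trivially on a subsurface bounded by leaves, contradicting arationality (such a $g$ would give a closed leaf or reduce the foliation) unless $g=1$; here I would lean on the standard fact (Morgan--Shalen, and the references \cite{MorganShalenFree}) that duals of arational foliations have trivial arc stabilizers. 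For the point stabilizer claim, if $g$ fixes a point $x\in T$, i.e. $g$ preserves a leaf $\ell$ of $\tilde{\mathcal{F}}$, then either $\ell$ covers a noncompact leaf (impossible to be preserved by a nontrivial deck transformation without $g$ being peripheral, since a leaf invariant under $\langle g\rangle$ projects to a leaf in a cyclic cover, and arationality forces the only such $g$ to come from the boundary) or $\ell$ is a boundary leaf, in which case $g$ lies in a maximal boundary subgroup. When $\partial\Sigma=\emptyset$ there are no boundary leaves, so no nontrivial element fixes a point and the action is free.

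The main obstacle, and the step I would expect to require the most care, is the point-stabilizer analysis: ruling out that a nonperipheral element preserves a leaf of $\tilde{\mathcal{F}}$. The subtlety is that a single leaf can be preserved by a nontrivial group element even when the foliation is arational (this is exactly how interval exchange subdynamics create stabilizers), so ``almost free'' genuinely allows boundary subgroups to act with fixed points, and one must verify that these are the \emph{only} fixed-point stabilizers. The clean way to handle this is to invoke the structure of the dual tree directly: by Skora's theorem the dual tree of a measured foliation is precisely recovered, and for an arational filling foliation the only subgroups fixing points are the ones carried by the boundary (equivalently, the leaves that project to the boundary of $\Sigma$), which is the content of the cited work of Morgan and Shalen. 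Thus rather than reprove this from scratch I would state it as a direct consequence of that theory, giving the short argument above for segment stabilizers and citing \cite{MorganShalenFree} for the identification of point stabilizers with boundary subgroups.
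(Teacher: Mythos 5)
The paper gives no proof of this Fact at all: it is stated as a known result about duals of arational measured foliations, with a citation to \cite{MorganShalenFree}. Your proposal, which reduces the claim to the classical theory of arational foliations (trivial arc stabilizers via arationality, point stabilizers identified with boundary subgroups via Morgan--Shalen/Skora) and ultimately cites the same source for the substantive step, is correct in substance and takes essentially the same route as the paper.
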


We will use the notion of a graph of actions in order to glue real trees equivariantly.  
We follow the exposition in \cite[Section 1.3]{GuirardelRTrees}.

\begin{definition}[Graph of actions]
A graph of actions $(G\curvearrowright T,\{Y_u\}_{u\in V(T)},\{p_e\}_{e\in E(T)})$ consists of the following data:
\begin{itemize}
 \item A simplicial type action $G\curvearrowright T$; 
 \item for each vertex $u$ in $T$ a real tree $Y_u$;
 \item for each edge $e$ in $T$, an attaching point $p_e$ in $Y_{\tau(e)}$.
\end{itemize}

Moreover:
\begin{enumerate}
 \item $G$ acts on $R:=\{\coprod Y_u : u\in V(T)\}$ so that $q:R\to V(T)$ with $q(Y_u)=u$ is $G$-equivariant;
 \item for every $g\in G$ and $e\in E(T)$, $p_{g\cdot e}=g\cdot p_e$.
\end{enumerate}

\end{definition}

To a graph of actions $\mathcal{A}:=(G\curvearrowright T,\{Y_u\}_{u\in V(T)},\{p_e\}_{e\in E(T)})$ we can assign an $\R$-tree $Y_{\mathcal{A}}$ endowed with 
a $G$-action. Roughly speaking this tree will be $\coprod_{u\in V(T)} Y_{u}/\sim$, where the equivalence relation $\sim$ identifies $p_e$ with 
$p_{\bar{e}}$ for every $e\in E(T^{+})$. We say that a real $G$-tree $Y$ {\em decomposes as a graph of actions} $\mathcal{A}$, if there is 
an equivariant isometry between $Y$ and $Y_{\mathcal{A}}$. 

Assume a real $G$-tree $Y$ decomposes as a graph of actions. Then a useful property is that $Y$ is covered by $(Y_u)_{u\in V(T)}$ 
and moreover these trees intersect ``transversally''.

\begin{definition}
Let $Y$ be an $\R$-tree and $(Y_i)_{i\in I}$ be a family of subtrees that cover $Y$. Then we call this covering 
a transverse covering if the following conditions hold:
\begin{itemize}
 \item for every $i\in I$, $Y_i$ is a closed subtree;
 \item for every $i,j\in I$ with $i\neq j$, $Y_i\cap Y_j$ is either empty or a point;
 \item every segment in $Y$ is covered by finitely many $Y_i$'s.
\end{itemize}

\end{definition}

The next lemma is by no means hard to prove (see \cite[Lemma 4.7]{GuirardelRnTrees}).

\begin{lemma}\label{TraCov}
Let $\mathcal{A}:=(G\curvearrowright T,\{Y_u\}_{u\in V(T)},\{p_e\}_{e\in E(T)})$ be a graph of actions. 
Suppose $G\curvearrowright Y$ decomposes as the graph of actions $\mathcal{A}$.  
Then $(Y_u)_{u\in V(T)}$ is a transverse covering of $Y$. 
\end{lemma}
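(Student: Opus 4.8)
The plan is to unwind the definitions and exhibit the three properties of a transverse covering directly from the construction of $Y_{\mathcal{A}}$ as $\coprod_{u\in V(T)} Y_u/\sim$, together with the equivariant isometry identifying $Y$ with $Y_{\mathcal{A}}$. So first I would fix the equivariant isometry and simply transport everything, i.e. work inside $Y_{\mathcal{A}}$ and regard each $Y_u$ as its image there; one checks that the natural maps $Y_u \to Y_{\mathcal{A}}$ are isometric embeddings onto closed subtrees, which gives the first bullet. The key combinatorial input is that $T$ is a \emph{simplicial} tree and the action $G\curvearrowright T$ is of simplicial type, so every reduced edge-path in $T$ is finite on finite segments, and each edge $e$ contributes exactly one identification point $p_e\sim p_{\bar e}$.

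Next I would verify the second bullet, that $Y_u\cap Y_v$ is empty or a single point for $u\neq v$. The idea is that a point of $Y_{\mathcal{A}}$ lying in both $Y_u$ and $Y_v$ must arise from a chain of identifications along the unique reduced path from $u$ to $v$ in $T$; since each step glues along a single attaching point, the image of $Y_u$ meets the image of the "next" tree along the path in exactly the point $p_e$, and by induction on the $T$-distance between $u$ and $v$ the intersection $Y_u\cap Y_v$ is at most a point (it is exactly the relevant attaching point when $u,v$ are adjacent in $T$, and empty or a point otherwise). Here one uses that $Y_{\mathcal{A}}$ really is a tree, so there are no "accidental" extra identifications beyond those dictated by $\sim$.

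For the third bullet, that every segment of $Y$ is covered by finitely many $Y_u$, I would take a geodesic segment $[x,y]$ in $Y_{\mathcal{A}}$, project its endpoints to vertices $u_x,u_y$ of $T$ via the map $q$, and observe that the segment is contained in the union of the $Y_w$ for $w$ ranging over the (finite, since $T$ is simplicial) reduced path from $u_x$ to $u_y$ in $T$; more carefully, the structure of $Y_{\mathcal{A}}$ shows $[x,y]$ decomposes as a concatenation of subsegments each lying entirely in one $Y_w$ along this path, so finitely many suffice. Equivariance of the whole picture — $p_{g\cdot e}=g\cdot p_e$ and $q$ being $G$-equivariant — is only needed to know the $Y_u$ are permuted by $G$ and hence the covering is genuinely $G$-invariant, but it plays no role in the three defining conditions themselves.

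I do not expect any serious obstacle here; as the paper itself notes, the lemma is "by no means hard to prove." The only point requiring a little care is making precise the claim that the identifications in $Y_{\mathcal{A}}$ do not produce intersections larger than a point — i.e. that the quotient $\coprod_u Y_u/\sim$ is an $\R$-tree in which the images of the $Y_u$ overlap minimally — and this is really where one invokes that $T$ is a tree (no cycles) so that the transitive closure of the relation $p_e\sim p_{\bar e}$ cannot loop back and merge a positive-length arc of some $Y_u$ into another. Beyond that, the proof is a routine bookkeeping argument following the exposition in \cite[Lemma 4.7]{GuirardelRnTrees}, and I would simply cite that reference for the details while recording the three verifications above.
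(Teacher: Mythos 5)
Your proposal is correct and follows exactly the standard verification: the paper itself gives no argument for this lemma, merely noting it is easy and citing Guirardel, and your unwinding of the construction of $Y_{\mathcal{A}}=\coprod_u Y_u/\sim$ (closedness of the images, intersections controlled by the attaching points along the unique path in $T$, finiteness of segment covers from simpliciality of $T$) is the same routine bookkeeping that the cited reference carries out. You also correctly isolate the one point needing care, namely that the absence of cycles in $T$ prevents the transitive closure of $p_e\sim p_{\bar e}$ from identifying more than single points of distinct vertex trees.
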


\section{Limit groups}\label{LimitGroups}

In this section we define the basic objects of this paper, namely {\em graded limit groups}. 

We will start by defining {\em limit groups} in a geometric way: a limit group is a quotient of a finitely generated group by 
the kernel of an action (obtained in a specific way) of the group on a real tree. 

In the second subsection we consider the class of graded limit groups, which can be thought of as a class of group couples, $(G,H)$, 
where $G$ is a limit group and $H$ a finitely generated subgroup of $G$. 
We record some theorems about {\em solid limit groups} 
(i.e. graded limit groups that satisfy some extra conditions) that underpin our main idea for proving the main theorem of this paper.

\subsection{The Bestvina-Paulin method}
The following construction is credited to Bestvina \cite{BesLimit} and Paulin \cite{PaulinGromov} independently. 

We fix a finitely generated group $G$ and we consider the set of non-trivial 
equivariant pseudometrics $d:G\times G\to \R^{\geq 0}$, denoted by $\mathcal{ED}(G)$. 
We equip $\mathcal{ED}(G)$ with the compact-open topology (where $G$ is given the discrete topology). Note 
that convergence in this topology is given by: 
$$(d_i)_{i<\omega}\to d\ \ \textrm{if and only if} \ \ d_i(1,g)\to d(1,g)\ \ (\textrm{in $\R$}) \ \ \textrm{for any}\ \ g\in G$$

Is not hard to see that $\R^+$ acts cocompactly on $\mathcal{ED}(G)$ by rescaling, thus 
the space of {\em projectivised equivariant pseudometrics} on $G$ is compact.

We also note that any based $G$-space $(X,*)$ (i.e. a metric space with a distinguished point equipped with an action of 
$G$ by isometries) gives rise to an equivariant pseudometric on $G$ as follows: $d(g,h)=d_X(g\cdot *,h\cdot *)$. 

We say that a sequence of $G$-spaces $(X_i,*_i)_{i<\omega}$ converges to a $G$-space $(X,*)$, if the 
corresponding pseudometrics induced by $(X_i,*_i)$ converge to the pseudometric induced by $(X,*)$ in $\mathcal{PED}(G)$. 

A morphism $h:G\to H$ where $H$ is a finitely generated group induces an action of $G$ on  
$\mathcal{X}_H$ (the Cayley graph of $H$) in the obvious way, thus making $\mathcal{X}_H$ a $G$-space. We have:

\begin{lemma}\label{LimitAction} 
Let $\F$ be a nonabelian free group. Let $(h_n)_{n<\omega}:G\to\F$ be a sequence of pairwise non-conjugate morphisms. 
Then for each $n<\omega$ there exists a base point $*_n$ 
in $\mathcal{X}_{\F}$ such that the sequence of $G$-spaces $(\mathcal{X}_{\F},*_n)_{n<\omega}$ has a convergent subsequence to 
a real $G$-tree $(T,*)$, where the action of $G$ on $T$ is nontrivial.
\end{lemma}

Every point in the limiting tree $T$, obtained as above, can be approximated by 
a sequence of points of the converging subsequence in the following sense.

\begin{lemma}\label{ApproximatingSequences}
Assume $(\mathcal{X}_{\F}, *_n)_{n<\omega}$ converges to $(T,*)$ as in Lemma \ref{LimitAction}. Then for any $x\in T$, the following hold:
\begin{itemize}
 \item there exists a sequence $(x_n)_{n<\omega}$ such that $\hat{d}_n(x_n,g\cdot *_n)\to d_T(x,g\cdot *)$ for any $g\in G$, where $\hat{d}_n$ 
 denotes the rescaled metric of $\mathcal{X}_{\F}$, we call such a sequence an approximating sequence;
 \item if $(x_n)_{n<\omega},(x'_n)_{n<\omega}$ are two approximating sequences for $x\in T$, then $\hat{d}_n(x_n,x'_n)\to 0$;
 \item if $(x_n)_{n<\omega}$ is an approximating sequence for $x$, then $(g\cdot x_n)_{n<\omega}$ is an approximating sequence for 
 $g\cdot x$;
 \item if $(x_n)_{n<\omega},(y_n)_{n<\omega}$ are approximating sequences for $x,y$ respectively, then $\hat{d}_n(x_n,y_n)\to d_T(x,y)$.
\end{itemize}

\end{lemma}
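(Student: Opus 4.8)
The plan is to prove Lemma \ref{ApproximatingSequences} by carefully unwinding the Bestvina--Paulin construction underlying Lemma \ref{LimitAction}, using the standard fact that a Gromov--Hausdorff limit of based geodesic metric spaces comes equipped with canonical approximating sequences, and then checking equivariance. Recall the setup: each based $G$-space is $(\mathcal{X}_\Gamma, *_n)$ with the rescaled metric $\hat d_n = \frac{1}{\lambda_n} d_{\mathcal{X}_\Gamma}$, where $\lambda_n\to\infty$ is the rescaling factor forced by the fact that the $h_n$ are pairwise non-conjugate (so the translation lengths blow up and one must rescale to keep a basepoint at bounded distance from the ``action''). The limit tree $(T,*)$ is obtained as an equivariant Gromov--Hausdorff limit along a subsequence (which I relabel as the whole sequence). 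I would organize the proof around the ultralimit / Gromov--Hausdorff description of $T$, since in that picture a point of $T$ is \emph{by definition} an equivalence class of sequences $(x_n)_{n<\omega}$ with $x_n\in\mathcal{X}_\Gamma$ and $\hat d_n(x_n,*_n)$ bounded, two sequences being identified when $\hat d_n(x_n,x_n')\to 0$.

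First I would fix, once and for all, the concrete model of $T$ as such a space of sequences (this is where I either cite the Bestvina--Paulin construction as presented just above, or recall the one-line ultrafilter definition). With this model in hand, the first bullet is immediate: given $x\in T$, choose any representing sequence $(x_n)_{n<\omega}$; then for every $g\in G$ we have $g\cdot *_n$ is itself an approximating sequence for $g\cdot *$ (the basepoint is moved by the isometric $G$-action on $\mathcal{X}_\Gamma$), and the definition of the limit metric on sequence-classes gives exactly $\hat d_n(x_n, g\cdot *_n)\to d_T(x, g\cdot *)$. The second bullet, that any two approximating sequences for the same $x$ are asymptotic, is the statement that the map (sequence-class) $\to T$ is injective with the stated equivalence relation, so it reduces to showing that a sequence satisfying $\hat d_n(x_n,g\cdot *_n)\to d_T(x,g\cdot *)$ for all $g$ must be asymptotic to the chosen representative; this follows from the fact that the orbit $G\cdot *$ has dense image enough to ``coordinatize'' $T$ — more precisely, one uses that distances to a dense (or at least suitably spanning) set of points determine a point in an $\R$-tree, together with a diagonal argument to pass from the countable orbit to arbitrary points of $T$.

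The third bullet is pure equivariance: if $(x_n)$ approximates $x$, then since $G$ acts on each $\mathcal{X}_\Gamma$ by isometries for $\hat d_n$, we get $\hat d_n(g\cdot x_n, h\cdot *_n) = \hat d_n(x_n, g^{-1}h\cdot *_n)\to d_T(x, g^{-1}h\cdot *) = d_T(g\cdot x, h\cdot *)$ for every $h\in G$, which says exactly that $(g\cdot x_n)$ approximates $g\cdot x$. The fourth bullet, $\hat d_n(x_n,y_n)\to d_T(x,y)$ for approximating sequences of $x$ and $y$, is the defining property of the limit metric on the space of sequence-classes; if instead one starts from an abstract characterization of $T$ as ``the'' equivariant GH-limit, this is the content of GH-convergence for pairs of points and I would deduce it by the standard argument that GH-distance controls all pairwise distances, again with a diagonal extraction to handle points not on the orbit.

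The main obstacle is the second bullet — the \emph{uniqueness} of approximating sequences up to asymptotic equivalence. The other three are either definitional or formal equivariance manipulations. For uniqueness I expect to need: (a) the orbit $G\cdot *$, while possibly not dense in $T$, ``detects'' points in the sense that $d_T(x,\cdot)$ restricted to $G\cdot *$ determines $x$ — this holds in an $\R$-tree because $x$ lies on the geodesic between suitable orbit points or is a limit of such, using minimality of the action from Lemma \ref{LimitAction}(1) together with the $\R$-tree structure; and (b) a diagonalization to upgrade ``$\hat d_n(x_n,g\cdot *_n)$ converges correctly for each fixed $g$'' (countably many conditions) to genuine asymptotic equivalence $\hat d_n(x_n,x_n')\to 0$. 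Concretely, given $\varepsilon>0$, pick finitely many orbit points whose pairwise geodesics $\varepsilon$-approximate $x$ (possible in an $\R$-tree); for $n$ large both $x_n$ and $x_n'$ are within $2\varepsilon$ of the corresponding approximate configuration in $\mathcal{X}_\Gamma$ because $\mathcal{X}_\Gamma$ with $\hat d_n$ is close to a tree on bounded regions (this is precisely the $\delta_n/\lambda_n\to 0$ thinness that makes the limit an $\R$-tree); hence $\hat d_n(x_n,x_n')< C\varepsilon$ eventually, and letting $\varepsilon\to 0$ with a diagonal choice of $n$ finishes it. I would present this last point with moderate care, since it is the only place where the hyperbolicity constant $\delta$ of $\Gamma$ and the divergence $\lambda_n\to\infty$ genuinely enter, and flag that everything else is formal.
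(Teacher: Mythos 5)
The paper does not actually prove this lemma: it is stated without proof as part of the standard output of the Bestvina--Paulin convergence machinery set up just before it (the paper takes Lemma \ref{LimitAction} and the approximation properties as known facts about equivariant Gromov--Hausdorff limits), so there is no in-paper argument to compare yours against. Your route --- realizing $T$ as classes of sequences $(x_n)$ with $\hat d_n(x_n,*_n)$ bounded, getting the first, third and fourth bullets essentially by definition and equivariance, and reducing everything nontrivial to the uniqueness bullet via ``distances to orbit points determine a point'' plus $\delta_n/\lambda_n\to 0$ thinness and a diagonal extraction --- is the standard proof and is essentially correct. Two small points deserve care. First, with ordinary (non-ultrafilter) convergence the space of bounded sequences modulo asymptotic equivalence is not automatically a metric space, since pairwise rescaled distances need not converge; you must either work with a nonprincipal ultrafilter and then argue that along the already-extracted convergent subsequence the ultralimits agree with genuine limits, or define $T$ as the closure of the union of limit segments between orbit points and construct approximating sequences for an arbitrary $x\in T$ by hand (points on geodesics $[g\cdot *_n,h\cdot *_n]$ at the right parameter, then diagonalize over a countable exhaustion of $G$ and over $\varepsilon\to 0$); your sketch gestures at both but should commit to one. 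Second, your appeal to ``minimality of the action from Lemma \ref{LimitAction}(1)'' is a misattribution --- item (1) only asserts non-triviality --- but what you actually need is that $T$ is spanned by the orbit $G\cdot *$ (i.e.\ is the closed convex hull of the orbit), which is how the limit tree is produced in the construction, so the argument for the uniqueness bullet goes through as you describe.
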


\begin{definition}[Sela] 
A group $L$ is a limit group if it can be obtained as the quotient of a finitely generated group 
$G$ by $Ker\lambda$ where $\lambda$ is an action of $G$ on a real tree obtained as in Lemma \ref{LimitAction}. 
\end{definition}

Among other properties of limit groups Sela proved (see \cite[Lemma 1.4(ii)]{Sel7}).

\begin{proposition}
Let $L$ be a freely indecomposable limit group. Then $L$ admits a cyclic $JSJ$ decomposition.  
\end{proposition}

A cyclic $JSJ$ decomposition can be thought of as a ``universal'' cyclic splitting that describes all cyclic splittings of the limit group $L$. 
The vertex groups of this decomposition can be partitioned to {\em rigid} ones, i.e. those that can be conjugated in a vertex group in any cyclic splitting 
of $L$, and {\em nonrigid}. The nonrigid vertex groups are either free abelian or surface type. 

\begin{proposition}
Let $L$ be a limit group. Then $L$ is CSA (i.e. every maximal abelian subgroup of $L$ is malnormal).
\end{proposition}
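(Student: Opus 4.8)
The plan is to reduce to the fact that the hyperbolic group $\Gamma$ itself is CSA. Recall that a group $A$ is CSA precisely when it satisfies the two universal sentences
$$\forall x\,\forall y\,\forall z\;\big(x\neq 1\wedge[y,x]=1\wedge[x,z]=1\ \rightarrow\ [y,z]=1\big)$$
$$\forall x\,\forall y\;\big(x\neq 1\wedge[x,x^{y}]=1\ \rightarrow\ [x,y]=1\big),$$
the first being commutative transitivity and the second saying that an element conjugating a non-trivial member of a maximal abelian subgroup back into that subgroup must lie in it. I would first record this reformulation (an elementary exercise) and the fact that a torsion-free hyperbolic group satisfies both: the centraliser of a non-trivial element of $\Gamma$ is infinite cyclic (it contains the cyclic group generated by that element with finite index, and $\Gamma$ is torsion-free), so maximal abelian subgroups of $\Gamma$ are maximal cyclic, and their normalisers coincide with themselves --- whence malnormality. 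It then suffices to propagate the two sentences above to $L$; both are handled by the same analysis, so I describe it for commutative transitivity.

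To prove commutative transitivity for $L=G/\Ker\lambda$ I would work with its action on the real tree $T$ supplied by Lemma \ref{LimitAction}, whose defining features are that tripod stabilisers are trivial, arc stabilisers are abelian, and the action is super-stable. Let $b\in L$ be non-trivial with $[a,b]=[c,b]=1$. The centraliser of $b$ preserves the characteristic subtree of $b$ --- the axis $\Ax(b)$ if $b$ is hyperbolic, the fixed subtree $\Fix(b)$ if $b$ is elliptic --- so $a$ and $c$ both act on a common invariant subtree $Y$, and since $b$ has infinite order no element can invert $\Ax(b)$, so on $Y$ each of $a,c$ is either a translation of a line or fixes $Y$ pointwise. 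The task is to see that these actions assemble to commuting elements of $L$: super-stability identifies the pointwise stabiliser of every non-degenerate arc of $Y$ with one and the same abelian subgroup $N$, triviality of tripod stabilisers forbids the branching off $Y$ that would let elements with a common axis disagree, and one checks that $N$ is central in the setwise stabiliser of $Y$; together these force $[a,c]=1$. The elliptic case requires the extra step of ruling out $\Fix(b)$ being a single point, again via triviality of tripod stabilisers in the presence of a second commuting element.

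The main obstacle is precisely this bundle of $\R$-tree arguments, most sharply the passage from ``$a$ and $c$ translate the common axis of $b$, hence $[a,c]$ fixes that axis pointwise'' to ``$[a,c]=1$ in $L$''; this is where super-stability is indispensable, since it collapses the family of arc stabilisers to the single abelian group $N$ that one can then show is central where needed. A shorter alternative route exists if one is willing to invoke that $\Gamma$-limit groups are finitely generated and fully residually $\Gamma$ (equivalently, embed into an ultrapower of $\Gamma$): as CSA is universally axiomatised and $\Gamma$ is CSA, one merely pushes any tuple of $L$ witnessing a failure through a homomorphism $L\to\Gamma$ that is injective on the finitely many non-trivial elements involved and reaches a contradiction inside $\Gamma$. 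I would give the tree argument as the primary proof, since it does not presuppose full residual finiteness, and mention the residual route as a consistency check.
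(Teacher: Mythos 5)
First, a point of comparison: the paper offers no proof of this Proposition at all --- it is quoted from Sela (\cite[Lemma 1.4(ii)]{Sel7}) --- so the comparison is with the standard argument, which is the limiting one you mention only in passing. Your primary $\R$-tree route has a genuine gap. The properties supplied by Lemma \ref{LimitAction} constrain only stabilizers of non-degenerate arcs and tripods; they say nothing whatsoever about point stabilizers, and therefore cannot by themselves imply that $L$ is CSA. Concretely, $(\F_2\times\Z)*\Z$ acts minimally, non-trivially and faithfully on the Bass--Serre tree of this free product with trivial arc stabilizers, so arc stabilizers are abelian, tripod stabilizers are trivial, and super-stability holds vacuously; yet the group is not CSA, indeed not even commutative transitive, since the centralizer of the central element of the first factor is the non-abelian group $\F_2\times\Z$. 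In your sketch the problem surfaces exactly at the step you propose to wave through: when $b$ is elliptic with $\Fix(b)$ a single point --- a case that cannot be ruled out, because triviality of tripod stabilizers gives no information when no non-degenerate arc is fixed --- the elements $a,c$ merely share that fixed point, the entire configuration sits inside a point stabilizer, and the tree sees nothing. Even in the hyperbolic case, the passage from ``$[a,c]$ fixes $\Ax(b)$ pointwise'' to ``$[a,c]=1$'' is asserted (``one checks that $N$ is central'') rather than derived from the stated hypotheses. An honest tree proof would require an induction on point stabilizers, i.e.\ machinery well beyond Lemma \ref{LimitAction}.

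By contrast, your ``alternative route'' is essentially the correct (and the cited) proof, but it needs less than you invoke: that $\Gamma$-limit groups are fully residually $\Gamma$ is a deep theorem and is not required. What the construction gives directly is that, after passing to the convergent subsequence, an element of $G$ becomes trivial in $L$ exactly when its images under the $h_n$ are eventually trivial (this is the stable-kernel formulation under which Sela's cited lemma is set up); hence $L$ embeds in an ultrapower of $\Gamma$ and inherits every universal sentence true in $\Gamma$ --- in particular the two axioms you wrote down, using that a torsion-free hyperbolic group is CSA. So the argument you present as a consistency check should be the proof, and the $\R$-tree argument as sketched cannot be completed from the stated properties of the limit action alone.
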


\subsection{Graded limit groups}

Graded limit groups occur naturally in the understanding of how a solution set of 
a system of equations in $\F$ changes as the parameters of 
the system change (see \cite[Section 10]{Sel1}). We will be interested in a particular subclass of graded limit groups, 
namely {\em solid limit groups}.

A graded limit group is a limit group together with a fixed finitely generated subgroup. Any limit group can be seen as graded limit group after choosing a finitely generated subgroup of it. 
As a matter of fact, one can see limit groups as special cases of graded limit groups where the chosen subgroup is 
the trivial one.

In order to define solid limit groups, we first need to present the class of (graded) short morphisms.

\begin{definition}
Let $G$ be a limit group which is freely indecomposable with respect to a finitely generated subgroup $H$. 
We fix a finite generating set, $\Sigma$ for $G$ and a basis $\bar{a}$ for $\F:=\F(\bar{a})$.  
A morphism $h:G\rightarrow \F$ is short with respect to $Mod_H(G)$ if for every $\sigma\in Mod_H(G)$ and 
every $g\in \F$ that commutes with $h(H)$ we have that $max_{s\in\Sigma}\abs{h(s)}_{\F}\leq max_{s\in\Sigma}\abs{Conj(g)\circ h\circ \sigma}_{\F}$.
\end{definition}

Before the next definition we recall that a sequence of morphisms $h_n:G\rightarrow\F$ is {\em convergent} if 
for every $g\in G$ there exists $n\in \N$ such that for every $m\geq n$, either the image of $g$ under $h_m$ is trivial, or non trivial. 
The {\em stable kernel} of a convergent sequence $\Ker h_n$ is defined as the subgroup of elements of $G$ that are eventually trivial under $h_n$. 
    
\begin{definition}
Let $G$ be a limit group which is freely indecomposable with respect to a finitely generated subgroup $H$. 
Let $(h_n)_{n<\omega}:G\rightarrow \F$ be a convergent sequence of short morphisms with respect to $Mod_H(G)$. 
Then we call $G/\Ker h_n$ a shortening quotient of $G$ with respect to $H$.
\end{definition}

\begin{definition}[Solid limit group]
Let $S$ be a limit group and $H$ be a finitely generated subgroup of $S$. 
Suppose $S$ is freely indecomposable with respect to $H$. Then 
$S$ is solid with respect to $H$ if there exists a shortening quotient  
of $S$ with respect to $H$ which is isomorphic to $S$.
\end{definition}

\begin{definition}
Let $Sld$ be a solid limit group with respect to a finitely generated subgroup $H$ with generating set $\Sigma_H$. 
Let $(h_n)_{n<\omega}:Sld\rightarrow \F$. We call $(h_n)_{n<\omega}$ a flexible sequence if for every $n$, either: 
\begin{itemize}
 \item the morphism $h_n=h'_n\circ\eta_n$, where $\eta_n:Sld\twoheadrightarrow \F*\Gamma$ for some group $\Gamma$, 
 $H$ is mapped onto $\F$ by $\eta_n$, $h'_n:\F*\Gamma\rightarrow\F$ stays the identity on $\F$, and 
 $\eta_n$ is short with respect to $Mod_H(Sld)$; or   
 \item the morphism $h_n$ is short with respect to $Mod_H(Sld)$ and moreover 
 $$max_{g\in B_n}\abs{h_n(g)}_{\F}> 2^n(1+max_{s\in\Sigma_H}\abs{h_n(s)}_{\F})$$ 
 where $B_n$ is the ball of radius $n$ in the Cayley graph of $Sld$.
\end{itemize}
If $(h_n)_{n<\omega}:Sld\rightarrow\F$ is a convergent flexible sequence, then we call $Sld/\Ker h_n$ a flexible quotient of $Sld$.
\end{definition}

Flexible quotients of a solid limit group are proper (see \cite[Lemma 10.4(ii)]{Sel1}). Moreover one can 
define a partial order and an equivalence relation on the class of flexible quotients of a solid limit group. 
Let $Sld$ be a solid limit group with respect to a finitely generated subgroup $H$ 
and $\eta_i:Sld\twoheadrightarrow Q_i$ for $i\leq 2$ be flexible quotients with their canonical quotient maps. 
Then $Q_2\leq Q_1$ if $ker\eta_1\subseteq ker\eta_2$. And $Q_1\sim Q_2$ if there exists $\sigma\in Mod_H(Sld)$ 
such that $ker(\eta_1\circ\sigma)=ker\eta_2$.

\begin{theorem}[Sela]
Let $Sld$ be a solid limit group with respect to a finitely generated subgroup $H$. Assume that $Sld$ admits a flexible quotient. Then there exist  
finitely many classes of maximal flexible quotients. 
\end{theorem}

A morphism from a solid limit group to a free group that factors through one of the maximal flexible quotients 
(after precomposition by a modular automorphism) is called {\em flexible}. A morphism which is not flexible is called {\em solid}.

In \cite[Theorem 2.9]{Sel3} Sela proved:

\begin{theorem}\label{BoundSolid}
Suppose $Sld$ is a solid limit group with respect to a finitely generated subgroup $H$. 
Then there exists a natural number $n$ such that any morphism from $H$ to $\F$ has at most $n$ extensions to 
strictly solid morphisms from $Sld$ to $\F$ that belong to different strictly solid families.
\end{theorem}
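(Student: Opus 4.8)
The plan is to argue by contradiction via a compactness/limiting argument in the spirit of Lemma \ref{LimitAction}, reducing the statement to a property of a $\Gamma$-limit group that will contradict the freeness of the relevant action obtained from Rips' machine. Suppose no such bound $n$ exists. Then for each $k<\omega$ there is a morphism $\eta_k : H \to \Gamma$ admitting at least $k$ extensions $h_k^{(1)},\ldots,h_k^{(k)} : L \to \Gamma$ that are strictly solid and pairwise belong to distinct strictly solid families. First I would fix, for each $k$, an index $j(k)$ and consider the ``diagonal'' sequence of morphisms $L \to \Gamma$ together with the extra data recording which family each belongs to; the key point is that the failure of the bound forces, after passing to a subsequence, a sequence of morphisms $(f_n)_{n<\omega} : L \to \Gamma$ that are pairwise non-conjugate (since distinct strictly solid families are, in particular, not related by conjugation) and whose restrictions to $H$ eventually stabilize or at least converge in a controlled way.

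Next I would feed this sequence into the Bestvina--Paulin machinery: by Lemma \ref{LimitAction} a subsequence of $(\mathcal{X}_\Gamma, *_n)$ converges to a real $L$-tree $(T,\lambda)$ with $\lambda$ non-trivial, tripod stabilizers trivial, arc stabilizers abelian, and the action super-stable. Let $L' = L/\Ker\lambda$ be the associated $\Gamma$-limit group quotient. The crucial structural input is that, because all the $f_n$ restrict to (essentially) the same morphism $\eta$ on $H$ up to conjugacy, the subgroup $H$ (its image in $L'$) is \emph{elliptic} in $T$ --- the rescaled displacements of a fixed finite generating set of $H$ stay bounded, hence go to $0$ after rescaling, so $H$ fixes a point of $T$. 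This means the action $\lambda$ refines the graded structure: running Rips' machine (Theorem \ref{RipsMachine}) on $L' \curvearrowright T$ yields a decomposition as a graph of actions in which $H$ sits inside a vertex group, and one extracts a \emph{graded} modular automorphism of $L$ (or of the solid group $S$ covering it) --- a Dehn twist or a surface/axial shortening move supported away from $H$ --- that is non-trivial on $L$ but trivial on $H$.

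The final step is to derive the contradiction from solidity. By definition of a solid $\Gamma$-limit group $L_H$ (being isomorphic to its own graded shortening quotient, so in particular freely indecomposable rel $H$), the graded modular group $\Mod_H(L)$ acts on $L$ in a way that is already ``shortest''; the existence of a non-trivial graded modular automorphism fixing $H$ and strictly shortening --- or, more precisely, the existence of infinitely many pairwise non-conjugate morphisms agreeing on $H$ --- contradicts the characterization of solidity, because strictly solid families are by construction the finitely many ``rigid'' pieces surviving after the graded shortening procedure. Concretely, I would invoke that a solid $\Gamma$-limit group admits no essential graded splitting rel $H$ of the type Rips' machine would produce here (simplicial edges with non-trivial abelian stabilizer, surface or axial pieces), so the limit action $\lambda$ must be trivial --- contradicting item (1) of Lemma \ref{LimitAction}.

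I expect the main obstacle to be the middle step: correctly organizing the combinatorial data of ``strictly solid families'' so that distinctness of families genuinely produces the non-conjugacy and non-equivalence needed to run Bestvina--Paulin, and then ensuring that the tree one obtains records a \emph{graded} (rel $H$) splitting rather than an ungraded one --- i.e. that $H$ really is elliptic and that the modular automorphism extracted from Rips' machine lies in $\Mod_H(L)$. Making the ellipticity of $H$ precise (it requires that the chosen extensions $h_k^{(j)}$ do not blow up the generators of $H$ relative to those of $L$, which may need an initial normalization of the $\eta_k$ by conjugation and a pigeonhole on conjugacy classes of $\eta_k$ in $\Gamma$, using that $\Gamma$ is hyperbolic so short conjugacy representatives exist) is where the real care is needed; the rest is a fairly standard application of the tools assembled above.
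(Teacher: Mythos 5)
First, a point of reference: the paper does not prove this statement at all. It is quoted verbatim as a black box from Sela (\cite[Theorem 3.6]{Sel7}; in the free group case the analogous statements are Theorems 2.5 and 2.9 of \cite{Sel3}), and the surrounding text of the paper only explains how the bound is used later. So there is no ``paper proof'' to match your sketch against; the question is whether your sketch could stand on its own, and it cannot.

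The central gap is the uniformity of the bound $n$ over \emph{all} morphisms $H\to\Gamma$, which is exactly what your compactness set-up loses. In your contradiction sequence the base morphisms $\eta_k:H\to\Gamma$ are completely unrelated for different $k$, so there is no reason whatsoever that ``the restrictions to $H$ eventually stabilize or converge in a controlled way'', and hence no reason that $H$ is elliptic in the Bestvina--Paulin limit tree: after rescaling by the maximal displacement of a generating set of $L$, the generators of $H$ may well have displacements comparable to the scaling factor, since the $\eta_k$ themselves blow up. An argument of the kind you propose does work if one \emph{fixes} a single $\eta$ and assumes it has infinitely many strictly solid (or rigid) extensions in distinct families --- then boundedness of $\eta$ forces ellipticity of $H$ and one can shorten rel $H$ --- but that only gives finiteness for each fixed parameter value, not a bound independent of $\eta$, which is the entire content of the theorem and the reason Sela's proof is a long multi-step induction on the graded structure rather than one pass of Bestvina--Paulin plus Rips' machine.

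The final step is also based on a false premise. You assert that ``a solid $\Gamma$-limit group admits no essential graded splitting rel $H$''; that is the definition of a \emph{rigid} limit group, not a solid one --- the paper itself points out (Remark after the definition of solid groups) that rigid limit groups are precisely the solid ones with trivial graded JSJ, so a general solid group has a nontrivial graded abelian JSJ decomposition and plenty of graded modular automorphisms. Relatedly, strictly solid morphisms are not short morphisms, so exhibiting a nontrivial element of $\Mod_H(L)$, or a shortening move, contradicts nothing by itself: strict solidity is a condition about not being equivalent, via the graded modular group and centralizer twists, to morphisms that factor through the flexible (shortening) quotients, and distinct strictly solid families are separated by that equivalence relation, not merely by non-conjugacy in $\Gamma$. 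Turning ``infinitely many families over varying parameters'' into a violation of this condition is precisely the hard combinatorial core of Sela's proof, and it is missing from the sketch.
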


In order to define a strictly solid morphism and a strictly solid family we will make use of the relative $JSJ$ decomposition of a solid limit group $Sld$  
with respect to its distinguished subgroup $H$. 

\begin{definition}
Let $Sld$ be a solid limit group with respect to a finitely generated subgroup $H$. Let $JSJ_H(Sld)$ be the relative 
$JSJ$ decomposition of $Sld$ with respect to $H$. 

Let $h:Sld\rightarrow \F$ be a morphism. Then $g:Sld\rightarrow\F$ is $JSJ_H(Sld)$-equivalent to $h$ if it agrees with $h$ in the rigid vertex group that contains $H$ 
and it differs in every other rigid group only by a conjugation.  
\end{definition}

The above defined relation is clearly an equivalence relation on the set of morphisms from a solid limit group to a fixed nonabelian free group.

A morphism $h:Sld\rightarrow\F$ is {\em striclty solid} if it is not $JSJ_H(Sld)$-equivalent to a flexible morphism. The equivalent class of 
a strictly solid morphism is called a {\em strictly solid family}.

\section{Towers and test sequences}\label{TSFT}

{\em Towers} or more precisely groups that have the structure of an $\omega$-residually free tower (in Sela's terminology), were 
introduced in \cite[Definition 6.1]{Sel1} and they provide examples of groups for which Merzlyakov's 
theorem (see \cite{Mer}) naturally generalizes (up to some tuning).  
 
After defining towers we will be interested in {\em test sequences} defined on them. These are sequences of morphisms 
from groups that have the structure of a tower to nonabelian free groups. Test sequences have been used extensively 
through out the work of Sela on Tarski's problem.  
As their formal definition is highly technical we will only abstract, in a series of facts, 
all the essential properties  needed for our purposes.

\subsection{Towers}

We will start this subsection by defining the main building blocks of a tower, 
namely free abelian flats and surface flats.

\begin{definition}[Free abelian flat]
Let $G$ be a group and $H$ be a subgroup of $G$. Then $G$ has the structure of a free abelian flat over $H$,  
if $G$ is the amalgamated free product $H*_A(A\oplus \Z^l)$ where $A$, called the peg, is a maximal abelian subgroup of $H$.
\end{definition}
  
Before moving to the definition of a hyperbolic floor we recall that if $H$ is a subgroup of a group $G$ then a morphism $r:G\to H$ 
is called a {\em retraction} if $r$ is the identity on $H$.

\begin{definition}[Hyperbolic floor]
Let $G$ be a group and $H$ be a subgroup of $G$. Then $G$ has the structure of a hyperbolic floor over $H$,  
if $G$ acts minimally and nontrivially on a tree $T$ so that the action admits a Bass-Serre presentation 
$(T^1, T^0, \{\gamma_e\})$ that satisfies the following conditions:
\begin{itemize}
\item there is exactly one vertex $w$ in $T^0$ which is not of surface type with $Stab_G(w)=H$; 
\item the subtree $T^1$ is bipartite between vertices in the orbit of $w$  and surface type vertices; 
\item either there exists a retraction $r:G\to H$ that sends every surface type vertex group to a non abelian image  
or $H$ is cyclic and there exists a retraction $r': G * \Z \to H * \Z$ with the same property.
\end{itemize}
\end{definition}

\begin{figure}[ht!]
\centering
\includegraphics[width=.7\textwidth]{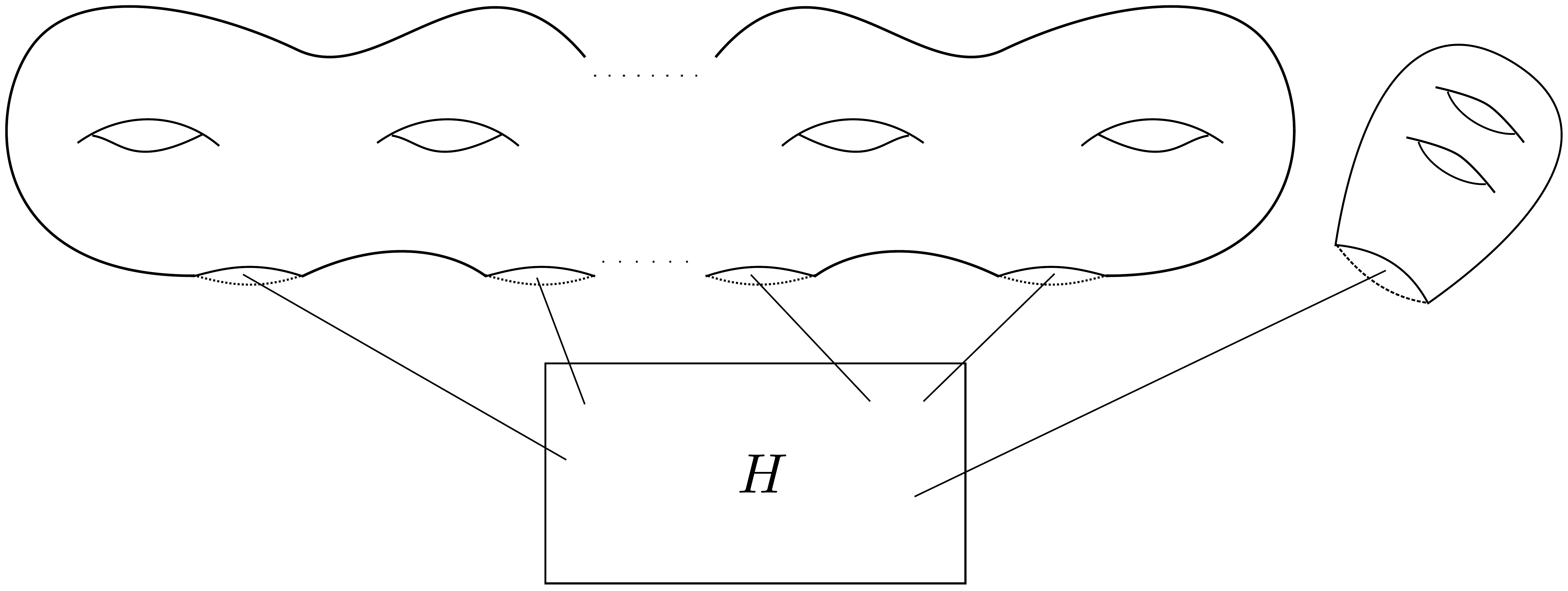}
\caption{A graph of groups corresponding to a hyperbolic floor}\label{Fig1}
\end{figure}

If a group has the structure of a hyperbolic floor (over some subgroup), and the corresponding Bass-Serre presentation 
contains just one surface type vertex then we call the hyperbolic floor {\em a surface flat}. 

The following lemma is immediate.

\begin{lemma}
Suppose $G$ has a hyperbolic floor structure over $H$. Then there exists a finite sequence 
$$G:=G^m>G^{m-1}>\ldots >G^0:=H$$
such that, for each $i<m$, $G^{i+1}$ has a surface flat structure over $G^i$. 
\end{lemma}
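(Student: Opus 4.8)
The plan is to peel off one surface type vertex at a time from the hyperbolic floor structure of $G$ over $H$, showing at each step that what remains is again a hyperbolic floor (in fact a surface flat) over a slightly larger subgroup than $H$. Concretely, let $(T^1,T^0,\{\gamma_e\})$ be the Bass-Serre presentation witnessing the hyperbolic floor structure, with $V_1=\{v_1,\ldots,v_k\}$ the surface type vertices. I would induct on $k$. If $k=1$ then $G$ is already a surface flat over $H$ and there is nothing to do, so the sequence is $G^1:=G>G^0:=H$. For the inductive step, pick one surface type vertex, say $v_k$, and consider the decomposition of $G$ as an amalgam/HNN extension along the edges of $T^1$ adjacent to $v_k$: collapsing every edge of $T^1$ not adjacent to $v_k$ yields a splitting of $G$ in which $v_k$ is one vertex with its surface group $\Stab_G(v_k)=\pi_1(\Sigma_k)$, and the complementary side is the fundamental group of the sub-graph-of-groups obtained by deleting $v_k$ and its adjacent edges. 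Call this complementary group $G^{m-1}$.

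Next I would verify the two things needed. First, that $G$ is a surface flat over $G^{m-1}$: the collapsed tree gives a Bass-Serre presentation with a single surface type vertex $v_k$ (its edge stabilizers still embed as maximal boundary subgroups of $\pi_1(\Sigma_k)$ and the correspondence with boundary components is unaffected by the collapse), the bipartite condition is automatic with $V_1=\{v_k\}$, and $G^{m-1}$ is by construction the free product of the stabilizers of the other vertices — the retraction $r:G\to H$ restricts (or rather projects) to a retraction $G\to G^{m-1}$ sending $\Stab_G(v_k)$ to a non abelian image, since $r$ already did so and the images of the other surface vertex groups lie in $G^{m-1}$ (note $H\leq G^{m-1}$, so the auxiliary cyclic case, if it occurred, is handled the same way by the retraction $G*\Z\to H*\Z$ whose image contains $G^{m-1}$). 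Second, that $G^{m-1}$ itself has a hyperbolic floor structure over $H$ with surface type vertices $V_1\setminus\{v_k\}$: the relevant Bass-Serre presentation is just the restriction of $(T^1,T^0,\{\gamma_e\})$ to the subtrees $T^1\setminus\{v_k, \text{adjacent edges}\}$ and $T^0\setminus\{v_k\}$; all bullet points are inherited, and the retraction is $r|_{G^{m-1}}$ (which lands in $H$ and is the identity there), with the non-abelian-image condition holding for the remaining surface vertices because it held for them in $G$. Then by induction $G^{m-1}$ sits in a chain $G^{m-1}>G^{m-2}>\cdots>G^0=H$ of surface flats, and prepending $G:=G^m>G^{m-1}$ finishes the proof.

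The main obstacle I anticipate is purely bookkeeping in Bass-Serre theory: checking carefully that collapsing the non-$v_k$ edges really does produce a legitimate surface flat presentation, i.e. that after the collapse the edge groups adjacent to $v_k$ are still maximal boundary subgroups of $\pi_1(\Sigma_k)$ and still in bijection with the boundary components (this is where one uses that $T^1$ was bipartite between $V_1$ and its complement, so no two surface vertices were adjacent and the collapse does not merge boundary subgroups), and that the complementary group $G^{m-1}$ is genuinely a free product of the remaining vertex stabilizers rather than acquiring extra amalgamation — again guaranteed by bipartiteness, since the only edges joining the two halves of $T^1$ were those adjacent to surface vertices. One also has to confirm that the retraction behaves well under the collapse and restriction, but this is routine since a retraction onto $H$ automatically restricts to a retraction onto any intermediate subgroup through which it factors, and $H\leq G^{m-1}\leq G$ with $r(G^{m-1})\subseteq G^{m-1}$ because $r$ kills each $\Stab_G(v_i)$ into $G^{m-1}$ for $i<k$ while fixing everything in the other vertex groups. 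None of these steps is deep; the content is entirely in organizing the splittings correctly, which is why the excerpt calls it ``an easy exercise in Bass-Serre theory.''
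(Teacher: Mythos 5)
Your top-down strategy --- collapse everything away from one surface vertex $v_k$, exhibit $G$ as a surface flat over the subgroup $G^{m-1}$ generated by the complementary pieces, and induct on the number of surface vertices --- is the natural route (the paper leaves the proof as an exercise), and much of the bookkeeping you describe does go through. But there is a genuine gap exactly at the point you dismiss as ``guaranteed by bipartiteness'': the claim that $G^{m-1}$ is the free product of the groups $\pi_1(C_1),\ldots,\pi_1(C_p)$ carried by the components of the graph minus $v_k$. Bipartiteness does not give this, and neither does the surface-type vertex condition. Concretely, let the surface at $v_k$ be a pair of pants with $\pi_1(\Sigma_k)=\langle x,y\rangle$ and maximal boundary subgroups $\langle x\rangle$, $\langle y\rangle$, $\langle (xy)^{-1}\rangle$, each glued along its own edge to a vertex lying in a different component $C_1,C_2,C_3$. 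Writing $x',y',z'$ for the images of $x$, $y$, $(xy)^{-1}$ in the corresponding vertex groups, the edge identifications give $x'y'z'=xy(xy)^{-1}=1$ in $G$, i.e. a reduced length-three word in $\pi_1(C_1)\ast\pi_1(C_2)\ast\pi_1(C_3)$ with nontrivial letters that dies in $G$. So a star-shaped, bipartite decomposition with a perfectly good surface-type vertex can acquire exactly the ``extra amalgamation'' you rule out. What saves the genuine situation is the hyperbolic floor hypothesis itself: one must argue (via the normal form in the collapsed splitting, using that distinct edges at $v_k$ carry maximal boundary subgroups of distinct boundary components, hence with pairwise trivial intersections) that any collapse of a reduced word in the $\pi_1(C_i)$ would force a collapse of a reduced word in the groups $\Stab_G(w)$, $w\in V_2$, contradicting the assumption that $H$ is their free product (in the example above the retraction onto $H$ indeed cannot exist). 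Supplying this argument is the real content of the lemma, and your proof as written does not contain it.

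Two smaller points also need repair. The map you describe as the ``restriction (or rather projection)'' of $r$ is neither: a retraction onto $H$ does not restrict to a retraction onto an intermediate subgroup. What you need is the homomorphism that is the identity on $G^{m-1}$ and agrees with $r$ on $\pi_1(\Sigma_k)$ and on the Bass-Serre elements at $v_k$; its well-definedness is what actually uses bipartiteness (the edge groups at $v_k$ lie in $V_2$-vertex groups, hence in $H$, where $r$ is the identity). In the case where $H$ is cyclic and only $r':G\ast\Z\to H\ast\Z$ is given, the top step still needs an honest retraction $G\to G^{m-1}$ (for $k\geq 2$ the group $G^{m-1}$ is not cyclic), so $r'$ must be converted by choosing an image in $G^{m-1}$ for the extra generator of $\Z$ and checking that the non abelian image condition survives; this is not handled ``the same way'' without comment. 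Finally, when deleting $v_k$ disconnects the quotient graph, merely ``restricting'' the Bass-Serre presentation does not yield a decomposition of $G^{m-1}$ over $H$ of the required connected bipartite form; one should instead merge non-surface vertices from different components into a single vertex whose group is the free product of their stabilizers, which the definition permits but which is not the same as restriction.
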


We use surface flats and free abelian flats in order to define towers.

\begin{definition}\label{Tower}
A group $G$ has the structure of a tower (of height $m$) over a subgroup $H$ if there 
exists a sequence $G=G^m>G^{m-1}>\ldots>G^0>H$, where $G^0:=H*\F$ with $\F$ a finite rank free group and for each $i$, $0< i<m$, one of the following holds:
\begin{itemize}
 \item[(i)] the group $G^{i+1}$ has the structure of a surface flat over $G^i$ in which $H$ is a subgroup of $G^i$;
 \item[(ii)] the group $G^{i+1}$ has the structure of a free abelian flat over $G^i$ in which $H$ is a subgroup of $G^i$ and the peg cannot be 
conjugated into a group that corresponds to an abelian flat of a previous floor .
\end{itemize}

\end{definition}

\begin{figure}[ht!]
\centering
\includegraphics[width=.7\textwidth]{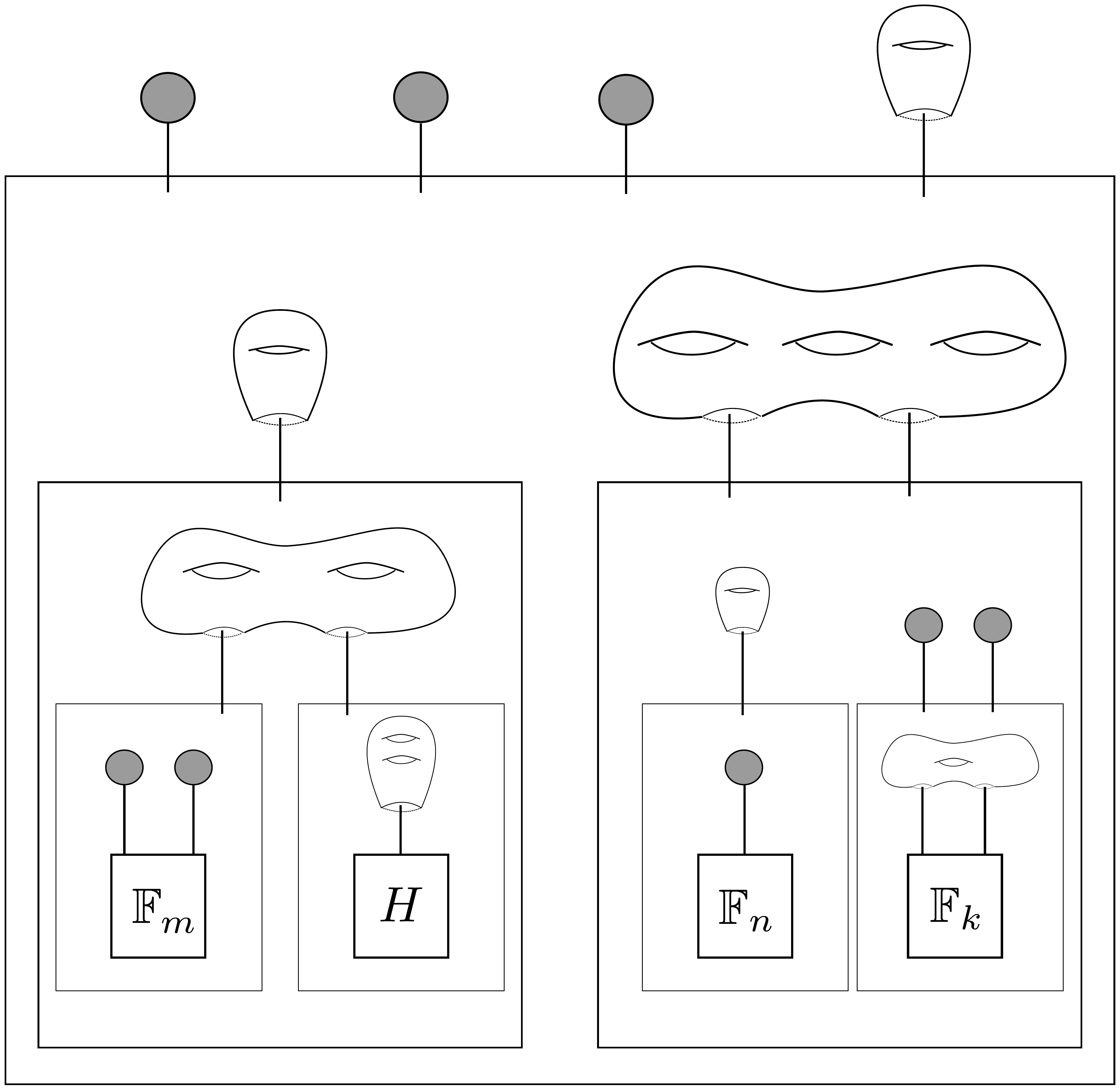}
\caption{A tower over $H$.}\label{FigTower}
\end{figure}

It will be useful to collect the information witnessing that a group admits the structure of a tower. 
Thus, we define:

\begin{definition}
Suppose $G$ has the structure of a tower over $H$ (of height $m$). Then the tower (over $H$) corresponding to $G$, 
denoted by $\mathcal{T}(G,H)$, 
is the following collection of data:
$$(G,\mathcal{A}(G^m,G^{m-1}),\mathcal{A}(G^{m-1},G^{m-2}),\ldots, \mathcal{A}(G^1,G^0), H)$$
where by $\mathcal{A}(G^{i+1},G^i)$ we denote the action of $G^{i+1}$ on a tree (together with the Bass-Serre presentation) that witnesses that 
$G^{i+1}$ has one of the forms of Definition \ref{Tower} over $G^i$.
\end{definition}

The height of an element of a group that has the structure of a tower is defined as follows:

\begin{definition}
Suppose $G$ has the structure of a tower $\mathcal{T}:=\mathcal{T}(G,H)$ and $g$ is an element in $G$.  
Then:
\begin{itemize}
 \item $height_{\mathcal{T}}(g)=0$, if $g\in G^0$;
 \item $height_{\mathcal{T}}(g)=l+1$, if $g\in G^{l+1}\setminus G^l$;
\end{itemize}
\end{definition}

The following lemma is an immediate consequence of Theorem 1.30 in \cite{BFNotesOnSela}

\begin{lemma}
Suppose $G$ has the structure of a tower over a limit group $L$. Then $G$ is a limit group.
\end{lemma}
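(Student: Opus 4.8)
The plan is to prove this by induction on the height $m$ of the tower $G = G^m > G^{m-1} > \ldots > G^0 = L$, using the fact that limit groups are closed under the three basic construction steps of Definition \ref{Tower}. Since the base case $m = 0$ is trivial ($G = L$), it suffices to show: if $L'$ is a limit group and $G'$ is obtained from $L'$ by one of the three moves (free product with a free group or a surface group of Euler characteristic $\leq -2$; surface flat; free abelian flat), then $G'$ is again a limit group. The cleanest route is to characterize limit groups not via the Bestvina--Paulin construction directly, but via one of their standard equivalent descriptions --- for instance, a finitely generated group is a limit group if and only if it is fully residually free (every finite subset can be mapped injectively into a free group by a homomorphism), or equivalently it embeds into an ultrapower of a free group, or it is a finitely generated subgroup of an $\omega$-residually free tower group. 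I would invoke whichever of these is most convenient; the residual freeness characterization makes the inductive step most transparent.

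First I would handle step (i): if $L'$ is fully residually free, then so is $L' * F_k$ for a free group $F_k$, since a free product of fully residually free groups is fully residually free (one maps the two factors into free groups respecting the finite subset, then amalgamates the images into a larger free group, using that free products of free groups are free); the same argument applies to $L' * \pi_1(\Sigma)$ once we know $\pi_1(\Sigma)$ is a limit group for a closed surface of Euler characteristic $\leq -2$, which is a classical fact (these surface groups embed in $\mathrm{SL}_2(\mathbb{R})$ and are residually free by work going back to Baumslag). For step (iii), the free abelian flat $G' = L' *_A (A \oplus \mathbb{Z})$ where $A$ is maximal abelian in $L'$: this is precisely the operation of "extending a centralizer," and Kharlampovich--Myasnikov (and Sela) showed that extensions of centralizers of limit groups are limit groups --- I would cite this or reproduce the short argument that such an extension is fully residually free using that $A$ is malnormal/self-centralizing. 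For step (ii), the surface flat, I would use the retraction $r \colon G' \to L'$ (or $r' \colon G' * \mathbb{Z} \to L' * \mathbb{Z}$) that is part of the definition of a hyperbolic floor: composing a discriminating sequence of morphisms $L' \to F$ with $r$ gives morphisms $G' \to F$, and the retraction property together with the non-abelian image condition on the surface vertex group lets one separate any finite subset of $G'$ --- this is essentially the content of the "closure" part of Sela's analysis, and I would phrase it as: a group admitting a hyperbolic floor structure over a limit group is a limit group.

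The main obstacle I anticipate is step (ii), the surface flat case: verifying full residual freeness of $G'$ requires genuinely using the geometry of the retraction $r$ and the fact that the surface vertex group is sent to a non-abelian subgroup, rather than just formal closure properties, and getting the elements whose images under $r$ collapse (elements of the kernel of $r$) to survive under some modified homomorphism needs the standard trick of precomposing with high powers of a Dehn-twist-type automorphism of the surface group before applying a discriminating sequence. This is exactly the technical heart of why $\omega$-residually free towers are limit groups in \cite{Sel1}, so rather than reprove it I would cite the relevant statement (e.g.\ that the class of limit groups is closed under taking hyperbolic floors, or directly that $\omega$-res.\ free tower groups are limit groups) and note that our towers over a limit group $L$ are, after adjoining $L$'s own MR resolution at the bottom, instances of such towers. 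Steps (i) and (iii) are genuinely routine given the residual freeness characterization; the whole proof is then a short induction with the one nontrivial input quoted from the literature.
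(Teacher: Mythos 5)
Your proof is correct and has the same overall shape as the paper's --- an induction over the floors of the tower, reducing to the three moves of Definition \ref{Tower} --- but the key input quoted from the literature differs. The paper's (sketched) proof invokes the equivalence of limit groups with Bestvina--Feighn's Constructible Limit Groups (\cite[Definitions 1.25 and Theorem 1.30]{BFNotesOnSela}) and simply observes that each tower move is, by inspection, one of the defining moves of a CLG; nothing more needs checking, since the retraction with non-abelian surface image in a surface flat and the maximality of the abelian subgroup in an abelian flat are exactly the hypotheses built into the CLG definition. You instead use the fully-residually-free characterization and quote three separate closure facts: free products of limit groups are limit groups (together with the classical fact that closed surface groups of Euler characteristic at most $-2$ are limit groups), extensions of centralizers preserve limit groups (and since limit groups are CSA, the maximal abelian subgroup $A$ in the abelian flat is indeed a centralizer, so this applies), and groups with a hyperbolic floor structure over a limit group are limit groups --- this last being the same technical content (precomposition with Dehn-twist-type modular automorphisms before applying a discriminating sequence) that is packaged inside the QH-vertex step of the CLG machinery, so you are right to cite it rather than reprove it. Your route is more modular and makes the free-product and abelian-flat cases genuinely elementary; the paper's route is shorter because the CLG class was tailor-made for exactly these constructions, so the entire verification collapses to inspection of the definition.
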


\subsection{Test sequences}\label{TS}

\begin{convention}
When we refer to Lemma \ref{LimitAction} we will always choose the sequence of base points to be the sequence of trivial elements. 
\end{convention}

If a group $G$ has the structure of a tower (over $\F$) then a test sequence is a sequence of morphisms from $G$ to $\F$ 
that satisfy certain combinatorial conditions. These conditions depend on the structure of the tower and their description is 
highly complicated (see \cite[p. 222]{Sel2}). 
Thus, we prefer to only give the properties that will be of importance to our purpose.

\begin{fact}[Free product limit action]\label{GroundFloorFact}
Let $\mathcal{T}(G,\F)$ be a limit tower and $(h_n)_{n<\omega}:G\to\F$ 
be a test sequence for $\mathcal{T}(G,\F)$. 

Suppose $G^{0}$ is the free product of $\F$ with a finitely generated free group $\F_l$. 
Then, any subsequence of $(h_n\upharpoonright G^{0})_{n<\omega}$ 
that converges, as in Lemma \ref{LimitAction}, induces a faithful action of $G^{0}$ on a 
based real tree $(Y,*)$, with the following properties:
   \begin{figure}[ht!]
   \centering
   \includegraphics[width=.5\textwidth]{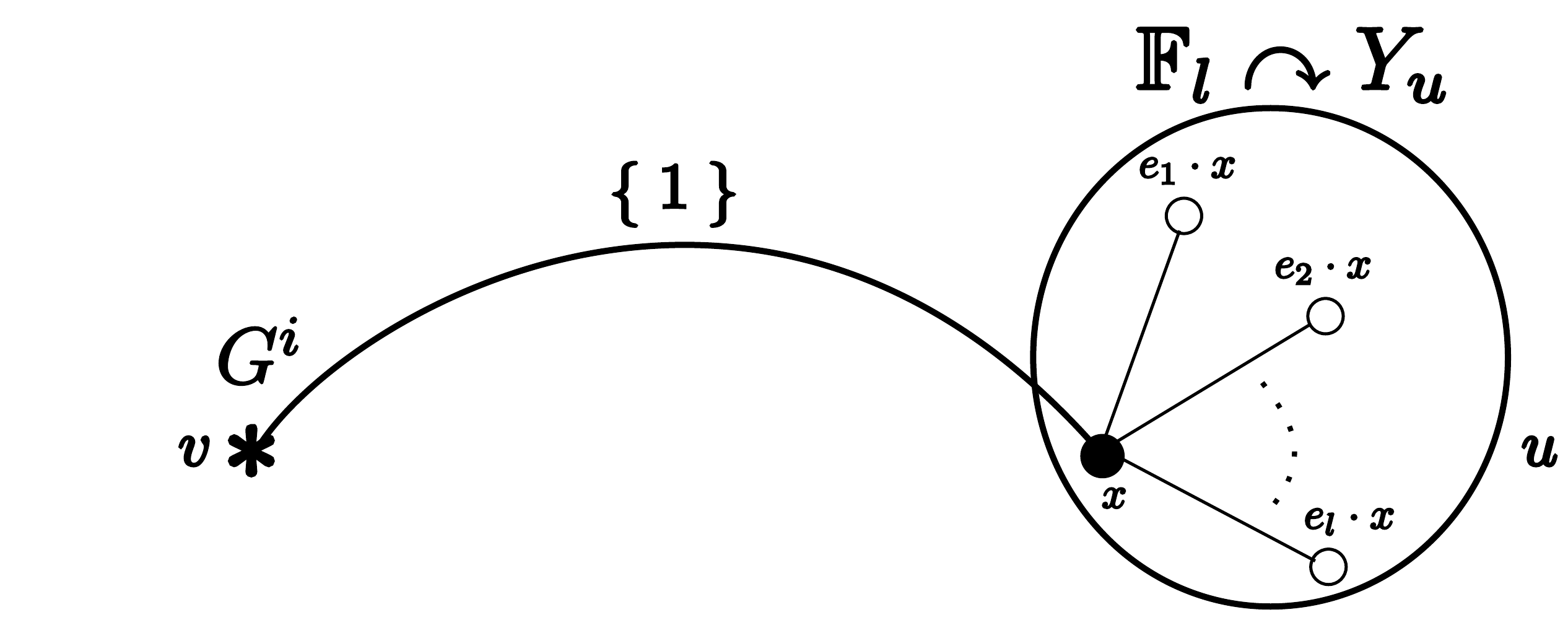}
   \caption{A Bass-Serre presentation in the case of a free group.}\label{Fig4}
   \end{figure}
\begin{itemize}
   \item the action $G^{0}\curvearrowright Y$ decomposes as 
   a graph of actions $(G^{0}\curvearrowright T, \{Y_u\}_{u\in V(T)}$ $,\{ p_e\}_{e\in E(T)})$;
   \item the Bass-Serre presentation for $G^{0}\curvearrowright T$, $(T_1=T_0,T_0)$, is a segment $(u,v)$;
   \item the action $Stab_G(u):=\F_l\curvearrowright Y_u$ is a simplicial type action, its Bass-Serre presentation, 
   $(Y_u^1,Y_u^0,t_1,\ldots,t_l)$ consists of a ``star graph'' $Y_u^1:=\{(x,b_1),\ldots,(x,b_l)\}$ with all of its edges 
   trivially stabilized, a point 
   $Y_u^0=x$ which is trivially stabilized and Bass-Serre elements $t_i=e_i$, for $i\leq l$;
   \item $Y_v$ is a point and $Stab_G(v)$ is $\F$;
   \item the edge $(u,v)$ is trivially stabilized.
 \end{itemize}
\end{fact}

\begin{fact}[Surface flat limit action]\label{SurfaceFact}
Let $\mathcal{T}(G,\F)$ be a limit tower and $(h_n)_{n<\omega}:G\to\F$ 
be a test sequence with respect to $\mathcal{T}(G,\F)$.
  
Suppose $G^{i+1}$ is a surface flat over $G^i$, witnessed by $\mathcal{A}(G^{i+1},G^i)$. 

Then, any subsequence of $(h_n\upharpoonright G^{i+1})_{n<\omega}$ that converges, as in Lemma \ref{LimitAction},  
induces a faithful action of $G^{i+1}$ on a based real tree $(Y,*)$, with the following properties:
   \begin{figure}[ht!]
   \centering
   \includegraphics[width=.7\textwidth]{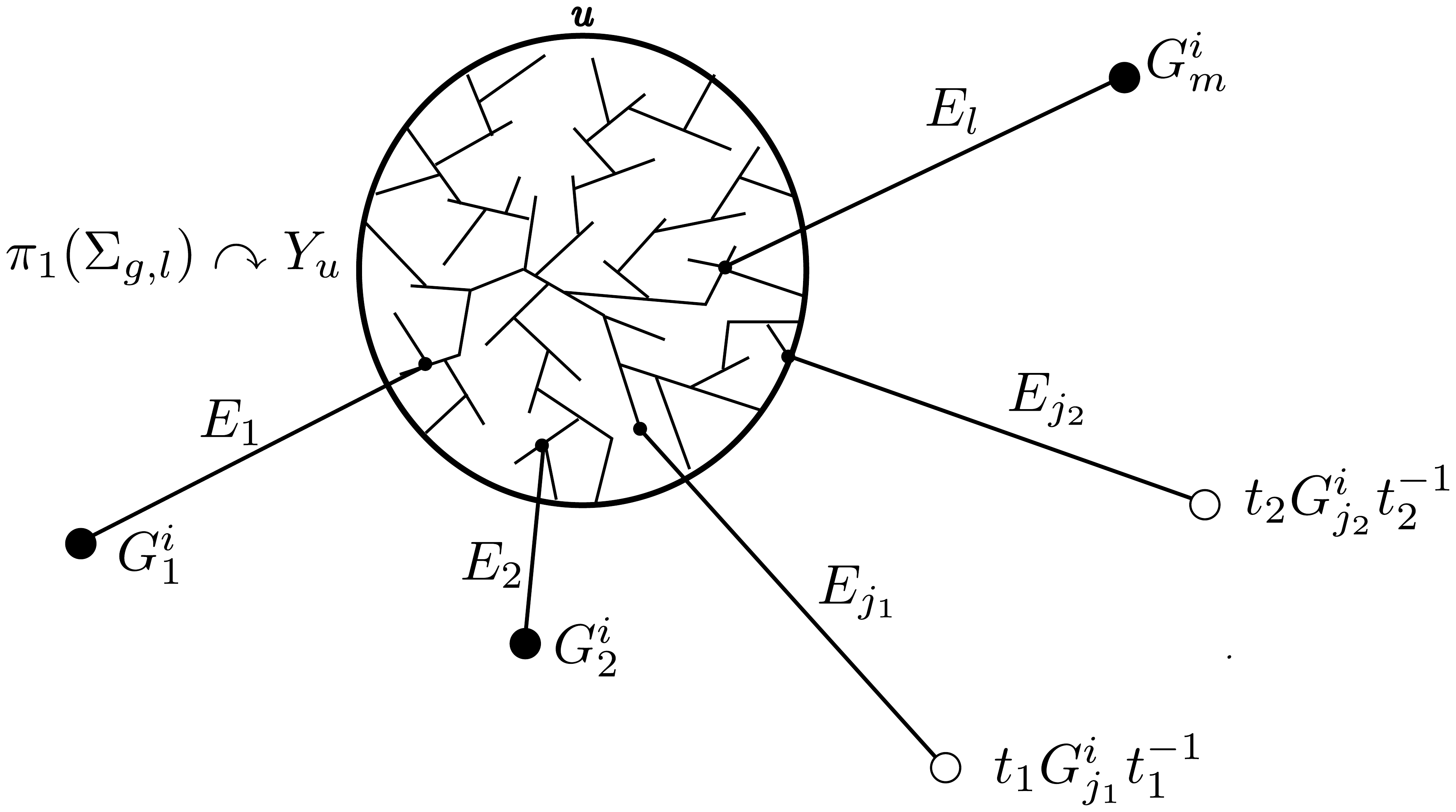}
   \caption{A Bass-Serre presentation for $G^{i+1}\curvearrowright T$, in the case of a surface flat.}\label{Fig5}
   \end{figure}
 \begin{enumerate}
  \item $G^{i+1}\curvearrowright Y$ decomposes as a graph of actions $(G^{i+1}\curvearrowright T, \{Y_u\}_{u\in V(T)}, \{ p_e\}_{e\in E(T)})$, 
  with the action $G^{i+1}\curvearrowright T$ being identical to $\mathcal{A}(G^{i+1},G^i)$; 
  \item the Bass-Serre presentation, $(T^{1},T^{0},\{t_e\})$, for the action of $G^{i+1}$ on $T$, is identical with 
  the Bass-Serre presentation of the surface flat splitting $\mathcal{A}(G^{i+1},G^{i})$;
  \item if $v$ is not a surface type vertex then $Y_v$ is a point stabilized by the corresponding $G^i_j$ for some $j\leq m$;
  \item if $u$ is the surface type vertex, then $Stab_G(u)=\pi_1(\Sigma_{g,l})$ and 
  the action $Stab_G(u)\curvearrowright Y_u$ is a surface type action coming from $\pi_1(\Sigma_{g,l})$;
  \item edge stabilizers and Bass-Serre elements in $(T^{1},T^{0},\{t_e\})$ are as in the Bass-Serre presentation 
  for $\mathcal{A}(G^{i+1},G^{i})$. 
 \end{enumerate}
\end{fact}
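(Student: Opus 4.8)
The plan is to derive the statement from the Bestvina-Paulin construction together with the combinatorial conditions built into the notion of a test sequence, which are tailored precisely so that the limiting $\R$-tree ``remembers'' the tower structure. First I would apply Lemma~\ref{LimitAction} to the sequence $(h_n\upharpoonright G^{i+1})_{n<\omega}$ (with the trivial base points of our Convention): passing to a convergent subsequence produces a non-trivial action $G^{i+1}\curvearrowright Y$ on a based real tree $(Y,*)$ with trivial tripod stabilizers, abelian arc stabilizers, and which is super-stable; the analysis below applies to any such subsequence. Faithfulness of the limit action is one of the defining features of a test sequence --- the morphisms of a test sequence discriminate $G^{i+1}$, which, being a limit group, embeds into its limit action --- so here I would quote Sela's work rather than reprove it.

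Next I would locate inside $Y$ the pieces dictated by the surface flat splitting $T:=\mathcal{A}(G^{i+1},G^{i})$: the surface-type vertex $u$ with $\Stab(u)=\pi_1(\Sigma_{g,l})$, and the remaining vertices $v$ carrying the free factors $G^i_j$ of $G^i$ as stabilizers. Using the growth requirements that a test sequence imposes at the $(i+1)$-st floor, each non-surface vertex stabilizer fixes a point $Y_v$ in $Y$, and each boundary curve of $\Sigma_{g,l}$ fixes a point $p_e\in Y$, which will serve as the attaching point of the corresponding edge. For the surface vertex, the conditions imposed on $(h_n\upharpoonright\pi_1(\Sigma_{g,l}))_{n<\omega}$ --- the analogue for surfaces with boundary of the closed-surface conditions recalled in Definition~\ref{ClosedSurfaceTest}, now with the peripheral elements required to be elliptic --- are exactly what forces the limit action of $\pi_1(\Sigma_{g,l})$ on its minimal invariant subtree $Y_u$ to be a surface-type action dual to an arational measured foliation on $\Sigma_{g,l}$, with the peripheral subgroups fixing the attaching points.

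I expect this last point to be the main obstacle: one must rule out that the surface ``folds'' in the limit, i.e. show that all of $\pi_1(\Sigma_{g,l})$ survives, that arc stabilizers of $Y_u$ are trivial, and that no simplicial, axial or Levitt part appears inside $Y_u$, the recovered foliation being arational. This is precisely the behaviour that the (notoriously intricate) combinatorial conditions on test sequences are engineered to guarantee; the argument would combine Rips' machine (Theorem~\ref{RipsMachine}) applied to $Y_u$ with the small-cancellation-type estimates coming from the test-sequence conditions, which eliminate the non-surface possibilities, and for the details I would cite Sela.

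Finally I would assemble the graph of actions. Using the Bass-Serre decomposition of $G^{i+1}$ along $T$, any segment of $Y$ between approximating points decomposes into boundedly many subsegments, each contained in a single $G^{i+1}$-translate of $Y_u$ or of some $Y_v$, the bound being controlled by the length in the Bass-Serre tree $T$ of a word representing its endpoints; together with super-stability and triviality of tripod stabilizers, which preclude any further overlap, this shows that these translates form a transverse covering of $Y$ whose skeleton is $G^{i+1}$-equivariantly isomorphic to $T=\mathcal{A}(G^{i+1},G^{i})$. By the correspondence between transverse coverings and graphs of actions (see \cite{GuirardelRnTrees}), $Y$ then decomposes as a graph of actions over $T$ with the vertex action at $u$ of surface type and the vertex actions at the $v$'s trivial, the edge stabilizers and Bass-Serre elements being inherited from $\mathcal{A}(G^{i+1},G^{i})$; this yields items (1)--(5).
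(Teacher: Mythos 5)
The paper does not actually prove this statement: it is recorded as a Fact abstracted from Sela's test-sequence analysis and is justified only by the citation to \cite{Sel2} (cf.\ Theorem 1.3, Proposition 1.8, Theorem 1.18), which is precisely where your outline also places every load-bearing step (faithfulness of the limit action, the surface vertex group acting by a genuine surface-type action, and the exclusion of simplicial/axial/Levitt degenerations). Your Bestvina--Paulin plus transverse-covering framing around that appeal to Sela is therefore consistent with, and essentially the same as, the paper's treatment, which offers no independent argument to compare against.
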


\begin{fact}[Abelian flat limit action]\label{AbelianFact}
Let $\mathcal{T}(G,\F)$ be a limit tower and $(h_n)_{n<\omega}:G\to\F$ 
be a test sequence with respect to $\mathcal{T}(G,\F)$. 

Suppose $G^{i+1}=G^i*_A(A\oplus\Z)$ is obtained from $G^i$ by gluing a free abelian flat along $A$ (where $A$ is a maximal 
abelian subgroup of $G^i$). 

Then any subsequence of $(h_n\upharpoonright G^{i+1})_{n<\omega}$ that converges, as in Lemma \ref{LimitAction}, 
induces a faithful action of $G^{i+1}$ on a based real tree $(Y,*)$, with the following properties:
 \begin{enumerate}
  \item the action of $G^{i+1}$ on $Y$, $G^{i+1}\curvearrowright Y$, decomposes as 
  a graph of actions $(G^{i+1}\curvearrowright T, \{Y_u\}_{u\in V(T)},\{ p_e\}_{e\in E(T)})$;
   \item the Bass-Serre presentation for $G^{i+1}\curvearrowright T$, $(T_1=T_0,T_0)$, is a segment $(u,v)$;
   \item $Stab_G(u):=A\oplus\Z\curvearrowright Y_u$ is a simplicial type action, its Bass-Serre presentation, 
   $(Y_u^1,Y_u^0,t_e)$ consists of a segment $Y_u^1:=(a,b)$ whose stabilizer is $A$, a point 
   $Y_u^0=a$ whose stabilizer is $A$ and a Bass-Serre element $t_e$ which is $z$;
   \item $Y_v$ is a point and $Stab_G(v)$ is $G^i$;
   \item the edge $(u,v)$ is stabilized by $A$.
 \end{enumerate}
\end{fact}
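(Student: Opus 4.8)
The plan is to extract from the recursive definition of a test sequence the precise asymptotic behaviour of $(h_n\upharpoonright G^{i+1})_{n<\omega}$ on the abelian flat and then read off the limiting tree. By definition of a test sequence on a free abelian flat, $h_n$ carries $A$ into a cyclic subgroup $\langle\gamma_n\rangle$ of $\F$ (abelian subgroups of free groups are cyclic), $h_n(z)=\gamma_n^{m_n}$ with $m_n\to\infty$, and the growth of $z$ strictly dominates the growth of every element of $G^i$; since a generating set of $G^{i+1}$ is a generating set of $G^i$ together with $z$, the Bestvina--Paulin rescaling constant is, up to a bounded factor, $|h_n(z)|_\F=m_n|\gamma_n|_\F$. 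First I would invoke Lemma~\ref{LimitAction}: the given convergent subsequence yields the based real $G^{i+1}$-tree $(Y,*)$ with trivial tripod stabilizers, abelian arc stabilizers and super-stable action; faithfulness is addressed at the end.

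Next I would compute the limit action on the two distinguished subgroups. For any $g$ in a generating set of $G^i$ one has $\hat d_n(*,g\cdot *)=|h_n(g)|_\F/|h_n(z)|_\F\to 0$ by domination, hence $G^i$ fixes $*$ in $Y$. On the other hand $\hat d_n(*,z^k\cdot *)=|h_n(z^k)|_\F/|h_n(z)|_\F\to |k|$ after the obvious normalisation, so $z$ acts as a hyperbolic isometry of translation length $1$ whose axis $\Ax(z)$ passes through $*$. Since $A$ commutes with $z$ it preserves $\Ax(z)$, and being contained in $G^i$ it fixes the point $*\in\Ax(z)$; an isometry of a line that fixes a point and commutes with a nontrivial translation is the identity, so $A$ acts trivially on $\Ax(z)$. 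Thus $A\oplus\Z$ acts on the line $Y_u:=\Ax(z)$ with $z$ translating and $A$ trivial; this is a simplicial (discrete) type action whose Bass--Serre presentation is a segment $(a,b)$ with $b=z\cdot a$, vertex stabilizer $A$, edge stabilizer $A$ and stable letter $z$, which is exactly conclusion~(3).

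To assemble the graph of actions I would take $T$ to be the Bass--Serre tree of the amalgam $G^{i+1}=G^i*_A(A\oplus\Z)$, whose standard presentation is the single edge $(u,v)$ with $\Stab(u)=A\oplus\Z$, $\Stab(v)=G^i$ and $\Stab((u,v))=(A\oplus\Z)\cap G^i=A$ (the last equality by maximal abelianity of $A$ in $G^i$, i.e. the CSA property), which gives conclusions~(2) and~(5). Setting $Y_v=\{*\}$ (a point, conclusion~(4)), $Y_u=\Ax(z)$ as above, and the attaching point of $(u,v)$ to be $a\in Y_u$ together with $*\in Y_v$, one obtains a graph of actions $\mathcal A$ with associated real $G^{i+1}$-tree $Y_{\mathcal A}$. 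The remaining point, which yields conclusion~(1), is that $Y$ is $G^{i+1}$-equivariantly isometric to $Y_{\mathcal A}$: since $G^{i+1}=\langle G^i,z\rangle$, since $G^i\cdot *=\{*\}$, and since $z$ translates $\Ax(z)\ni *$ by $1$, minimality of the action forces $Y$ to be exactly the union of the $G^{i+1}$-translates of the line $\Ax(z)$, glued along the single-point orbits of $G^i$.

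The hard part is faithfulness, together with the \emph{a priori} possibility, allowed by Rips' machine (Theorem~\ref{RipsMachine}), that $Y$ carries surface or exotic (Levitt) vertex actions or further simplicial pieces. Ruling these out requires the full list of combinatorial conditions defining a test sequence on an abelian flat in \cite{Sel2}: one uses that under the chosen rescaling every element of $G^i$ has limiting displacement $0$, that $z$ is the unique surviving translation, and that the relation $[A,z]=1$ pins its axis, so that no non-degenerate $G^i$-subtree and no exotic component can occur. The same combinatorial input shows that $(h_n\upharpoonright G^{i+1})$ does not factor through any proper quotient of $G^{i+1}$ --- equivalently, $G^{i+1}$ is its limit group --- which is precisely the faithfulness of $G^{i+1}\curvearrowright Y$.
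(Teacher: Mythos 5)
You should first be aware that the paper does not prove this statement at all: Fact~\ref{AbelianFact} is one of the properties of test sequences that the author explicitly \emph{abstracts} from Sela's work and quotes without proof (see the sentence preceding Fact~\ref{GroundFloorFact}, citing \cite{Sel2}, cf.\ Theorem 1.3, Proposition 1.8, Theorem 1.18 and the combinatorial conditions on p.~222 there). So you are supplying an argument where the paper supplies a citation. Your outline is the standard one and the elementary computations are right: rescaling by $\abs{h_n(z)}_{\F}$ (comparable to the full rescaling constant by the domination condition), every element of $G^i$ has displacement tending to $0$, hence $G^i$ fixes $*$; $z$ becomes hyperbolic of translation length $1$ with $*$ on its axis; $A$ preserves $\Ax(z)$ by commutation and fixes it pointwise since it fixes $*$; and the candidate graph of actions is the Bass--Serre tree of $G^i*_A(A\oplus\Z)$ with $Y_v$ a point and $Y_u=\Ax(z)$.

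However, as a proof the proposal has a genuine gap exactly where the content of the Fact lies. The step ``minimality of the action forces $Y$ to be exactly the union of the $G^{i+1}$-translates of the line $\Ax(z)$, glued along the single-point orbits of $G^i$'' is not justified: minimality only gives that $Y$ is the convex hull of this union once you know the union is an invariant subtree, and, more importantly, it does not tell you how distinct translates intersect. To obtain the stated decomposition you must show, e.g., that $\Ax(z)\cap g\cdot\Ax(z)$ is at most a point for $g\in G^i\setminus A$ (using that nondegenerate arc stabilizers are abelian, that such an overlap would be fixed by $\langle A, gAg^{-1}\rangle$, and CSA/malnormality to force $g\in A$), that the segment stabilizer is exactly $A$ and not larger, and that no further components (simplicial, surface or exotic in the sense of Theorem~\ref{RipsMachine}) occur; none of this is carried out. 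Finally, faithfulness --- which is the other substantive assertion of the Fact --- is simply deferred to ``the full list of combinatorial conditions in \cite{Sel2}'', i.e.\ to the very source the paper cites for the whole statement. So what you have is a correct and useful outline of the easy half, but the assembly of the graph of actions and the faithfulness claim are asserted rather than proved, and those are precisely the points that make this a nontrivial fact.
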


\section{An infinitude of images}\label{CoreMaterial}
In this section we prove all the technical results that are in the core of this paper. 
We fix a nontrivial finitely generated free group $\F$.

\begin{theorem}\label{UniRealSort}
Let $G$ be a group that has the structure of a tower $\mathcal{T}(G,\F)$. Suppose $g\in G$ and  
$\{h_n(g) \ | \ n<\omega\}$ is finite for some test sequence $(h_n)_{n<\omega}$ for $\mathcal{T}(G,\F)$. 
Then $g$ belongs to $\F$. 

In particular, the element $g$ takes a unique value under any morphism from $G$ to $\F$ (that stays the identity on $\F$).
\end{theorem}
\begin{proof}
The proof is by induction on the height of $g$. 

For the base case, suppose $height_{\mathcal{T}}(g)=0$ and $G^0:=\F*\F_l$. The sequence $(h_n)_{n<\omega}$ restricted to $G^0$ admits a subsequence that converges and 
induces an action of $G^0$ on a based real tree as in Fact \ref{GroundFloorFact}. Since $g$ takes finitely many values under $(h_n)_{n<\omega}$ it must 
fix the base point. But the stabiliser of the base point is $\F$ itself. Thus $g$ belongs to $\F$. 

For the induction step, assume the claim is true for every element of height less than or equal to $i$, we show it is true for elements of height $i+1$.  
Since $g$ takes finitely many values under $(h_n)_{n<\omega}$, we can use the Bestvina-Paulin method to obtain, from a subsequence of $(h_n)_{n<\omega}$ restricted to $G^{i+1}$, 
an action of $G^{i+1}$ on a based real tree $(Y,*)$ so that $g$ fixes the base point. Recall that, by the definition of a tower, 
$G^{i+1}$ could either be a surface flat or an abelian flat over $G^i$. In either case Facts \ref{SurfaceFact}, \ref{AbelianFact} 
tell us that the base point is stabilised by $G^i$, thus $g$ must belong to $G^i$ and the induction hypothesis concludes the proof. 
\end{proof}

We get the same result for conjugacy classes. For $g$ an element of $\F$ we denote by $[g]$ its  
conjugacy class in $\F$.

\begin{theorem}\label{UniConjSort}
Let $G$ be a group that has the structure of a tower $\mathcal{T}(G,\F)$.  
Suppose $g\in G$ and $\{[h_n(g)] \ | \ n<\omega\}$ is finite, 
for some test sequence $(h_n)_{n<\omega}$ for $\mathcal{T}(G,\F)$. 
Then $g$ can be conjugated in $\F$. 

In particular, 
the conjugacy class of the image of $g$ under any morphism from $G$ to $\F$ (that stays the identity on $\F$) is unique.
\end{theorem}
\begin{proof}
The proof is similar to the previous proof, it is by induction on the height of $g$.

Since $[h_n(g)]$ is finite 
we may assume that $h_n(g)=c^{\gamma_n}$ for some sequence $(\gamma_n)_{n<\omega}\subset\F$ and an element $c\in\F$. 
Moreover, we may assume that $c$ does not commute with $\gamma_n$ for all but finitely many $n$ and that $|\gamma_n|$, 
the length of $\gamma_n$ under a fixed basis of $\F$, tends to infinity as $n$ goes to infinity. If one of these assumptions 
is not met we could find a limiting action where $g$ fixes the base point in the limit real tree, and proceed as in the proof 
of Theorem \ref{UniRealSort}. 

For the base case, suppose the height of $g$ is $0$ and $G^0:=\F*\F_l$. We can use the Bestvina-Paulin method 
to obtain a limiting action on a based real tree $(Y,*)$ satisfying the conclusions of Fact \ref{GroundFloorFact}. Under the above assumptions, 
since the length of $\gamma_n$ is eventually bounded by the rescaling factor, the 
sequence $(\gamma_n)_{n<\omega}$ approximates a point, say $p$, in the limit real tree. 
The element $g$ fixes this point since, by Lemma \ref{ApproximatingSequences}, 
the distance between $p$ and $g\cdot p$ in $Y$ is the limit of the distances $\hat{d}_n(\gamma_n,h_n(g)\cdot\gamma_n)$, which is $0$. 
In order to conclude, we want to prove that $p$ is a translate of the base point.  
We consider the structure of the limit real tree $Y$ decomposed as a graph of actions given by Fact \ref{GroundFloorFact}. By Lemma \ref{TraCov} $Y$ is covered 
by translates of $Y_u$, the component which is acted on by $\F_l$. Thus, $p$ can be translated into $Y_u$ and consequently a conjugate of 
$g$ fixes a point in $Y_u$. But, by the type of the action on $Y_u$, only a translate of the base point inside it can have a nontrivial stabiliser. 
This latter observation concludes the proof. 
 
For the induction step, assume the claim is true for every element of height less than or equal to $i$, we show it is true for elements of height $i+1$. 
The proof is not much different than the base case. Recall that, by the definition of a tower, the group  
$G^{i+1}$ could either be a surface flat or an abelian flat over $G^i$. In either case we can obtain a limiting action on a based real tree $(Y,*)$ 
so that the conclusion of either Fact \ref{SurfaceFact} or Fact \ref{AbelianFact} hold. As in the base case the element $g$ fixes a point $p$ 
that can be translated into the component of the real tree that is acted on by either the surface flat or the abelian flat. In both cases, 
the only points that admit nontrivial stabilisers are translates of the base point, hence $g$ can be conjugated into $G^i$ and 
the induction hypothesis concludes the proof.
\end{proof}

For couples (respectively triples) in the sort for left-$m$-cosets (respectively $(m,n)$-double cosets) we get a slightly different result. We 
split the results in three lemmata. 

\begin{lemma}\label{UniCent}
Let $G$ be a group that has the structure of a tower $\mathcal{T}(G,\F)$. Let 
$(h_n)_{n<\omega}:G\to\F$ be a test sequence for $\mathcal{T}(G,\F)$ and $g\in G$ such that  
$\{C_{\F}(h_n(g)) \ | \ n<\omega\}$ is finite. 

Then either $g$ belongs to $\F$, or there exists a free abelian flat $G^{i+1}:=G^i*_A(A\oplus \Z^m)$ of the tower  
$\mathcal{T}(G,\F)$ whose peg $A$ is a (maximal abelian) subroup of $\F$, and $g$ belongs to $A\oplus \Z^m$. 

In particular, the centraliser of the image of $g$ under any morphism from $G$ to $F$, that stays the identity on $\F$, is unique.
\end{lemma}
\begin{proof}
The proof is by induction on the height of $g$.

Since $\{C_{\F}(h_n(g)) \ | \ n<\omega\}$ is finite, we may assume that $h_n(g)=c^{k_n}$, where $c$ has no root 
and $k_n$ tends to infinity as $n$ goes to infinity. 

For the base case, suppose the height of $g$ is $0$ and $G^0:=\F*\F_l$. We can use the Bestvina-Paulin method 
to obtain a limiting action on a based real tree $(Y,*)$ satisfying the conclusions of Fact \ref{GroundFloorFact}. 
Under the above assumptions, either $g$ fixes the base point and thus $g$ is in $\F$, 
or the segment $[*,g\cdot *]$ is not trivial. In the latter case, the segment $[*,g\cdot *]$ is fixed by 
the centraliser $C_{\F}(c)$. To see this we can use Lemma \ref{ApproximatingSequences}. The base point is fixed by any power of $c$ 
as the test sequence is constant on $\F$, on the other hand the distance $d_Y(g\cdot *, c^m\cdot(g\cdot *))$ is the 
limit of the distances $\hat{d}_n(c^{k_n}, c\cdot c^{k_n})$ that tends to $0$. 
But, according to Fact \ref{GroundFloorFact}, all segments in the real tree $(Y,*)$ 
are trivially stabilised, a contradiction. 

Assume the result is true for any element $g$ of height less or equal to $i$. We show it for elements of height $i+1$. 
Recall that by the definition of a tower the $i+1$-th flat could either be a surface flat or a free abelian flat. In both cases, 
we follow the argument of the base case. We may assume that $g$ does not fix the base point, hence the nontrivial 
segment $[*,g\cdot *]$ is fixed by the centraliser $C_{\F}(c)$.  We now separate the cases. 

Suppose first that the $i+1$-th flat is a surface flat. As in the base case, all nontrivial segments are trivially stabilised, thus we obtain 
a contradiction.

In the case the $i+1$-th flat is a free abelian flat we first show that the segment $[*,g\cdot *]$ must be contained entirely 
in a translate of the component, say $Y_u$, which is acted on by the abelian flat. 
Suppose not, then we will have at least two subsegments $I_1, I_2$ of $[*,g\cdot *]$ 
contained in two distinct translates of $Y_u$. Each component has a stabiliser which is a conjugate of the peg of the free abelian flat, 
as a matter of fact these conjugates must meet trivially. Thus, the whole segment is trivially stabilised, a contradiction. 
In addition, since there exists a nontrivial segment which is stabilised by $C_{\F}(c)$ we may assume that $A=C_{\F}(c)$.  
Now since the segment $[*,g\cdot *]$ belongs to the component stabilised by $A\oplus \Z^m$, there exists 
an element in $A\oplus \Z^m$, say $z$, such that $g\cdot *=z\cdot *$, in particular $g=z\cdot\gamma$ for some element 
$\gamma$ in $G^i$. Since $h_n(g)=h_n(z)h_n(\gamma)$ we see that $h_n{\gamma}=c^{l_n}$ for some sequence of integers $l_n$.  Thus, 
by the induction hypothesis, either $g$ belongs to $\F$ and more precisely to $C_{\F}(c)$, or there exists an abelian flat 
$G^{j+1}=G^j*_B (B\oplus \Z^r)$ where $B=C_{\F}(c)$, a contradiction to the structure of a tower, since the peg $A$ cannot be 
glued to (a conjugate of) a previous peg. Thus, the element $g$ has the required form. 
\end{proof}

Let $k$ be a natural number. We denote by $C_{\F}^k(c)$ the subgroup of $k$-powers of the centraliser 
$C_{\F}(c)$ of a nontrivial element $c$ in $\F$.

\begin{lemma}\label{LeftCosets}
Let $G$ be a group that has the structure of a tower $\mathcal{T}(G,\F)$. Let 
$(h_n)_{n<\omega}:G\to\F$ be a test sequence for $\mathcal{T}(G,\F)$ and $c\in\F\setminus\{1\}$.  

Let $g\in G$ such that $\{h_n(g)\cdot C^k_{\F}(c) \ | \ n<\omega\}$ is finite. Then either $g\in\F$ or there exist an element 
$d\in\F$ and a free abelian flat $G^{i+1}:=G^i*_A(A\oplus \Z^m)$ of the tower  
$\mathcal{T}(G,\F)$ whose peg $A$ is the group $C_{\F}(c)$, and $d^{-1}g$ belongs to  
$A\oplus \Z^m$.

In particular, there are at most $k$ cosets $h(g)\cdot C^k_{\F}(c)$ under any morphism $h$ from $G$ to $F$ 
that stays the identity on $\F$.

\end{lemma}

\begin{proof}
By the hypothesis we can refine the given sequence 
so that $h_n(g)\cdot C_{\F}^k(c)=d\cdot C_{\F}^k(c)$ for some $d\in\F$. Thus, $h_n(g)=d\cdot c^{m_n}$ for 
some sequence $(m_n)_{n<\omega}$ of elements in $k\cdot \Z$. Hence $h_n(d^{-1}\cdot g)= c^{m_n}$ and following the 
proof of Lemma \ref{UniCent} we conclude.
\end{proof}

Lemmata \ref{UniCent} and \ref{LeftCosets} give the following corollary.

\begin{theorem}\label{BoundLeftCosets}
Let $G$ be a group that has the structure of a tower $\mathcal{T}(G,\F)$. Let 
$(h_n)_{n<\omega}:G\to\F$ be a test sequence for $\mathcal{T}(G,\F)$. Let $g_1,g_2$ be elements in $G$, $k$ be a natural number 
and $E_{2_k}$ be the equivalence relation of $k$-left-cosets (see Definition \ref{Imaginaries}).
Suppose $\{[(h_n(g_1),h_n(g_2))]_{E_{2,k}} \ | \ n<\omega\}$ is finite. 

Then the set $\{[(h(g_1),h(g_2))]_{E_{2,k}} \ | \ h:G\rightarrow\F, \ \ h\upharpoonright\F=Id \}$ is bounded by $k$.
\end{theorem}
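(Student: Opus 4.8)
The plan is to combine the two lemmata just proved with the uniqueness statement for centralizers from Corollary~\ref{UniCent}, and then to convert the resulting coset information into a counting argument modulo $k$. The only genuinely new ingredient is the elementary remark that a fixed coset of a cyclic group $\langle\delta\rangle$ meets exactly $k$ cosets of $\langle\delta^k\rangle$; everything else is organization on top of Lemmata~\ref{InfCentr} and \ref{LeftCosets}.

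First I would note that the $E_{2,k}$-class of a pair $(x,y)$ determines $C_{\F}(y)$: it is the cyclic subgroup $\langle b\rangle$ when $y\neq 1$, and it is $\F$ when $y=1$. Hence the hypothesis that $\{[(h_n(g_1),h_n(g_2))]_{E_{2,k}}\mid n<\omega\}$ is finite forces $\{C_{\F}(h_n(g_2))\mid n<\omega\}$ to be finite, so $g_2$ meets the hypothesis of Corollary~\ref{UniCent}. Let $\gamma\in\F$ be the element it provides, so that $C_{\F}(f_n(g_2))=C_{\F}(\gamma)$ for every test sequence $(f_n)_{n<\omega}$ and every $n<\omega$ (in particular for $(h_n)_{n<\omega}$ itself). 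If $\gamma=1$, then $f_n(g_2)$ is central in $\F$, hence trivial, for all $n$ and all test sequences, so every pair $(f_n(g_1),f_n(g_2))$ falls in the single $E_{2,k}$-class of pairs with trivial second coordinate and the bound $1\leq k$ holds. So from now on assume $\gamma\neq 1$ and write $C_{\F}(\gamma)=\langle\delta\rangle$ with $\delta$ a root of $\gamma$, so that $C_{\F}^{k}(\gamma)=\langle\delta^{k}\rangle$. Since $C_{\F}(h_n(g_2))=\langle\delta\rangle$ for all $n$, finiteness of the set of $E_{2,k}$-classes now forces $\{h_n(g_1)\cdot C_{\F}^{k}(\gamma)\mid n<\omega\}$ to be finite, i.e. $g_1$ meets the hypothesis of Lemma~\ref{LeftCosets} for this very $\gamma$.

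Next I would apply Lemma~\ref{LeftCosets} to $g_1$. If $g_1\in\F$, then $f_n(g_1)=g_1$ for every test sequence, so the class $(\langle\delta\rangle,\,g_1\langle\delta^{k}\rangle)$ is constant and a fortiori the set of classes has at most $k\geq 1$ elements. Otherwise the lemma supplies free abelian flats $G^{i_j}*_{A_{i_j}}(A_{i_j}\oplus\Z)$, triples $(a_j,b_j,c_j)$ for $j\leq m$ and an element of $\F$ such that $g_1=a_1b_1c_1$, $a_jc_j=a_{j+1}b_{j+1}c_{j+1}$, $c_j\gamma c_j^{-1}\in A_{i_j}$, $b_j\in(A_{i_j}\oplus\Z)\setminus A_{i_j}$, and $a_mc_m$ is an element of $\F$ lying in a fixed coset of $C_{\F}(\gamma)$. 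Now fix an arbitrary test sequence $(f_n)_{n<\omega}$ and repeat the computation from the proof of Corollary~\ref{UniCent}. For each $j$ the restriction of $f_n$ to $A_{i_j}\oplus\Z$ has image in a cyclic group $\langle\mu_n\rangle$ of $\F$ (Fact~\ref{AbelianFact} together with the definition of a test sequence). The element $f_n(c_j)\gamma f_n(c_j)^{-1}=f_n(c_j\gamma c_j^{-1})\in f_n(A_{i_j})\subseteq\langle\mu_n\rangle$ is a conjugate of $\gamma$, hence nontrivial, so the abelian group $\langle\mu_n\rangle$ centralizes it, whence $\langle\mu_n\rangle\subseteq f_n(c_j)C_{\F}(\gamma)f_n(c_j)^{-1}$; since $f_n(b_j)\in\langle\mu_n\rangle$ this gives $f_n(c_j)^{-1}f_n(b_j)f_n(c_j)\in C_{\F}(\gamma)$. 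Therefore $f_n(a_jb_jc_j)=f_n(a_jc_j)\cdot\beta_n^{(j)}$ with $\beta_n^{(j)}\in C_{\F}(\gamma)$, and telescoping along the identities $a_jc_j=a_{j+1}b_{j+1}c_{j+1}$ (and using that $C_{\F}(\gamma)$ is abelian) yields $f_n(g_1)=f_n(a_mc_m)\cdot\beta_n$ with $\beta_n\in C_{\F}(\gamma)$. Since $a_mc_m\in\F$, we have $f_n(a_mc_m)=a_mc_m$, which lies in a fixed coset $w\langle\delta\rangle$ of $C_{\F}(\gamma)$; hence $f_n(g_1)\in w\langle\delta\rangle$ for every $n$ and every test sequence.

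Finally I would conclude: for any test sequence $(f_n)_{n<\omega}$ we have $C_{\F}(f_n(g_2))=\langle\delta\rangle$ and $f_n(g_1)=w\delta^{s_n}$ for some $s_n\in\Z$, so $[(f_n(g_1),f_n(g_2))]_{E_{2,k}}=(\langle\delta\rangle,\,w\delta^{s_n}\langle\delta^{k}\rangle)$, which depends only on $s_n\bmod k$; hence the set of these classes has cardinality at most $k$. The one point that needs care — the ``main obstacle'', such as it is — is keeping the bookkeeping straight: the $\gamma$ used to pin down $C_{\F}(f_n(g_2))$ via Corollary~\ref{UniCent} must be the same $\gamma$ fed into Lemma~\ref{LeftCosets} for $g_1$, and one must check that the coset $w\langle\delta\rangle$ containing $f_n(g_1)$ is genuinely independent of the chosen test sequence, which is exactly what the telescoping argument, run verbatim as in Corollary~\ref{UniCent}, guarantees.
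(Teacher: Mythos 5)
Your proof is correct and follows essentially the same route as the paper: finiteness of the $E_{2,k}$-classes yields finiteness of the centralizers $C_{\F}(h_n(g_2))$ and of the cosets $h_n(g_1)\cdot C^k_{\F}(\gamma)$, so Corollary \ref{UniCent} and Lemma \ref{LeftCosets} apply, and the commutation relations $f_n(b_j)\in f_n(c_j)C_{\F}(\gamma)f_n(c_j)^{-1}$ collapse $f_n(g_1)$ into a fixed coset of $C_{\F}(\gamma)$ for every test sequence, exactly as in the paper's induction on the number of abelian flats (your telescoping is the same computation organized differently). Your version is in fact slightly more complete, since you spell out the degenerate cases ($\gamma=1$, $g_1\in\F$) and the final counting of cosets of $C^k_{\F}(\gamma)$ inside $\delta\cdot C_{\F}(\gamma)$, which the paper leaves implicit.
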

\begin{proof}
Since the set $\{[(h_n(g_1),h_n(g_2))]_{E_{2,k}} \ | \ n<\omega\}$ is finite we have that the set 
$\{C_{\F}(h_n(g_2)) \ | \ $ $ n<\omega\}$ is finite. Thus, by Lemma \ref{UniCent} there exists 
a non-trivial element $c\in\F$ so that, 
for any morphism from $G$ to $\F$ that stays the identity on $\F$, the centraliser of the image of $g_2$ is $C_{\F}(c)$. 

On the other hand, again by the finiteness of $\{[(h_n(g_1),h_n(g_2))]_{E_{2,k}} \ | \ n<\omega\}$ we have that 
$\{h_n(g_1)\cdot C^k_{\F}(c) \ | \ n<\omega\}$ is finite. Thus, by Lemma \ref{LeftCosets}, the number of cosets of $C^k_{\F}(c)$ 
by the image of $g_1$ under any morphism from $G$ to $\F$ that stays the identity on $\F$, is bounded by $k$.  
\end{proof}

Similar results hold for double cosets.

\begin{lemma}\label{DoubleCosets}
Let $G$ be a group that has the structure of a tower $\mathcal{T}(G,\F)$. Let 
$(h_n)_{n<\omega}:G\to\F$ be a test sequence for $\mathcal{T}(G,\F)$ and $\gamma,\delta\in\F\setminus\{1\}$.  

Let $p,q$ be natural numbers and 
$g\in G$ such that $\{C_{\F}^p(c)\cdot h_n(g)\cdot C^q_{\F}(d) \ | \ n<\omega\}$ is finite. Then one of the following 
holds: 
\begin{itemize}
 \item either $g$ belongs to $\F$; or 
 \item the elements $c$ and $d$ are not conjugates and there exist abelian flats $G^{i+1}:= G^i*_A(A\oplus\Z^m)$ and 
 $G^{j+1}:= G^j*_B(B\oplus\Z^r)$ for $j<i$ and an element $e$, 
 with $A=C_{\F}(c)$, $B=C_{\F}(d)$ and such that 
 $g=z\cdot \gamma$, where $z$ belongs to $(A\oplus\Z^m)$ and $e^{-1}\cdot\gamma$ belongs to $B\oplus\Z^r$ ; or 
 \item the element $d$ is a conjugate of $c$ by the element $\gamma^{-1}e$ of $\F$ and there exist an abelian flat $G^{i+1}:= G^i*_A(A\oplus\Z^m)$, 
 where $A=C_{\F}(c)$. Moreover,  $g$ has the normal form $z_1\gamma z_2\gamma^{-1}e$ with respect to $G^i*_A(A\oplus\Z^m)$.
\end{itemize} 
In particular, there are at most $p\cdot q$ double cosets $C_{\F}^p(c)\cdot h_n(g)\cdot C^q_{\F}(d)$ under any morphism $h$ from $G$ to $F$ 
that stays the identity on $\F$.
\end{lemma}
\begin{proof}
The proof is by induction on the height of $g$. 

Since $\{C_{\F}^p(c)\cdot h_n(g)\cdot C^q_{\F}(d) \ | \ n<\omega\}$ is finite, we may assume that $h_n(g)=c^{k_n}ed^{s_n}$, where $c$ and $d$ have no roots 
and $k_n, s_n$ tend to infinity as $n$ goes to infinity. 

For the base case, suppose the height of $g$ is $0$ and $G^0:=\F*\F_l$. We can use the Bestvina-Paulin method 
to obtain a limiting action on a based real tree $(Y,*)$ satisfying the conclusions of Fact \ref{GroundFloorFact}. 
Under the above assumptions, either $g$ fixes the base point and thus $g$ is in $\F$, 
or the segment $[*,g\cdot *]$ is not trivial. In the latter case, either a nontrivial initial segment $I_1$ of $[*,g\cdot *]$ is fixed by $C_{\F}(c)$ 
or an initial segment $I_2$ of $[*,g^{-1}\cdot *]$ is fixed by $C_{\F}(d)$. In both cases we have a contradiction, since no segment in this 
real tree can have a nontrivial stabiliser. 

Assume the result is true for any element $g$ of height less than or equal to $i$. We show it for elements of height $i+1$. 
Recall that by the definition of a tower the $i+1$-th flat could either be a surface flat or a free abelian flat. In both cases, 
we follow the argument of the base case. We may assume that $g$ does not fix the base point, hence either the nontrivial 
segment $[*,g\cdot *]$ is fixed by the centraliser $C_{\F}(c)$ or the nontrivial segment $[*,g\cdot *]$ is fixed by the centraliser 
$C_{\F}(d)$.  

Suppose first that the $i+1$-th flat is a surface flat. As in the base case, all nontrivial segments are trivially stabilised, thus we obtain 
a contradiction. Therefore, the $i+1$-th flat is a free abelian flat. We further separate cases as follows: \\ \\
{\em Case 1:} We first consider the case where either $(c^{k_n})$ or $(d^{s_n})$ 
approximate the base point. Without loss of generality assume $(d^{s_n})$ approximates it. We show that the segment $[*,g\cdot *]$ 
must be entirely contained in a translate of the component, say $Y_u$, which is acted on by the abelian flat. 
Indeed, the segment $[*,g\cdot *]$ is stabilised by $C_{\F}(c)$, since for any element $c^k$ from $C_{\F}(c)$, the distance 
$\hat{d}_n(c^{k_n}ed^{s_n}, c^kc^{k_n}ed^{s_n})$ tends to zero. Thus, as in Lemma \ref{UniCent}, we may assume that the peg of the 
free abelian flat is the centraliser $C_{\F}(c)$ and moreover $g\cdot *=z\cdot *$ for some $z\in A\oplus \Z^m$. 
Thus, $g= z\cdot\gamma$ for some $\gamma\in G^i$. Furthermore, 
without loss of generality, we may assume that $h_n(z)=c^{k_n}$, and therefore $h_n(\gamma)=ed^{s_n}$. 
By Lemma \ref{UniCent} we get that there exists a free abelian flat $G^{j+1}:=G^j\oplus(B\oplus\Z^r)$ with $B=C_{\F}(d)$ 
and $e^{-1}\gamma$ belongs to $B\oplus\Z^r$. In addition, by the construction of the tower $c$ and $d$ cannot be conjugates. 
\ \\ \\
{\em Case 2:} We are left with the case where neither $(c^{k_n})$ nor $(d^{s_n})$ approximate the base point. In this 
case the segment $I_0:=[*, (c^{k_n})]$ is an initial segment of $[*, g\cdot *]$. If not, all $d$, $c$, and $e$ 
must commute and this case have been dealt in Lemma \ref{UniCent}, where the conclusion is as desired. 

The segment $I_0$ is fixed by the centraliser $C_{\F}(c)$, thus, we may assume as before that $A=C_{\F}(c)$. We next 
observe that no nontrivial subsegment of $I_1:=[(c^{k_n}), c^{k_n}ed^{s_n}]$ can be fixed by a power of $c$, thus $(c^{k_n})$ 
is a branching point. In particular there exists $z_1$ in $A\oplus \Z^m$ so that $I_0=[*, z_1\cdot *]$. Without loss 
of generality we may assume that $h_n(z_1)=c^{k_n}$. We next see that $I_1$ is fixed by 
$z_1eC_{\F}(d)e^{-1}z_1^{-1}$. This follows by verifying that $\hat{d}_n(c^{k_n}ed^{s_n}, c^{k_n}ed^le^{-1}c^{-k_n}c^{k_n}ed^{s_n})$ tends to zero. 
Thus, by the structure of the limiting tree, we have that there exists $\gamma$ in $G^i$ such that 
$z_1eC_{\F}(d)e^{-1}z_1^{-1}= z_1\gamma C_{\F}(c)\gamma^{-1}z_1^{-1}$. In particular, $\gamma$ must belong to $\F$ and $d$ is a 
conjugate of $c$ by $\gamma^{-1}e$. Since $[*, g\cdot *]$ is contained in the union of $Y_u$ (the component which is acted on by $A\oplus \Z^m$) and 
its translate $z_1\cdot\gamma Y_u$, we see that the normal form of $g$, with respect to the free abelian flat, must be $z_1\gamma z_2\delta$ for 
some $z_2$ in $A\oplus \Z^m$ and some $\delta$ in $G^i$. Therefore, its images must satisfy $h_n(g)=c^{k_n}\gamma c^{l_n}h_n(\delta)=
c^{k_n}ed^{s_n}=c^{k_n}ee^{-1}\gamma c^{s_n}\gamma^{-1}e$, 
hence $h_n(\delta)=c^{s_n-l_n}\gamma^{-1}e$. Now observe that, by the structure of a tower, we cannot have a conjugate of $C_{\F}(c)$ being 
a peg in a floor at a lower level. In particular, the element $\delta$ must be an element of $\F$ of the form $c^l\gamma^{-1}e$, as desired. 
\end{proof}

\begin{theorem}\label{BoundDoubleCosets}
Let $G$ be a group that has the structure of a tower $\mathcal{T}(G,\F)$. Let 
$(h_n)_{n<\omega}:G\to\F$ be a test sequence for $\mathcal{T}(G,\F)$. Let $g_1,g_2,g_3$ be elements in $G$. 
Let $k,l$ be natural numbers and $E_{4_{k,l}}$ be the equivalence relation of $k,l$-double-cosets 
(see Definition \ref{Imaginaries}). Suppose $\{[(h_n(g_1),h_n(g_2),h_n(g_3))]_{E_{4_{k,l}}} \ | \ n<\omega\}$ is finite. 
Then the set $\{[(h(g_1),h(g_2),h_n(g_3))]_{E_{4_{k,l}}} \ | \ h:G\rightarrow\F, \ \ h\upharpoonright\F=Id \}$ is bounded by $k\cdot l$. 
\end{theorem}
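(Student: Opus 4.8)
The plan is to mimic the proof of Theorem~\ref{BoundLeftCosets}, this time invoking Corollary~\ref{UniCent} twice and Lemma~\ref{DoubleCosets} once. First I would observe that an $E_{4_{k,l}}$-class remembers the centralizers of its first and third coordinates, so the finiteness of $\{[(h_n(g_1),h_n(g_2),h_n(g_3))]_{E_{4_{k,l}}} : n<\omega\}$ forces both $\{C_{\F}(h_n(g_1)) : n<\omega\}$ and $\{C_{\F}(h_n(g_3)) : n<\omega\}$ to be finite. Applying Corollary~\ref{UniCent} to $g_1$ and to $g_3$ yields elements $\delta,\gamma\in\F$, which I may take to generate the corresponding cyclic centralizers, with $C_{\F}(f_n(g_1))=C_{\F}(\delta)$ and $C_{\F}(f_n(g_3))=C_{\F}(\gamma)$ for every test sequence $(f_n)_{n<\omega}$ and every $n$. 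If $\delta=1$ or $\gamma=1$, then $f_n(g_1)$ or $f_n(g_3)$ is identically trivial and the $E_{4_{k,l}}$-class is the constant degenerate one, so I may assume $\delta,\gamma\neq1$; in particular neither $h_n(g_1)$ nor $h_n(g_3)$ is ever trivial, so each $E_{4_{k,l}}$-class additionally records the double coset $C_{\F}^k(\delta)\,h_n(g_2)\,C_{\F}^l(\gamma)$, whence $\{C_{\F}^k(\delta)\,h_n(g_2)\,C_{\F}^l(\gamma) : n<\omega\}$ is finite.

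This places $g_2$ under the hypothesis of Lemma~\ref{DoubleCosets} with $p=k$, $q=l$ and the chosen $\delta,\gamma$. If $g_2\in\F$, the set $\{[(f_n(g_1),f_n(g_2),f_n(g_3))]_{E_{4_{k,l}}} : n<\omega\}$ is a singleton for every test sequence and we are done; otherwise Lemma~\ref{DoubleCosets} furnishes a fixed element $\beta\in\F$, a descending chain of free abelian flats, and the accompanying tuples with their commutation properties. Fixing a test sequence $(f_n)_{n<\omega}$, I would then prove that $f_n(g_2)\in C_{\F}(\delta)\,\beta\,C_{\F}(\gamma)$ for all $n$, by induction on the number of free abelian flats occurring in that analysis of $g_2$, just as in the final part of the proof of Theorem~\ref{BoundLeftCosets}. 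In the inductive step one takes the top-flat normal form $g_2=a_{11}b_{11}a_{12}\cdots a_{1k_1}b_{1k_1}a_{1k_1+1}$, uses that each stable letter $b_{1t}\in(A_{i_1}\oplus\Z)\setminus A_{i_1}$ commutes with the element of $A_{i_1}$ recorded by the lemma (a conjugate of $\delta$ for the left stable letters, a conjugate of $\gamma$ for the last one), rewrites $f_n(b_{1t})$ accordingly, and pushes all resulting powers of $\delta$ to the far left and all powers of $\gamma$ to the far right; this collapses $f_n(g_2)$ to $\delta^{a_n}\,f_n(a_{11}a_{12}\cdots a_{1k_1+1})\,\gamma^{b_n}$ for suitable integers, and $a_{11}a_{12}\cdots a_{1k_1+1}$ is the element living in one fewer flat to which the induction hypothesis applies via the descent relation of Lemma~\ref{DoubleCosets}. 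The base case uses the commutation data together with the membership $a_{m1}a_{m2}\cdots a_{mk_m+1}\in C_{\F}(\delta)\beta C_{\F}(\gamma)$ supplied by the lemma.

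Finally, since $C_{\F}(\delta)$ and $C_{\F}(\gamma)$ are infinite cyclic, $C_{\F}^k(\delta)$ and $C_{\F}^l(\gamma)$ have indices $k$ and $l$ respectively, so the double coset space $C_{\F}^k(\delta)\backslash C_{\F}(\delta)\beta C_{\F}(\gamma)/C_{\F}^l(\gamma)$ has at most $k\cdot l$ elements. As the centralizers $C_{\F}(f_n(g_1))$ and $C_{\F}(f_n(g_3))$ are uniform, the $E_{4_{k,l}}$-class of $(f_n(g_1),f_n(g_2),f_n(g_3))$ is determined by the image of $f_n(g_2)$ in this space, and the bound $k\cdot l$ follows. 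The step I expect to be the main obstacle is the inductive rewriting above: in contrast with the left-coset situation of Theorem~\ref{BoundLeftCosets}, a single flat's normal form may carry several stable letters, so one must keep careful track of which conjugate of $\delta$ (resp.\ $\gamma$) each $b_{1t}$ commutes with and verify that all $\delta$-contributions genuinely move to the left while all $\gamma$-contributions move to the right. Once Lemma~\ref{DoubleCosets} is granted, however, this is the same bookkeeping already carried out in the proofs of Lemmata~\ref{InfCentr} and \ref{DoubleCosets} and of Theorem~\ref{BoundLeftCosets}, so I do not anticipate an essentially new difficulty.
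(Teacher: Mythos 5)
Your argument is correct and is precisely the one the paper intends: it leaves this proof to the reader, the expected route being the double-coset analogue of Theorem~\ref{BoundLeftCosets}, i.e.\ Corollary~\ref{UniCent} applied to $g_1$ and $g_3$, Lemma~\ref{DoubleCosets} applied to $g_2$, the same induction on the number of free abelian flats to get $f_n(g_2)\in C_{\F}(\delta)\beta C_{\F}(\gamma)$, and the final count of at most $k\cdot l$ double cosets. Your handling of the degenerate (trivial coordinate) cases is in fact slightly more careful than what the paper does in the left-coset case.
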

\begin{proof}
Since the set $\{[(h_n(g_1),h_n(g_2),h_n(g_3))]_{E_{4_{k,l}}} \ | \ n<\omega\}$ is finite we have that the sets  
$\{C_{\F}(h_n(g_1)) \ | \ $ $ n<\omega\}$ and $\{C_{\F}(h_n(g_3)) \ | \ $ $ n<\omega\}$  are finite. Thus, by Lemma \ref{UniCent} 
there exist non-trivial elements $c,d\in\F$ so that, 
for any morphism from $G$ to $\F$ that stays the identity on $\F$, the centraliser of the image of $g_1$ is $C_{\F}(c)$ and the 
centraliser of the image of $g_3$ is $C_{\F}(d)$.  

On the other hand, again by the finiteness of $\{[(h_n(g_1),h_n(g_2),h_n(g_3))]_{E_{4_{k,l}}} \ | \ n<\omega\}$ we have that 
$\{C^k_{\F}(c) \cdot h_n(g_2)\cdot C^k_{\F}(d) \ | \ n<\omega\}$ is finite. Thus, by Lemma \ref{DoubleCosets}, 
the number of double cosets $C^k_{\F}(c) \cdot h(g_2)\cdot C^k_{\F}(d)$ 
by the image of $g_2$ under any morphism from $G$ to $\F$ that stays the identity on $\F$, is bounded by $k\cdot l$.  
\end{proof}

\section{Diophantine envelopes}\label{EnvelopeGraded}
In this section we present {\em Diophantine envelopes}. A tool introduced by Sela in \cite{SelaImaginaries}. 

Before recording the main result of this section we need to explain some construction. To each solid limit group $Sld$ 
with a nontrivial $JSJ$ decomposition we may assign a group $Comp(Sld)$, 
its {\em completion}, with the following properties: 

\begin{itemize}
 \item the group $Comp(Sld)$ has the structure of a tower over the group $Sld$;
 \item there exists an embedding $f_t$ of $Sld$ into $Comp(Sld)$ whose image is not the solid limit group 
 in the bottom of the tower;
 \item suppose that $f_b$ embeds $Sld$ onto the solid limit group in the bottom of $Comp(Sld)$, then 
 for any two strictly solid morphisms, say $s_1, s_2:Sld\rightarrow \F$, that belong to the same strictly solid family, 
 there exists a morphism $S:Comp(Sld)\rightarrow \F$ such that $s_1=S\circ f_b$ and $s_2=S\circ f_t$. 
\end{itemize}

Using the completion of a solid limit group $Sld$ we may transform a tower $\mathcal{T}(G,Sld)$ over it in the following way: 

\begin{itemize}
 \item since $f_t$ embeds $Sld$ into $Comp(Sld)$ we replace the bottom group 
 of the tower with the group $Comp(Sld)$ by gluing the edges at the image of $Sld$ 
 by $f_t$; 
 \item since $Comp(Sld)$ has the structure of a tower over $Sld$, we expand its floors; 
 \item we merge pegs of free abelian floors that might happen to be conjugates of pegs that existed in the tower 
 $\mathcal{T}(G,Sld)$. 
\end{itemize}

The above construction takes as an input a tower over a solid limit group $\mathcal{T}(G,Sld)$ and outputs 
$\mathcal{GT}(G*_{Sld}Comp(Sld),Sld)$ a tower over the same solid limit group but with some intermediate floors 
coming from the tower structure of the completion of the solid limit group. We call 
such a tower a {\em graded tower}. 

Consider a graded tower $\mathcal{GT}(G*_{Sld}Comp(Sld),Sld)$ and a strictly solid family of morphisms 
$[s]:Sld\to\F$. To this pair of objects we assign a tower $\mathcal{GT}_{s}$ over $\F$ by replacing the bottom solid limit group 
with $\F$ and glue the edges on their images by some strictly solid morphism $s$ in the strictly solid family 
$[s]$. The resulting tower might depend on the choice of $s$, but we can still recover any other strictly solid morphism in 
the family of $s$ by the properties of the completion $Comp(Sld)$. The group associated to $\mathcal{GT}_{s}$ 
is the group generated by the groups $G*_{Sld}Comp(Sld)$ and $\F$ modulo the relation imposed by the morphism 
$s:f_b(Sld)\rightarrow \F$. In particular, to any element of $G*_{Sld}Comp(Sld)$ we can assign canonically 
its homomorphic image to the group corresponding to $\mathcal{GT}_{s}$. 

We define a {\em graded test sequence} of 
$\mathcal{GT}(G*_{Sld}Comp(Sld),Sld)$ with respect to the strictly solid family $[s]$ to be a test sequence of some 
(ungraded) tower associated to $\mathcal{GT}(G*_{Sld}Comp(Sld),Sld)$ and $[s]$ in the way explained above.
 
Finally, when we want to be explicit about a generating set, say $\bar{u}$, of a group $G$ that has the structure of a tower 
$\mathcal{T}(G,H)$, then we denote the tower by $\mathcal{T}(G,H)[\bar{u}]$. 
 
\begin{fact}[Diophantine envelope]\label{GradedEnvelope}
Suppose $\phi(\bar{x},\bar{y})$ is a first order formula over a nonabelian free group $\F$. 
Then there exist finitely many graded towers $\mathcal{GT}^1(G_1, Sld^1)[(\bar{u},$ $\bar{x},\bar{y})], 
\ldots,\mathcal{GT}^k(G_k,$ $Sld^k)[(\bar{u},\bar{x},\bar{y})]$, such that:
\begin{itemize}
 \item[(i)] for each $i\leq k$, the solid limit group $Sld^i$ is solid with respect to 
 the subgroup $\langle\bar{y}\rangle$; 
 \item[(ii)] for each $i\leq k$, there exists  
 a strictly solid family of morphisms $[s]:Sld^i\to\F$ and 
 a graded test sequence, say $(h_n)_{n<\omega}$, that corresponds to $[s]$, so that 
 $\F\models \phi(h_n(\bar{x}),h_n(\bar{y}))$ for every $n<\omega$;
 \item[(iii)] If $\F\models\phi(\bar{b},\bar{c})$, then there exist some $i\leq k$ and:
 \begin{itemize}
 \item[\textbullet] a morphism $h:G^i\to\F$, such that $h(\bar{x},\bar{y})=(\bar{b},\bar{c})$.  
 \item[\textbullet] a strictly solid family of morphisms $[s]:Sld^i\to\F$ with $s(\bar{y})=\bar{c}$ and a graded test sequence 
 $(h_n)_{n<\omega}$ of $\mathcal{GT}^i$ that corresponds to $[s]$ such that $\F\models\phi(h_n(\bar{x}),\bar{c})$. 
 \end{itemize}
\end{itemize}
\end{fact}

\begin{remark}
\ 
\begin{itemize}
\item In the previous fact, by an abuse of language, we refer to $\bar{x}$ (respectively $\bar{y}$) and the tuple it is 
assigned to in the ungraded tower by the same name;
\item If we want to refer to the third part of this theorem with respect to some solution $(\bar{b},\bar{c})$ of $\phi$ in $\F$, 
we will say that $(\bar{b},\bar{c})$ {\em factors through} the graded tower $\mathcal{GT}^i$ and the strictly solid family $[s]$. 
\end{itemize}
\end{remark}

\section{The finite cover property}\label{FCP}
In this section we prove the main result of the paper. We start by proving that the ``there exists infinitely many'' quantifier 
can be eliminated in the real sort.

\begin{theorem}\label{RealSort}
Let $\phi(x,\bar{y})$ be a first order formula over a non abelian free group $\F$. Then there exists $n<\omega$ 
such that, for any $\bar{c}\in\F$, if $\phi(x,\bar{c})$ is finite, then $\abs{\phi(x,\bar{c})}\leq n$.
\end{theorem}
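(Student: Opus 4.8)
The plan is to argue by contradiction using the graded Diophantine envelope (Theorem~\ref{GradedEnvelope}) together with the boundedness results for images of elements in a limit tower under test sequences, proved in Section~\ref{CoreMaterial}. Suppose no such $n$ exists; then for every $k<\omega$ there is a parameter $\bar c_k\in\F$ such that $\phi(x,\bar c_k)$ is finite but has more than $k$ elements. Apply Theorem~\ref{GradedEnvelope} to $\phi(x,\bar y)$ to obtain finitely many graded limit towers $\mathcal{GT}^1,\ldots,\mathcal{GT}^k$. Since there are finitely many towers and infinitely many $k$, by pigeonhole there is a single graded tower $\mathcal{GT}^i$, with solid base $Sld^i_{\langle\bar y\rangle}$, through which arbitrarily large finite fibers $\phi(x,\bar c_k)$ must (in a suitable sense) factor.

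Next I would analyze what it means for many elements of a finite fiber $\phi(\F,\bar c)$ to factor through the fixed tower $\mathcal{GT}^i$. By part (ii) of Theorem~\ref{GradedEnvelope}, each solution $b\in\phi(\F,\bar c)$ gives a strictly solid family $[s]$ extending the morphism $f:\bar y\mapsto\bar c$, an associated ungraded tower $\mathcal{T}^i_{[s]}(G,\F)$, a test sequence $(h_n)_n$ for it, and a morphism $h:G\to\F$ restricting to the identity on $\F$ with $h(\bar x)=b$. Theorem~\ref{BoundSolid} bounds, by a constant $N$ depending only on $Sld^i$, the number of strictly solid families extending a given $f$ that lie in distinct strictly solid families; so for $k>N$ many of the solutions $b$ must arise from the \emph{same} strictly solid family, hence from the same ungraded tower $\mathcal{T}^i_{[s]}(G,\F)$. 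Now $\bar x$ is a fixed tuple of elements of the tower group $G$, and the element $x$ we care about lives in $G$. If $x\in\F$ (height $0$), then $h(x)$ is forced and contributes only one value. Otherwise $x\in G\setminus\F$, and by Proposition~\ref{InfRealSort} (and its Corollary~\ref{UniRealSort}), the set $\{h_n(x):n<\omega\}$ is infinite for \emph{every} test sequence for $\mathcal{T}^i_{[s]}$; since $\F\models\phi(h_n(\bar x),\bar c)$ for all $n$, all these infinitely many distinct values $h_n(x)$ lie in $\phi(\F,\bar c)$, contradicting its finiteness.

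So the only way the fiber $\phi(\F,\bar c)$ can be finite is that the contribution of the fixed tower $\mathcal{GT}^i$ to it consists of elements $x$ of height $0$, i.e. elements of $\F$, and Theorem~\ref{UniRealSort} says each strictly solid family then contributes exactly one value; combined with the bound $N$ from Theorem~\ref{BoundSolid} on the number of strictly solid families extending a fixed $f$, and the finite number $k$ of towers in the envelope, the total size of any finite fiber is bounded by $n:=k\cdot N$. This contradicts the existence of fibers of size $>k\cdot N$, and the theorem follows.

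The main obstacle I anticipate is the bookkeeping in the pigeonhole/"factoring" step: one must be careful that "the finite fiber $\phi(\F,\bar c_k)$ has $>k$ elements" genuinely forces $>k$ elements to factor through one of the finitely many towers, and that within a single ungraded tower $\mathcal{T}^i_{[s]}$ the different solutions $b$ either coincide (because an element of height $0$ has a forced image, by Theorem~\ref{UniRealSort}) or force an infinite fiber (by Proposition~\ref{InfRealSort}). In other words, the heart of the argument is the dichotomy: an element of $G$ is either in $\F$ (bounded, unique image) or not (infinitely many images under any test sequence), and this dichotomy is exactly what Section~\ref{CoreMaterial} was built to supply; the role of Theorem~\ref{BoundSolid} is just to make the number of relevant strictly solid families finite and uniform.
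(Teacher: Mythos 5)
Your proposal is correct and follows essentially the same route as the paper: take the graded Diophantine envelope of $\phi(x,\bar{y})$, use Theorem \ref{BoundSolid} to bound the number of strictly solid families over a fixed $f:\langle\bar{y}\rangle\to\F$, and then apply the dichotomy of Section \ref{CoreMaterial} (Proposition \ref{InfRealSort}/Theorem \ref{UniRealSort}) to see that each family contributes at most one solution, since otherwise the test-sequence values would give infinitely many solutions. The only difference is cosmetic bookkeeping (your pigeonhole over towers and bound $k\cdot N$ versus the paper's direct bound $r_1+\ldots+r_k$), so the argument matches the paper's proof.
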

\begin{proof}
Suppose not, and $(\bar{c}_l)_{l<\omega}$ be a sequence of tuples such that, for each $l$, $l<\abs{\phi(\bar{x},\bar{c}_l)}<\infty$.  

We consider the Diophantine envelope, $\mathcal{GT}^1(G_1,Sld^1)[(\bar{u}, x,\bar{y})]
,\ldots,\mathcal{GT}^k(G_k,$ $Sld^k)[\bar{u},x,\bar{y}]$, of $\phi(x,\bar{y})$  as given by Fact \ref{GradedEnvelope}. 
Let $r_i$ be the bound on the number of strictly solid families of morphisms for the solid limit group $Sld^i$ 
in the bottom floor of $\mathcal{GT}^i$ as given by Theorem \ref{BoundSolid}.

Let $n$ be a natural number with $n>r_1+\ldots+r_k$. By Fact \ref{GradedEnvelope}(iii), each solution of $\phi(x,\bar{c}_n)$ 
factors through some graded tower in the Diophantine envelope of $\phi$. Fix a solution $b$ and consider 
the graded tower $\mathcal{GT}^i$ and the strictly solid family of morphisms $[s]$ that $(b,\bar{c_n})$ factor through. 
Since $\phi(x,\bar{c}_n)$ is finite we must have 
that $x$ takes finitely many values under the test sequence of the ungraded tower $\mathcal{GT}[s]^i$. 
Thus, by Theorem \ref{UniRealSort}, $x$ takes a unique value for each morphism of the (ungraded) tower $\mathcal{GT}[s]^i$. 
In addition, by the first bullet of Fact \ref{GradedEnvelope}(iii), it must take the value $b$. In particular, any solution of $\phi(x,\bar{c}_n)$ 
that factors through the graded tower $\mathcal{GT}^i$ and the strictly solid family of morphisms $[s]$ must take the value $b$. 
Hence, we can have at most $r_i$ solutions that factor through it, a contradiction.
\end{proof}

We continue with the sort for conjugacy classes.

\begin{theorem}\label{ConjugacySort}  
Let $\phi(x,\bar{y})$ be a first order formula over a non abelian free group $\F$. Then there exists $n<\omega$ 
such that, for any $\bar{c}\in\F$, if $\phi(x,\bar{c})$ has finitely many solutions up to conjugation, 
then the number of conjugacy classes $\abs{\phi(x,\bar{c})}_{E_1}$ is bounded by $n$.
\end{theorem}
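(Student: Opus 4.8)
The plan is to mimic the proof of Theorem \ref{RealSort} almost verbatim, replacing the use of Theorem \ref{UniRealSort} by its conjugacy-class analogue Theorem \ref{UniConjSort}. Concretely, I would argue by contradiction: suppose there is no such $n$, so for each $l<\omega$ there is a tuple $\bar{c}_l\in\F$ with $l<\abs{\phi(x,\bar{c}_l)}_{E_1}<\infty$, i.e. $\phi(x,\bar{c}_l)$ has at least $l$ but only finitely many solutions up to conjugacy. Apply the graded Diophantine envelope (Theorem \ref{GradedEnvelope}) to $\phi(x,\bar{y})$, obtaining finitely many graded $\Gamma$-limit towers $\mathcal{GT}^1(G_1,Sld^1_{\langle\bar{y}\rangle})[(\bar{u},x,\bar{y})],\ldots,\mathcal{GT}^k(G_k,Sld^k_{\langle\bar{y}\rangle})[(\bar{u},x,\bar{y})]$. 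For each $i\leq k$ let $r_i$ be the bound of Theorem \ref{BoundSolid} on the number of strictly solid families of morphisms for the solid limit group $Sld^i_{\langle\bar{y}\rangle}$ in the ground floor of $\mathcal{GT}^i$, and set $n:=r_1+\ldots+r_k$.

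Now fix $l>n$. Each of the (more than $l$) conjugacy classes of solutions of $\phi(x,\bar{c}_l)$ is witnessed by some solution $(\bar{b},\bar{c}_l)$, and by Theorem \ref{GradedEnvelope}(ii) this solution factors through one of the graded towers $\mathcal{GT}^i$: it determines a morphism $f:\langle\bar{y}\rangle\to\F$ with $\bar{y}\mapsto\bar{c}_l$, a strictly solid family $[s]$ extending $f$, an ungraded tower $\mathcal{T}^i_{[s]}(G,\F)$ with a test sequence $(h_m)_{m<\omega}$, and a morphism $h:G\to\F$ with $h\upharpoonright\F=\id$ and $h(x)=\bar{b}$, such that $\F\models\phi(h_m(x),\bar{c}_l)$ for all $m$. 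Since $\phi(x,\bar{c}_l)$ has only finitely many solutions up to conjugacy, the set $\{[h_m(x)]\ |\ m<\omega\}$ of conjugacy classes is finite; hence by Theorem \ref{UniConjSort} the element $x\in G$ satisfies $[f_m(x)]=[\gamma]$ for a fixed $\gamma\in\F$ along \emph{every} test sequence $(f_m)_{m<\omega}$ for $\mathcal{T}^i_{[s]}(G,\F)$, and in particular $[\bar{b}]=[h(x)]=[\gamma]$ is determined by the strictly solid family $[s]$ (and the tower $\mathcal{GT}^i$) alone. Therefore, for a given tower $\mathcal{GT}^i$, distinct conjugacy classes of solutions give rise to distinct strictly solid families $[s]$, so at most $r_i$ conjugacy classes can factor through $\mathcal{GT}^i$. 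Summing over $i$, $\phi(x,\bar{c}_l)$ has at most $r_1+\ldots+r_k=n<l$ conjugacy classes of solutions, contradicting the choice of $\bar{c}_l$.

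The only point requiring a little care is the passage from ``$[\bar{b}]$ is determined by $[s]$'' to ``distinct conjugacy classes $\Rightarrow$ distinct strictly solid families'': one must make sure that, within a single graded tower $\mathcal{GT}^i$, the conjugacy class $[\bar{b}]$ of the value of $x$ truly depends only on the strictly solid family $[s]$ in the ground floor, which is exactly the content of combining Theorem \ref{GradedEnvelope}(i)--(ii) (the test sequence always stays inside $\phi$) with Theorem \ref{UniConjSort} (the conjugacy class of the image of $x$ along any test sequence of $\mathcal{T}^i_{[s]}$ is a fixed $[\gamma]$). I expect this bookkeeping—keeping straight that ``factoring through $\mathcal{GT}^i$'' attaches to each solution a strictly solid family, and that the map (conjugacy class of solution) $\mapsto$ (strictly solid family) is well-defined and injective once we restrict to a fixed $\mathcal{GT}^i$—to be the main (and essentially only) obstacle, and it is handled precisely as in the proof of Theorem \ref{RealSort}.
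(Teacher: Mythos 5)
Your proposal is correct and is exactly the paper's argument: the paper proves Theorem \ref{ConjugacySort} by declaring the proof identical to that of Theorem \ref{RealSort} with Theorem \ref{UniConjSort} substituted for Theorem \ref{UniRealSort}, which is precisely the substitution and bookkeeping (envelope from Theorem \ref{GradedEnvelope}, bound $r_1+\ldots+r_k$ from Theorem \ref{BoundSolid}, conjugacy class of the image of $x$ determined by the strictly solid family) that you carry out. No further comment is needed.
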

\begin{proof}
The proof is identical to the proof of Theorem \ref{RealSort}, using Theorem \ref{UniConjSort} instead of Theorem \ref{UniRealSort}.
\end{proof}

\begin{theorem}\label{CosetSort}  
Let $\phi(x_1,x_2,\bar{y})$ be a first order formula over a non abelian free group $\F$. Then there exists $n<\omega$ 
such that, for any $\bar{c}\in\F$, if $\phi(\bar{x},\bar{c})$ has finitely many solutions up to the equivalence relation $E_{2,m}$, 
then the number of $m$-left-cosets $\abs{\phi(\bar{x},\bar{c})}_{E_{2,m}}$ is bounded by $n$.
\end{theorem}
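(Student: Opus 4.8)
The plan is to follow exactly the pattern established in the proofs of Theorems \ref{RealSort} and \ref{ConjugacySort}, but now invoking the coset-specific bounding result, Theorem \ref{BoundLeftCosets}, in place of Theorem \ref{UniRealSort}. Concretely, suppose for contradiction that no such $n$ exists, so there is a sequence of tuples $(\bar{c}_l)_{l<\omega}$ with $l<\abs{\phi(\bar{x},\bar{c}_l)}_{E_{2,m}}<\infty$ for each $l$. First I would apply Theorem \ref{GradedEnvelope} to the formula $\phi(x_1,x_2,\bar{y})$ (treating the pair $\bar{x}=(x_1,x_2)$ as the free variables) to obtain a graded Diophantine envelope consisting of finitely many graded limit towers $\mathcal{GT}^1(G_1,Sld^1_{\langle\bar{y}\rangle})[(\bar{u},\bar{x},\bar{y})],\ldots,\mathcal{GT}^k(G_k,Sld^k_{\langle\bar{y}\rangle})[(\bar{u},\bar{x},\bar{y})]$. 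For each $i\leq k$ let $r_i$ be the bound from Theorem \ref{BoundSolid} on the number of strictly solid families of morphisms for the solid limit group $Sld^i_{\langle\bar{y}\rangle}$ in the ground floor of $\mathcal{GT}^i$.

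Next I would choose a natural number $n$ large enough — the precise threshold is $n > m\cdot(r_1+\ldots+r_k)$, reflecting the extra multiplicative factor $m$ appearing in the conclusion of Theorem \ref{BoundLeftCosets} (as opposed to the factor $1$ implicit in the real-sort case) — and examine the instance $\phi(\bar{x},\bar{c}_n)$. By Theorem \ref{GradedEnvelope}(ii), every solution $(\bar{b}_1,\bar{b}_2)$ of $\phi(\bar{x},\bar{c}_n)$ factors through one of the graded towers $\mathcal{GT}^i$; fixing such an $i$, one gets a morphism $f:\langle\bar{y}\rangle\to\F$ with $\bar{y}\mapsto\bar{c}_n$, a strictly solid family $[s]$ extending $f$, the corresponding ungraded tower $\mathcal{T}^i_{[s]}$, a test sequence $(h_n)_{n<\omega}$ for it, and a morphism $h:G_i\to\F$ specializing $(\bar{x},\bar{y})$ to $(\bar{b}_1,\bar{b}_2,\bar{c}_n)$ with $\F\models\phi(h_n(\bar{x}),\bar{c}_n)$ for all $n$. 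Since $\phi(\bar{x},\bar{c}_n)$ has only finitely many $E_{2,m}$-classes, the set $\{[(h_n(x_1),h_n(x_2))]_{E_{2,m}} : n<\omega\}$ must be finite — here I would note that a test sequence for the ungraded tower gives infinitely many morphisms with images of $(\bar{x},\bar{c}_n)$ all satisfying $\phi$, so finiteness up to $E_{2,m}$ forces the images of the pair $(x_1,x_2)$ to be eventually $E_{2,m}$-equivalent. Then Theorem \ref{BoundLeftCosets} applies to the elements $g_1,g_2\in G_i$ (the tuples named $x_1,x_2$ in the tower), telling us that for any test sequence for $\mathcal{T}^i_{[s]}$ the number of $E_{2,m}$-classes realised is bounded by $m$.

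Counting then finishes the argument: for each $i\leq k$ there are at most $r_i$ distinct strictly solid families (by Theorem \ref{BoundSolid}), and each contributes at most $m$ distinct $E_{2,m}$-classes among the solutions that factor through $\mathcal{GT}^i$ (by Theorem \ref{BoundLeftCosets}); hence the total number of $E_{2,m}$-classes of solutions of $\phi(\bar{x},\bar{c}_n)$ is at most $m\cdot(r_1+\ldots+r_k) < n$, contradicting $n < \abs{\phi(\bar{x},\bar{c}_n)}_{E_{2,m}}$. Since this yields the desired uniform bound, the theorem follows.

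The part requiring the most care — and the only genuine point of difference from Theorem \ref{RealSort} — is the bookkeeping step connecting the finiteness of $\abs{\phi(\bar{x},\bar{c}_n)}_{E_{2,m}}$ to the finiteness of $\{[(h_n(x_1),h_n(x_2))]_{E_{2,m}} : n<\omega\}$ and then tracking how the bound $m$ from Theorem \ref{BoundLeftCosets} interacts with the bound $r_i$ from Theorem \ref{BoundSolid}, so that the threshold $n > m\cdot(r_1+\ldots+r_k)$ is the correct one. Everything else is a verbatim transcription of the earlier arguments. One should also observe, as was done earlier in the paper, that one need not treat the $E_{3,m}$ (right-coset) sort separately, since $m$-left-cosets eliminate $m$-right-cosets; and the analogous statement for the double-coset sort $E_{4,k,l}$ follows identically using Theorem \ref{BoundDoubleCosets} in place of Theorem \ref{BoundLeftCosets}, with the threshold $n > k\cdot l\cdot(r_1+\ldots+r_k)$.
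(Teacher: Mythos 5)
Your proposal is correct and is essentially the paper's own argument: the paper proves Theorem \ref{CosetSort} by repeating the proof of Theorem \ref{RealSort} verbatim, choosing $n>m\cdot(r_1+\ldots+r_k)$ and replacing Theorem \ref{UniRealSort} with Theorem \ref{BoundLeftCosets}, exactly as you do. Your bookkeeping of how the factor $m$ from Theorem \ref{BoundLeftCosets} multiplies the bounds $r_i$ from Theorem \ref{BoundSolid} matches the paper's intended counting.
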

\begin{proof}
The proof is identical to the proof of Theorem \ref{RealSort}, by choosing $n>m\cdot (r_1+\ldots+r_k)$ and using Theorem \ref{BoundLeftCosets} 
instead of Theorem \ref{UniRealSort}.
\end{proof}

\begin{theorem}\label{DoubleCosetSort}  
Let $\phi(x_1,x_2,x_3,\bar{y})$ be a first order formula over a non abelian free group $\F$. Then there exists $n<\omega$ 
such that, for any $\bar{c}\in\F$, if $\phi(\bar{x},\bar{c})$ has finitely many solutions up to the equivalence relation $E_{4_{p,q}}$, 
then the number of $p,q$-double-cosets $\abs{\phi(\bar{x},\bar{c})}_{E_{4_{p,q}}}$ is bounded by $n$.
\end{theorem}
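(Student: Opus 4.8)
The plan is to follow verbatim the structure of the proof of Theorem \ref{RealSort}, simply replacing the ingredient that controls how many solutions can factor through a single graded tower. Suppose for contradiction that no such $n$ exists, so there is a sequence $(\bar{c}_l)_{l<\omega}$ with $l<\abs{\phi(\bar{x},\bar{c}_l)}_{E_{4_{p,q}}}<\infty$ for every $l$. Apply Theorem \ref{GradedEnvelope} to $\phi(x_1,x_2,x_3,\bar{y})$ to obtain a graded Diophantine envelope consisting of finitely many graded $\F$-limit towers $\mathcal{GT}^1,\ldots,\mathcal{GT}^k$, and let $r_i$ be the bound, coming from Theorem \ref{BoundSolid}, on the number of strictly solid families of morphisms of the solid limit group $Sld^i_{\langle\bar{y}\rangle}$ sitting in the ground floor of $\mathcal{GT}^i$.

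Next I would choose $n$ large; the right threshold here is $n>p\cdot q\cdot(r_1+\ldots+r_k)$, mirroring the choice $n>m\cdot(r_1+\ldots+r_k)$ in the proof of Theorem \ref{CosetSort} but now reflecting that a double coset is controlled by the two parameters $p$ and $q$. Then, for the parameter $\bar{c}_n$, every solution $(\bar{b}_1,\bar{b}_2,\bar{b}_3)$ of $\phi(\bar{x},\bar{c}_n)$ factors through some $\mathcal{GT}^i$ by Theorem \ref{GradedEnvelope}(ii), and this factorization selects a strictly solid family of morphisms of $Sld^i_{\langle\bar{y}\rangle}$ together with the associated ungraded tower $\mathcal{T}^i_{[s]}(G,\F)$ and a test sequence $(h_n)_{n<\omega}$ for it. Since $\phi(\bar{x},\bar{c}_n)$ has only finitely many solutions up to $E_{4_{p,q}}$, the images of the triple $(x_1,x_2,x_3)$ under the test sequence represent only finitely many $(p,q)$-double cosets; hence Theorem \ref{BoundDoubleCosets} applies and tells us that the number of such double cosets along any test sequence for $\mathcal{T}^i_{[s]}$ is bounded by $p\cdot q$. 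Combining this with the bound $r_i$ on the number of strictly solid families for the ground floor solid group, at most $p\cdot q\cdot r_i$ double-coset classes of solutions can factor through $\mathcal{GT}^i$. Summing over $i\leq k$ gives at most $p\cdot q\cdot(r_1+\ldots+r_k)<n$ classes, contradicting $\abs{\phi(\bar{x},\bar{c}_n)}_{E_{4_{p,q}}}>n$.

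The only genuine content beyond bookkeeping is the input Theorem \ref{BoundDoubleCosets}, whose proof the paper leaves to the reader but which itself rests on Lemma \ref{DoubleCosets}; so the ``hard part'' has effectively been discharged in Section \ref{CoreMaterial}, and here it is just a matter of assembling the pieces in the correct order and getting the numerical bound right. One subtlety worth stating carefully is that, as in the earlier coset and conjugacy proofs, one must observe that the finiteness of the set of $E_{4_{p,q}}$-classes of solutions forces, for each of the two centralizer coordinates, the corresponding family of centralizers along the test sequence to be finite, so that Corollary \ref{UniCent} (applied to $g_2$ and to $g_3$) pins down the non-trivial elements $\delta,\gamma\in\F$ whose powers generate the relevant cyclic subgroups; this is precisely what makes the hypotheses of Lemma \ref{DoubleCosets} available and is implicitly what Theorem \ref{BoundDoubleCosets} packages. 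With that in hand the argument is complete; I would simply write ``The proof is identical to the proof of Theorem \ref{RealSort}, by choosing $n>p\cdot q\cdot(r_1+\ldots+r_k)$ and using Theorem \ref{BoundDoubleCosets} instead of Theorem \ref{UniRealSort}.''
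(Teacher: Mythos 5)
Your proposal is correct and follows exactly the paper's own argument: the paper proves this by repeating the proof of Theorem \ref{RealSort} with the choice $n>p\cdot q\cdot(r_1+\ldots+r_k)$ and with Theorem \ref{BoundDoubleCosets} in place of Theorem \ref{UniRealSort}, which is precisely what you do. Your additional remark about Corollary \ref{UniCent} pinning down the elements $\delta,\gamma$ is a reasonable spelling-out of what Theorem \ref{BoundDoubleCosets} packages, and does not change the route.
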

\begin{proof}
The proof is identical to the proof of Theorem \ref{RealSort}, by choosing $n>p\cdot q\cdot (r_1+\ldots+r_k)$ and using 
Theorem \ref{BoundDoubleCosets} instead of Theorem \ref{UniRealSort}.
\end{proof}

Finally combining Theorems \ref{RealSort}, \ref{ConjugacySort}, \ref{CosetSort} and \ref{DoubleCosetSort}, we get:

\begin{theorem}
Let $\F$ be a non abelian free group. Then $\mathcal{T}h(\F)^{eq}$ eliminates the $\exists^{\infty}$-quantifier.  
\end{theorem}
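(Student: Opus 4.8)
The plan is to peel off the imaginary sorts one at a time, reducing any $\mathcal{L}^{eq}$-formula to the single‑variable statements over the basic sorts that are already settled by Theorems \ref{RealSort}, \ref{ConjugacySort}, \ref{CosetSort} and \ref{DoubleCosetSort}. Three elementary observations organize the reduction. First, in any structure the image of a finite set under a coordinate projection is finite, so if $\phi(\bar x,\bar y)$ has $\bar x=(x_1,\dots,x_l)$ and $\phi(\mathcal{M},\bar c)$ is finite, then so is each $\phi_i(x_i,\bar c):=\exists x_1\cdots\widehat{x_i}\cdots x_l\,\phi(\bar x,\bar c)$; a uniform bound $n_i$ on $|\phi_i|$ then yields $|\phi(\mathcal{M},\bar c)|\le n_1\cdots n_l$. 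Second, by Lemma \ref{ImagSorts} we may always pull parameters (and a single imaginary variable $x$ of sort $S_E$) back through the class maps $f_E$, so that it is enough to treat $\mathcal{L}$-formulas $\psi(\bar z,\bar w)$ in real variables, counted up to a definable equivalence relation $E(\bar z,\bar z')$ and with the real parameter $\bar w$. Third, ``$\phi$ eliminates $\exists^\infty$ with bound $n$'' is a first-order scheme $\{\forall\bar y\,(\exists^{\ge n}\bar x\,\phi\rightarrow\exists^{\ge m}\bar x\,\phi):m\ge n\}$, so it suffices to produce the bounds in $\F$ itself; they then hold in every model of $\mathcal{T}h(\F)$.

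So fix $\phi(\bar x,\bar y)$; by the first observation we are reduced to a single imaginary variable $x$ ranging over a sort $S_E$, and by the second to counting, up to a definable equivalence relation $E$ on a real tuple, the solution set of some $\mathcal{L}$-formula over $\F$ with real parameters. If $E$ is already one of the basic relations $E_1$, $E_{2,m}$, $E_{3,m}$, $E_{4,m,n}$ of Definition \ref{Imaginaries}, Theorems \ref{ConjugacySort}, \ref{CosetSort} and \ref{DoubleCosetSort} give the bound directly ($E_{3,m}$ being definably interpretable in $E_{2,m}$), and for the trivial $E$ it is Theorem \ref{RealSort}. For a general definable $E$ I would invoke Theorem \ref{Elim} to get the relation $R_E\subseteq\F^m\times\F^k\times S_1\times\cdots\times S_l$ with the $S_i$ basic: by clauses (ii) and (iii) the map $[\bar a]_E\mapsto R_E(\bar a,-)$ is an injection of $S_E$ into the subsets of $\F^k\times S_1\times\cdots\times S_l$ of cardinality at most some fixed $N$. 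Hence, if $X\subseteq S_E$ is finite, then, picking a real representative $\bar a_e$ of each $e\in X$, the set $Y:=\bigcup_{e\in X}R_E(\bar a_e,-)$ is finite and definable over the same parameters (Lemma \ref{ImagSorts}), and the classes in $X$ correspond to distinct subsets of $Y$ of size at most $N$, so $|X|\le(|Y|+1)^{N}$. Since $Y$ lives in a product of a real sort with basic sorts, a second application of the first observation reduces bounding $|Y|$ to the single-variable cases over the real and basic sorts, i.e.\ again to Theorems \ref{RealSort}, \ref{ConjugacySort}, \ref{CosetSort} and \ref{DoubleCosetSort}. Chaining the resulting estimates and multiplying over coordinates produces the desired $n_\phi$.

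The point to stress is that by this stage there is no geometry left to do: every genuinely hard input --- the graded Diophantine envelope (Theorem \ref{GradedEnvelope}), the bound on strictly solid families (Theorem \ref{BoundSolid}), and the core propositions of Section \ref{CoreMaterial} --- has already been packaged into Theorems \ref{RealSort}--\ref{DoubleCosetSort}, and what remains is the standard transfer of $\exists^\infty$-elimination along finite products and definable finite-to-one correspondences. The only place calling for care is the bookkeeping of uniformity: one must check that $\phi_i$, $\psi$, $Y$ and their projections are definable with parameters obtained from $\bar c$ alone (which is exactly Lemma \ref{ImagSorts}), and that the integer $N$ furnished by Theorem \ref{Elim}(ii) is independent of the $E$-class, so that the bound $(|Y|+1)^N$ is itself uniform. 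With these in hand, combining Theorems \ref{RealSort}, \ref{ConjugacySort}, \ref{CosetSort} and \ref{DoubleCosetSort} as above gives the theorem.
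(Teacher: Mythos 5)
Your proposal is correct and follows essentially the same route as the paper: both use Theorem \ref{Elim} together with Lemma \ref{ImagSorts} to transfer counting in an arbitrary imaginary sort down to the real and basic sorts, and then invoke Theorems \ref{RealSort}, \ref{ConjugacySort}, \ref{CosetSort} and \ref{DoubleCosetSort}. The only difference is presentational: you extract explicit uniform bounds (via the injection $[\bar a]_E\mapsto R_E(\bar a,-)$ and the estimate $|X|\le(|Y|+1)^N$), whereas the paper runs the same reduction as an argument by contradiction with an unbounded sequence of parameter tuples.
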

\begin{proof} 
Suppose, for the sake of contradiction, that there is a first order formula $\phi(x_E,\bar{y})$, where $x_E$ is 
a variable of the sort $S_E$ for some $\emptyset$-definable equivalence relation $E$ and $\bar{y}$ is a tuple of variables 
in the real sort, and a sequence of tuples $(\bar{c}_n)_{n<\omega}$ in $\Gamma$ such that $n<\abs{\phi(x_E,\bar{y})}<\infty$ 
for each $n<\omega$.

We consider the formula: 
$$\psi(x_1,\ldots,x_k,x_{E_1},\ldots,x_{E_l},\bar{y},\bar{a}):=\exists x_E(\phi(x_E,\bar{y})\land R_E(x_E,x_1,\ldots,x_k,x_{E_1},\ldots,x_{E_l},\bar{a}))$$ 

Where $R_E$ is the definable relation assigned to the equivalence relation $E$ by Theorem \ref{Elim}, $x_1,\ldots,x_k$ are variables 
from the real sort and $x_{E_j}$ are variables of the sort $S_{E_j}$ for some basic equivalence relations $E_j$ (see Definition \ref{Imaginaries}). 
Then, by Theorem \ref{Elim}, the sequence of tuples $(\bar{c}_n,\bar{a})_{n<\omega}$ in $\F$ witnesses that 
$n<\abs{\psi(x_1,\ldots,x_k,x_{E_1},\ldots,x_{E_l},\bar{c}_n,\bar{a})}<\infty$ for each $n<\omega$. 

Finally, by Lemma \ref{ImagSorts}, there exists a first order formula with all variables in the real sort  
$\theta(x_1,\ldots,x_k,\bar{z}_1,\ldots,\bar{z}_l,\bar{y},\bar{w})$ such that 
$$\F^{eq}\models \forall x_1,\ldots,x_k,\bar{z}_1,\ldots,\bar{z}_l,\bar{y},\bar{w}
(\psi(x_1,\ldots,x_k,f_{E_1}(\bar{z}_1),\ldots f_{E_l}(\bar{z}_l),\bar{y},\bar{w})\leftrightarrow$$
$$\theta(x_1,\ldots,x_k,\bar{z}_1,\ldots,\bar{z}_l,\bar{y},\bar{w}))$$. 

Thus, after refining, re-enumerating and possibly expanding the sequence (that we still denote by) $(\bar{c}_n,\bar{a})$, one of the following holds:
\begin{itemize}
 \item there exists a first order formula $\theta_1(z,\bar{y},\bar{w})$ such that 
 $n<\abs{\theta_1(z,\bar{c}_n,\bar{a})}<\infty$; 
 \item for some natural number $m$, there exists a first order formula $\theta_{2_m}(z_1,z_2,\bar{y},\bar{w})$ 
 such that $n<\abs{\theta_{2_m}(z_1,z_2,\bar{c}_n,\bar{a})}_{E_{2_m}}<\infty$; 
 \item for some couple of natural numbers $(m,k)$, there exists a first order formula $\theta_{4_{m,k}}(z_1,z_2,$ $z_3,\bar{y},\bar{w})$ 
 such that $n<\abs{\theta_{4_{m,k}}(z_1,z_2,z_3,\bar{c}_n,\bar{a})}_{E_{4_{m,k}}}<\infty$;
\end{itemize}
In each case a contradiction is reached.
\end{proof}

Together with the stability of the theory of non abelian free groups we get: 

\begin{corollary}
Let $\F$ be a non abelian free group. Then $\mathcal{T}h(\F)$ does not have the finite cover property.
\end{corollary}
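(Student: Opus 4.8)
The plan is to deduce the corollary from the main theorem (elimination of the $\exists^\infty$-quantifier in $\mathcal{T}h(\F)^{eq}$) together with stability of $\mathcal{T}h(\F)$, via the abstract equivalence recorded in Theorem \ref{nfcp}. First I would invoke Sela's theorem that $\mathcal{T}h(\F)$ is stable (stated in the excerpt as the specialization of \cite{SelaStability} to torsion-free hyperbolic groups, of which non abelian free groups are an instance). This is the hypothesis needed to apply Theorem \ref{nfcp}, which says that for a \emph{stable} first order theory $T$, $T$ does not have the finite cover property if and only if $T^{eq}$ eliminates the $\exists^\infty$-quantifier. Having already proved the right-hand side for $T=\mathcal{T}h(\F)$ in the preceding theorem, the corollary follows immediately by the ``$\Leftarrow$'' direction of Theorem \ref{nfcp}.

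So the only real content is a one-line invocation: by the main theorem $\mathcal{T}h(\F)^{eq}$ eliminates the $\exists^\infty$-quantifier, by Sela's theorem $\mathcal{T}h(\F)$ is stable, hence by Theorem \ref{nfcp} $\mathcal{T}h(\F)$ does not have the finite cover property. I would write essentially: \emph{``By the previous theorem $\mathcal{T}h(\F)^{eq}$ eliminates the $\exists^\infty$-quantifier, and by \cite{SelaStability} the theory $\mathcal{T}h(\F)$ is stable; the conclusion is then an immediate application of Theorem \ref{nfcp}.''}

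There is no real obstacle here: all the heavy lifting — the graded Diophantine envelope (Theorem \ref{GradedEnvelope}), the bound on strictly solid families (Theorem \ref{BoundSolid}), the analysis of images of elements of limit towers under test sequences (Theorems \ref{UniRealSort}, \ref{UniConjSort}, \ref{BoundLeftCosets}, \ref{BoundDoubleCosets}), and the reduction of arbitrary imaginary sorts to the four basic sorts (Theorems \ref{Elim} and Lemma \ref{ImagSorts}) — has been carried out in order to establish the $\exists^\infty$-elimination statement. The passage to nfcp is then purely formal, and the subtlety (that the two statements are equivalent \emph{only} under the stability assumption, and that instability alone already forces the finite cover property) has already been flagged in the introduction and isolated in Theorem \ref{nfcp}. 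If anything required care it would merely be making sure the hypotheses of Theorem \ref{nfcp} are literally met, i.e.\ that one is working in the right enrichment $\mathcal{M}^{eq}$ and that ``stable'' is applied to the theory and not just to a model — both of which are automatic here.
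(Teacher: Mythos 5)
Your proposal is correct and matches the paper's argument exactly: the paper derives the corollary from the elimination of the $\exists^{\infty}$-quantifier in $\mathcal{T}h(\F)^{eq}$, Sela's stability theorem, and the equivalence recorded in Theorem \ref{nfcp}. Nothing further is needed.
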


\bibliography{biblio}
\end{document}